\documentclass{amsart}

\usepackage{amssymb,mathrsfs}
\usepackage[T1]{fontenc}
\usepackage{enumitem}
\usepackage{tikz}
\usetikzlibrary{cd}
\usepackage{float}
\usepackage[pdfusetitle,colorlinks=true,linkcolor=magenta,citecolor=cyan,urlcolor=magenta]{hyperref}

\numberwithin{equation}{section}

\newtheorem{theorem}{Theorem}[section]
\newtheorem{proposition}[theorem]{Proposition}
\newtheorem{lemma}[theorem]{Lemma}
\newtheorem{corollary}[theorem]{Corollary}

\theoremstyle{remark}
\newtheorem{remark}[theorem]{Remark}

\DeclareMathOperator{\Isom}{Isom}
\DeclareMathOperator{\Spf}{Spf}
\DeclareMathOperator{\lcm}{lcm}

\newcommand{\Q}{\mathbb Q}
\newcommand{\Qbar}{\overline{\mathbb Q}}
\newcommand{\Z}{\mathbb Z}
\newcommand{\C}{\mathbb C}
\newcommand{\F}{\mathbb F}

\newcommand{\Hup}{\mathfrak H}
\newcommand{\Ga}{\mathbb G_{\mathrm a}}
\newcommand{\Gm}{\mathbb G_{\mathrm m}}
\newcommand{\SL}{\mathrm{SL}_2(\mathbb Z)}
\newcommand{\dd}{\,\mathrm d}
\newcommand{\Xmu}{X_\mu(N)}
\newcommand{\Gpm}{\Gamma_1^{\pm}(N)}
\newcommand{\cE}{\mathscr E}

\newcommand{\diam}[1]{\langle #1\rangle}

\newcommand{\BDC}{\hyperlink{bdc}{\textup{(BDC)}}}
\newcommand{\sm}[2]{\bigl(\begin{smallmatrix}#1\\#2\end{smallmatrix}\bigr)}

\theoremstyle{definition}
\newtheorem{definition}[theorem]{Definition}

\newtheorem{construction}[theorem]{Construction}
\newcommand{\Ymu}{Y_\mu(N)}
\DeclareMathOperator{\Incl}{Incl}
\DeclareMathOperator{\Ig}{Ig}

\title{On the Manin Constant}
\author{Zhuoni Chi}
\email{znchi@zju.edu.cn}
\author{Shurui Liu}
\email{srliu@stanford.edu}
\author{Ping Nie}
\email{ping.nie@pku.edu.cn}
\hypersetup{pdfauthor={Zhuoni Chi, Shurui Liu, and Ping Nie}}
\date{}
\subjclass[2020]{Primary 11G05; Secondary 11F11, 11G18, 14G35}
\keywords{Manin constant, Manin--Stevens constant, optimal quotient, modular parametrization, unbounded denominators, congruence subgroups, Igusa curves, N\'eron models}

\begin{document}

\begin{abstract}
We prove the Manin constant conjecture for every optimal elliptic curve over
$\Q$ and Stevens' Conjectures~I and~II. Our method adapts Hongbo Yin's
congruence argument for cubic roots of modular functions to lifts of modular
parametrizations through isogenies, using the unbounded denominators theorem
of Calegari, Dimitrov and Tang.
\end{abstract}

\maketitle

\tableofcontents

% !TEX root = ../rewritten.tex
\section{Introduction}\label{sec:intro}

\subsection{Main results}\label{subsec:results}

Let $E$ be an elliptic curve over $\Q$ of conductor $N$, and let
$f=\sum_{n\geq1}a_nq^n$ be the newform of weight two and level $N$ attached to
$E$. Here $q=e^{2\pi iz}$, with $z$ in the upper half-plane $\Hup$. We write
\[
  \omega_f=2\pi if(z)\dd z=f(q)\frac{\dd q}{q}
\]
for the corresponding differential on the modular curve $X_0(N)$. A
surjection $\pi_0:J_0(N)\twoheadrightarrow E$ is called \emph{optimal} if its
kernel is connected. Each isogeny class contains exactly one curve admitting
an optimal quotient, and $\pi_0$ is then unique up to sign. Let
\[
  \varphi_0:X_0(N)\longrightarrow E,\qquad \varphi_0(P)=\pi_0([P]-[\infty])
\]
be the corresponding modular parametrization, and let $\omega_E$ be a N\'eron
differential of $E$. By multiplicity one there is a constant $c_E\in\Q^\times$,
the \emph{Manin constant}, with
\[
  \varphi_0^*\omega_E=c_E\,\omega_f .
\]
Manin conjectured that $c_E=\pm1$ \cite[Section~10.3]{Manin1971}; see
\cite[Conj.~1.1]{Cesnavicius2018} for the formulation used here.

\begin{theorem}[Manin constant conjecture]\label{thm:manin}
Let $E$ be an elliptic curve over $\Q$ of conductor $N$, and let
$\pi_0:J_0(N)\twoheadrightarrow E$ be an optimal quotient. Then $c_E=\pm1$.
\end{theorem}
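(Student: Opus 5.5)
We already know from prior work that $c_E\in\Z$ (Edixhoven), that $p\nmid c_E$ for $p\nmid 2N$, and that $p^2\nmid 4N$ implies $p\nmid c_E$ (Mazur, Abbes--Ullmo, Raynaud). More generally, Česnavičius showed $c_E \mid$ a small explicit quantity, with the remaining difficulty concentrated at primes $p$ with $p^2\mid N$, essentially $p\in\{2,3\}$ and certain additive-reduction cases. So it suffices to fix such a prime $p$ and show $p\nmid c_E$.

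Suppose $p\mid c_E$. By Raynaud/Néron-model functoriality, the pullback of the Néron differential being divisible by $p$ means the following. There is an isogeny $\psi:E'\to E$ of degree divisible by $p$, for instance the quotient by a subgroup of order $p$ detected via the kernel of $\varphi_0^*$ on Néron differentials mod $p$. The parametrization $\varphi_0$ then lifts, over a suitable cover, to a map $\varphi':X\to E'$ with $\psi\circ\varphi'=\varphi_0$ and $\varphi'^*\omega_{E'}=(c_E/p)\,\omega_f$ up to a unit. Concretely, I would use the Weierstrass coordinates $x,y$ of $E$ pulled back by $\varphi_0$: these are modular functions on $X_0(N)$ whose $q$-expansions lie in $\Z[[q]][q^{-1}]$ up to normalization. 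The lift through $\psi$ expresses the coordinates $x',y'$ of $E'$ as algebraic functions of $\varphi_0^*x,\varphi_0^*y$, for instance roots of a polynomial of degree $p$ as in Vélu's formulas. This is the analogue of Yin's cube roots of modular functions.

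The key step is to show that these lifted functions $x'\circ\varphi'$ are modular functions for some finite-index subgroup $\Gamma\subset\SL$, because they are pulled back along a finite étale-over-the-generic-point cover. We then check two properties of their $q$-expansions. First, they have $p$-integral coefficients, using the integrality of $\omega_f$ and the relation $\varphi'^*\omega_{E'}=(c_E/p)\omega_f$, via formal-group logarithms. Second, they genuinely depend on $q^{1/M}$ or fail to be congruence, via a Yin-style congruence computation mod $p$: if $p\mid c_E$, then $\varphi_0^*x$ is congruent mod $p$ to a $p$-th power, or a Frobenius twist, which forces the lift to be defined over a noncongruence cover. By the unbounded denominators theorem of Calegari--Dimitrov--Tang, a modular function for a finite-index subgroup with bounded denominators is a congruence modular form. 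Hence $\Gamma$ may be taken congruence, so $\varphi'$ factors through some $X_1(M)$ or $X_\mu(N)$. Optimality, meaning connectedness of $\ker\pi_0$ together with the comparison between $\Gamma_0$ and $\Gamma_1$ optimal quotients (Stevens), then forbids a lift of $\varphi_0$ through a nontrivial isogeny of degree $p$. This gives a contradiction.

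The main obstacle is the integrality step: proving that the lifted $q$-expansions have bounded, indeed $p$-integral, denominators exactly when $p\mid c_E$. This needs control of the Néron model of $E$ at bad additive reduction, and of the cusp at which we expand, since for $p^2\mid N$ the cusp $\infty$ may not be the right one and Atkin--Lehner or Igusa-curve components of $X_0(N)_{\F_p}$ enter. I would first try to work on the Igusa component containing $\infty$, where the expansion in $q$ is étale-local, and reduce integrality there to formal-group integrality of $\psi$.
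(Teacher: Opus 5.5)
There is a genuine gap at the step meant to produce the contradiction. You claim optimality ``forbids a lift of $\varphi_0$ through a nontrivial isogeny of degree $p$.'' Optimality forbids only a \emph{$\Gamma_0(N)$-invariant} lift. Connectedness of $\ker\pi_0$ makes $\varphi_{0*}$ surjective on $H_1$, so the monodromy of the lift through any isogeny $\nu$ is all of $\ker\nu$. Once CDT lets the lift descend to $X_\mu(N)$ or $X_1(N)$, optimality of $\pi_0$ says nothing, and neither does Stevens' comparison of the $\Gamma_0$- and $\Gamma_1$-optimal curves. For $N=11$, the pullback of $\varphi_0$ to $X_1(11)$ is itself the $5$-isogeny $X_1(11)\to X_0(11)$. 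Several other steps also fail:
\begin{itemize}
\item A $\Q$-rational $p$-isogeny onto $E$ need not exist.
\item Even with the paper's full machinery, a cyclic kernel would only give that $\ker\nu$ consists of rational points of $E'^0(\Q_p)$, which is not contradictory.
\item Your ``noncongruence via a mod-$p$ $p$-th power'' step is unfounded and clashes with your own use of CDT one line later.
\item ``Factors through some $X_1(M)$'' is not enough; you need level exactly $N$.
\end{itemize}

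The missing ingredients, as the paper uses them, are these.
\begin{itemize}
\item Lift through $[c]$ itself, so that the monodromy group is the full $E[c]$.
\item Control the level. The cover $X_\Gamma\times_{E,[c]}E\to X_\Gamma$ is \'etale also at the cusps, so parabolic elements lie in $G_\psi$ and cusp widths are unchanged. Wohlfahrt then gives $\Gamma(N)\subseteq G_\psi$, hence $\Gamma_1^{\pm}(N)\subseteq G_\psi$ since $T\in G_\psi$.
\item Use Galois descent, the lift being normalized at the rational cusp $\infty$. This gives $\psi_\mu:X_\mu(N)\to E$ over $\Q$ with $\psi_\mu\circ\langle\delta\rangle=\psi_\mu+\tau_\delta$ and $\tau_\delta=\psi_\mu(\langle\delta\rangle\infty)\in E(\Q)$.
\item Optimality then gives $E[c]\subseteq E(\Q)$, and the Weil pairing gives $c\mid 2$.
\item The case $c=2$ needs a separate local argument. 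Take a smooth model, over an unramified extension of $\Q_p$, of a cover of $Y_\mu(N)$ with lifts of the diamond operators and a connected Igusa special fibre. This forces $\tau_\delta\in E^0(\Q_p)$, whereas $\#E^0(\Q_p)[p]\le p<p^2$ under additive reduction.
\end{itemize}
Both contradictions use the full $p$-torsion. Finally, you have misplaced the difficulty: integrality is the easy step. One has $t_\psi=L_E^{-1}(\ell_f)$, and at $p\mid c$, $a_n=0$ for $p\mid n$. Also $L_E^{\pm1}$ is $p$-integral because $[p]_{\widehat E}\equiv 0\bmod p$. No Atkin--Lehner or Igusa components enter at the cusp $\infty$.
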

We will prove this conjecture in Section~\ref{sec:deduction} under some assumptions and remove the assumptions in Sections~\ref{sec:integrality} and~\ref{sec:lifting}. The case $|c_E|=2$ will be excluded in Section~\ref{sec:c-two} because the techniques there are heavier.

Stevens studied the analogous question for $X_1(N)$
\cite{Stevens1989}. Here $X_1(N)$ denotes Shimura's canonical model over $\Q$,
in which the cusp $0$ is a rational point \cite[Section~1]{Stevens1989}. Let
$\mathcal C$ be an isogeny class of elliptic curves over $\Q$ of conductor $N$.
There is a unique $E_1\in\mathcal C$ admitting a parametrization
$\varphi_1:X_1(N)\to E_1$ which sends the cusp $0$ to $0$ and the
Jacobian map
$\pi_1:J_1(N)\twoheadrightarrow E_1$ has connected kernel. By \cite[Prop.~1.4]{Stevens1989}, $\varphi_1$ is
unique up to sign. Its Manin constant $c_1$ is
defined by
\[
  \varphi_1^*\omega_{E_1}=c_1\,\omega_f^{\mathrm{St}},
\]
where $\omega_{E_1}$ is a N\'eron differential and $\omega_f^{\mathrm{St}}$ is
the differential attached to $f$ in \cite[Def.~1.1]{Stevens1989} (see
Section~\ref{subsec:proof-stevens}). Stevens proved that $c_1\in\Z$
\cite[Thm.~1.6]{Stevens1989}. In each isogeny class $\mathcal C$ there is a unique curve $E_{\min}$
such that, for every $E\in\mathcal C$, there is an isogeny
$E_{\min}\to E$ over $\Q$ which extends to an \'etale morphism of
N\'eron models over $\Z_p$ for every prime $p$
\cite[Thm.~2.3]{Stevens1989}. He also conjectured the following:

\begin{theorem}[Stevens' Conjectures I and II]\label{thm:stevens}
For every isogeny class of elliptic curves over $\Q$ we have $c_1=\pm1$ and
$E_1\simeq E_{\min}$.
\end{theorem}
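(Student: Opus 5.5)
The strategy is to reduce Stevens' conjectures to an integrality-and-lifting statement of the same kind that drives the proof of Theorem~\ref{thm:manin}. The key observation is that $c_1\in\Z$ by \cite[Thm.~1.6]{Stevens1989}. So to show $c_1=\pm1$ it suffices to rule out every prime $p\mid c_1$. Likewise, $E_1\simeq E_{\min}$ amounts to showing that the canonical isogeny $\psi:E_{\min}\to E_1$, which is \'etale on N\'eron models at every prime, has degree one. I will work prime by prime and treat both statements together, since a nontrivial $\psi$ or a prime $p\mid c_1$ should each produce the same kind of object.

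First I will normalize. Write $\varphi_1^*\omega_{E_1}=c_1\omega_f^{\mathrm{St}}$, and let $\psi^*\omega_{E_1}=u\,\omega_{E_{\min}}$; the \'etaleness of $\psi$ at every $p$ forces $u=\pm1$. Now suppose either $p\mid c_1$, or $p\mid\deg\psi$ with $\psi$ chosen so that it factors through a cyclic $p$-isogeny $E'\to E_1$. In either case I aim to build a $p$-isogeny $\lambda:E'\to E_1$ together with a lift of the parametrization. This is the lifting step: find $\varphi':X_1(N)\to E'$ with $\lambda\circ\varphi'=\varphi_1$, defined over $\Q$ or over a small abelian extension. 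The pullback $\varphi'^*\omega_{E'}$ must then equal $(c_1/p)\,\omega_f^{\mathrm{St}}$ or $c_1\,\omega_f^{\mathrm{St}}$, up to units. In the first case we contradict Stevens' integrality of the Manin constant for the pair $(X_1(N),E')$, or its analogue for non-optimal quotients. In the second case, where $\varphi'$ exists with connected-kernel Jacobian map, we contradict the uniqueness of $E_1$ from \cite[Prop.~1.4]{Stevens1989}.

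The lift comes from the section results the paper will establish, which adapt Yin's congruence argument (the statement ``proved under assumptions in Section~\ref{sec:deduction}'' and the lifting theorems of Sections~\ref{sec:integrality} and~\ref{sec:lifting}). Concretely, I pull back the kernel of the dual isogeny $\hat\lambda:E_1\to E'$ along $\varphi_1$. This gives a $\mu_p$- or $\Z/p$-torsor over $X_1(N)$, hence a modular function $h$ with $h^p$ or $h^p-h$ equal to a function on $X_1(N)$. If $p\mid c_1$, or if the isogeny is \'etale in the appropriate direction, then the $q$-expansion of $h$ at the cusp $0$ has $p$-integral coefficients. The unbounded denominators theorem of Calegari--Dimitrov--Tang then forces $h$ to be modular for a congruence subgroup. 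Comparing levels via conductor considerations shows that the torsor is trivial over $X_1(N)$ itself, so $\varphi_1$ lifts.

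The main obstacle will be this integrality-to-congruence step. Specifically, I need to show that the $q$-expansion of the root $h$ is bounded-denominator at all cusps, not just at $0$. This requires control of the N\'eron model of $E_1$ over $\Z_p$ and of the Stevens model of $X_1(N)$ at bad primes, especially $p=2,3$ and $p\mid N$. For $p=2$ I expect to need the heavier argument of Section~\ref{sec:c-two}. Once Theorem~\ref{thm:manin} is available there is a shortcut to try first: the optimal curve $E_0$ and $E_1$ are linked by an isogeny through $X_1(N)\to X_0(N)$. Comparing $c_E=\pm1$ with $c_1$ via the degree of that isogeny and its effect on N\'eron differentials may already force $c_1=\pm1$. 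The remaining statement $E_1\simeq E_{\min}$ would then follow from the lifting argument alone.
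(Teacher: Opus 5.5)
Your argument for ruling out a prime $p\mid c_1$ does not produce a contradiction. Suppose $\varphi_1=\lambda\circ\varphi'$ with $\deg\lambda=p$ and $\lambda^*\omega_{E_1}=d\,\omega_{E'}$. Then $d\in\{\pm1,\pm p\}$ and $\varphi'^*\omega_{E'}=(c_1/d)\,\omega_f^{\mathrm{St}}$. When $p\mid c_1$, both $c_1$ and $c_1/p$ are integers, so Stevens' integrality is not violated.

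You also do not say where a $p$-isogeny into $E_1$ with a $p$-integral lift comes from. At $p\mid c_1$, integrality of the lift has to come from additive reduction at $p$ ($a_p=0$ and $L^{-1}$ integral, Lemma~\ref{lem:additive}). That needs $p^2\mid N$, which you never establish.

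The contradiction a nontrivial factorization $\varphi_1=\lambda\circ g$ actually gives is with the connectedness of $\ker\pi_1$: $\pi_1=\lambda\circ g_*$ forces $\lambda$ to be an isomorphism (Lemma~\ref{lem:jacobian}). The Manin constant of $g$ plays no role. The paper gets $c_1=\pm1$ exactly from the comparison you list only as a ``shortcut to try''. First, $c_1\mid c_E$ by \cite[Lemma~2.12]{Cesnavicius2018}: the homomorphism $J_1(N)\to E$ induced by $\varphi_0\circ\varpi$ factors as $u\circ\pi_1$ with $u^*\omega_E\in\Z\,\omega_{E_1}$. Second, $c_E=\pm1$ by Theorem~\ref{thm:manin}. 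This has to be the argument, not a fallback.

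For $E_1\simeq E_{\min}$ your outline is the paper's: lift $\varphi_1$ through the \'etale isogeny $\alpha:E_{\min}\to E_1$, prove integrality, apply Theorem~\ref{thm:cdt}, compare levels, and contradict connectedness of $\ker\pi_1$. But the mechanism you describe would not go through. The relevant torsor is the fibre product $\Xmu\times_{\varphi_1,E_1}E'$, not a pullback of the kernel of the dual isogeny. The expression ``$h^p-h$'' has no meaning in characteristic $0$, and nothing makes a Kummer generator $h$ have integral $q$-expansion.

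What works is to apply Calegari--Dimitrov--Tang to $\Delta^sP(j)\,x\circ\psi$ and $\Delta^sP(j)\,y\circ\psi$ for the normalized lift $\psi$. Integrality comes from $t_\psi=\widehat\alpha^{-1}(t_{\varphi_1})$, where $t_{\varphi_1}\in q\,\Z[[q]]$ by the N\'eron mapping property on the smooth model of $\Xmu$ along the rational cusp. After normalizing $\alpha^*\omega_{E_1}=\omega_{E_{\min}}$, one also has $\widehat\alpha\in w+w^2\Z[[w]]$.

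The theorem needs bounded denominators at all primes at once, so $p$-integrality alone is not enough. This requires $c_1=\pm1$ first: otherwise $t_\psi$ has leading coefficient $\pm c_1$ and $x\circ\psi$ has unbounded denominators at primes dividing $c_1$. So the two assertions cannot be handled together prime by prime. On the other hand, the obstacle you single out is not one, since the theorem only needs the expansion at one cusp.

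Finally, ``comparing levels via conductor considerations'' has to be a specific argument. Since $\Xmu\times_{E_1}E'\to\Xmu$ is unramified, $\rho$ vanishes on parabolic elements. Hence $G_\psi$ has the same cusp widths as $\Gpm$, so it has level $N$. Wohlfahrt's theorem \cite[Thm.~2]{Wohlfahrt1964} then gives $\pm\Gamma(N)\subseteq G_\psi$, and together with $T\in G_\psi$ this yields $\Gpm\subseteq G_\psi$.
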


The two assertions are Stevens' Conjecture~I$'$(a) and Conjecture~II,
respectively. Together they imply his Conjecture~I
\cite[(1.7), Thms.~1.8 and~2.3, (2.4)]{Stevens1989}.
The first assertion follows from Theorem~\ref{thm:manin} by
\cite[Lemma~2.12]{Cesnavicius2018}. The second will be deduced from
Theorem~\ref{thm:manin} and Theorem~\ref{thm:lifting} below in
Section~\ref{subsec:proof-stevens}.

\subsection{The normalized lift}\label{subsec:normalized-lift}
The central object of this article is the lift of a modular parametrization
through an isogeny. Let $\nu:E'\to E$ be an isogeny and let $\varphi$ be a
morphism from a modular curve to $E$. Since $\Hup$ is simply connected,
$\varphi$ lifts through $\nu$ to a holomorphic map $\Hup\to E'(\C)$. We first fix the
modular curves on which $\varphi$ is defined.

For a subgroup $G\subseteq\SL$ we write $\pm G=\langle G,-I\rangle$, and we
put $\Gpm=\pm\Gamma_1(N)$. Let $\Xmu$ be the compactified moduli space over
$\Q$ of pairs $(A,\iota)$, where $A$ is an elliptic curve and $\iota:\mu_N\hookrightarrow A$ is a closed
immersion \cite[Section~4.9 and 8.6.3]{KM1985}. Let $\infty$ be the cusp of
$\Xmu$ corresponding to the Tate curve $\mathrm{Tate}(q)$ over $\Q((q))$ with
$\mu_N\subseteq\Gm$. It is a rational point, and the completion of $\Xmu$
along it is $\Spf\Q[[q]]$ \cite[Thm.~8.11.10]{KM1985}. In terms of
generalized elliptic curves, $\infty$ is given by the standard N\'eron
$1$-gon with $\mu_N$ embedded in its smooth locus $\Gm$
\cite[proof of Lemma~6.1.6]{CES}. Analytically we use the identification
\[
  \Gpm\backslash\Hup^*\simeq\Xmu(\C),\qquad
  z\longmapsto\bigl(\C/(\Z z+\Z),\ e^{2\pi i/N}\mapsto1/N\bigr),
\]
under which $\infty$ corresponds to $\infty$ and $q=e^{2\pi iz}$.

For $\delta\in(\Z/N\Z)^\times$ the diamond operator
$\diam\delta:(A,\iota)\mapsto(A,\iota\circ[\delta])$ is an automorphism of
$\Xmu$ over $\Q$, and $\diam{-1}$ is the identity. A matrix
$\gamma=\sm{a&b}{Nr&d}\in\Gamma_0(N)$ acts on $\Xmu(\C)$ as $\diam d$: the map
$w\mapsto(Nrz+d)w$ is an isomorphism $\C/(\Z\gamma z+\Z)\to\C/(\Z z+\Z)$
sending $1/N$ to $d/N$ modulo $\Z z+\Z$.

Let $\Gamma$ be a group with $\Gpm\subseteq\Gamma\subseteq\Gamma_0(N)$, let
$\Delta_\Gamma$ be the image of $\Gamma$ under the homomorphism
$\Gamma_0(N)\to(\Z/N\Z)^\times/\{\pm1\}$, $\sm{a&b}{Nr&d}\mapsto\pm d$, and
put
\[
  X_\Gamma=\Xmu/\Delta_\Gamma,\qquad
  \varpi_\Gamma:\Xmu\longrightarrow X_\Gamma .
\]
Then $X_\Gamma(\C)=\Gamma\backslash\Hup^*$, and $X_\Gamma$ has the rational
cusp $\infty$ with parameter $q$. The cusps $\diam\delta\infty$ of $\Xmu$ are
rational. For $\Gamma=\Gamma_0(N)$ this is the usual model of $X_0(N)$, and we
write $\varpi=\varpi_{\Gamma_0(N)}:(A,\iota)\mapsto(A,\iota(\mu_N))$. We also
write $\omega_f$ for the pullbacks of $\omega_f$ to $X_\Gamma$ and to $\Xmu$.
Their expansion at $\infty$ is again $f(q)\dd q/q$.

Let $\nu:E'\to E$ be an isogeny of elliptic curves over $\Q$, and let
$\varphi:X_\Gamma\to E$ be a non-constant morphism over $\Q$ with
$\varphi(\infty)=0$.

\begin{definition}\label{def:normalized-lift}
The \emph{normalized lift} of $\varphi$ through $\nu$ is the holomorphic map
$\psi:\Hup\to E'(\C)$ with
\[
  \nu\circ\psi=\varphi\quad\text{on }\Hup,
  \qquad \psi(z)\to0\ \text{as}\ z\to i\infty.
\]
In other words, $\psi$ makes the following diagram commute:
\[
\begin{tikzcd}
  \Hup \arrow[r, dashed, "\psi"] \arrow[d] & E'(\C) \arrow[d, "\nu"] \\
  X_\Gamma(\C) \arrow[r, "\varphi"] & E(\C)
\end{tikzcd}
\]
\end{definition}

\begin{lemma}\label{lem:lift}
The normalized lift exists and is unique. It satisfies $\psi(z+1)=\psi(z)$,
and as a function of $q=e^{2\pi iz}$ it extends holomorphically to $q=0$,
with value $0$ there.
\end{lemma}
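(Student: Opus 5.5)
We argue through covering space theory, using the fact that $\nu:E'(\C)\to E(\C)$ is a finite unramified covering of complex tori. Let $\Phi:\Hup\to E(\C)$ be the composite of $\Hup\to X_\Gamma(\C)$ with $\varphi$. Since $\Hup$ is simply connected and locally path connected, $\Phi$ lifts through $\nu$ to a continuous map $\Hup\to E'(\C)$. Because $\nu$ is a local biholomorphism, this lift is holomorphic. Two lifts differ by a continuous map $\Hup\to\ker\nu$; since $\ker\nu$ is finite and $\Hup$ is connected, that map is constant. So the lifts are exactly $\psi_0+t$ with $t\in\ker\nu$.

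To fix the normalization, we work near the cusp. The map $z\mapsto q=e^{2\pi iz}$ identifies $\Hup$, modulo $z\mapsto z+1$, with the punctured disc $D^*=\{0<|q|<1\}$. Since $\sm{1&1}{0&1}\in\Gpm\subseteq\Gamma$, the map $\Phi$ factors through a holomorphic map $\Phi^\flat:D^*\to E(\C)$. This map extends holomorphically to $q=0$ with value $\varphi(\infty)=0$, because $q$ is a local parameter of $X_\Gamma$ at $\infty$ and $\varphi(\infty)=0$.

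Pick a small disc $U\ni0$ in $E(\C)$ over which $\nu$ is trivial, and let $U'$ be the sheet containing $0\in E'(\C)$, with inverse $s:U\to U'$, $s(0)=0$. For $|q|<\varepsilon$ we have $\Phi^\flat(q)\in U$, so $g(q)=s(\Phi^\flat(q))$ is holomorphic on the full disc $\{|q|<\varepsilon\}$ with $g(0)=0$. The function $z\mapsto g(e^{2\pi iz})$ on the connected region $\{\operatorname{Im}z>C\}$ is a lift of $\Phi$ there. By the uniqueness argument above, applied on this connected set, it equals $\psi_0+t$ for some $t\in\ker\nu$. Set $\psi=\psi_0+t$. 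Then $\psi(z)=g(e^{2\pi iz})\to0$ as $z\to i\infty$, which proves existence.

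For uniqueness, any other lift is $\psi+t'$ with $t'\in\ker\nu$, and its limit at $i\infty$ is $t'$. So the normalization forces $t'=0$.

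For periodicity, $z\mapsto\psi(z+1)$ is again a lift of $\Phi$, because $\Phi(z+1)=\Phi(z)$. Hence $\psi(z+1)=\psi(z)+t''$ for some $t''\in\ker\nu$. Letting $z\to i\infty$ and using $\psi(z+1)\to0$ gives $t''=0$. So $\psi$ descends to a holomorphic map on $D^*$. On $\{|q|<\varepsilon\}\setminus\{0\}$ this map coincides with $g$, so it extends holomorphically to $q=0$ with value $g(0)=0$.

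The only delicate point is making sure the normalization is actually attainable, i.e.\ that the limit as $z\to i\infty$ exists for some lift. This is handled by the local-section argument near the cusp, which uses that $\varphi$ is holomorphic at $\infty$ in the parameter $q$ together with the triviality of the covering $\nu$ over a small neighbourhood of $0$.
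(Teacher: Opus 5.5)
Your proof is correct and follows essentially the same route as the paper. You lift through $\nu$ using simple connectivity, get uniqueness up to $\ker\nu$ by connectedness, normalize using a disc around $0$ in $E(\C)$ over which $\nu$ is trivial, and get periodicity because the constant $\psi(z+1)-\psi(z)$ must vanish at $i\infty$. The only cosmetic difference is that you build the normalized branch explicitly as $s\circ\Phi^\flat$, which gives the holomorphic extension to $q=0$ directly, whereas the paper subtracts the kernel point lying in the sheet that contains the image of a half-plane and then removes the singularity at $q=0$.
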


\begin{proof}
The map $\nu:E'(\C)\to E(\C)$ is a finite covering and $\Hup$ is simply
connected, so $\varphi$ lifts to a holomorphic map $\Hup\to E'(\C)$. If
$\psi_1$ and $\psi_2$ are
two lifts, then $\psi_1-\psi_2$ is a continuous map from the connected space
$\Hup$ to the finite set $\ker\nu$, hence constant.

Choose a disc $D\subseteq E(\C)$ around $0$ over which $\nu$ is a trivial
covering, and $y_0>0$ with $\varphi(z)\in D$ for $\operatorname{Im}z>y_0$. Let
$\psi_1$ be a lift. Since the
half-plane $\{\operatorname{Im}z>y_0\}$ is connected, $\psi_1$ maps it into a
single component of $\nu^{-1}(D)$. This component maps homeomorphically onto
$D$ and contains exactly one point $\kappa\in\ker\nu$, so $\psi_1(z)\to\kappa$
as $z\to i\infty$. Then $\psi=\psi_1-\kappa$ is a lift with $\psi(z)\to0$. If
$\psi'$ is another such lift, the constant $\psi'-\psi\in\ker\nu$ equals the
limit of $\psi'(z)-\psi(z)$ as $z\to i\infty$, which is $0$. Hence the
normalized lift exists and is unique.

The function $z\mapsto\psi(z+1)-\psi(z)$ is continuous with values in
$\ker\nu$, hence constant, and its limit is zero as $z\to i\infty$. Thus
$\psi(z+1)=\psi(z)$, and $\psi$ is a holomorphic function of $q$ on a
punctured disc which tends to $0$ as $q\to0$. It therefore extends across
$q=0$.
\end{proof}

We will study the geometry and arithmetic of this analytic map $\psi$. If
$x,y$ are the coordinate functions of a global minimal Weierstrass equation
for $E'$, then $x(\psi(q))$ and $y(\psi(q))$ lie in $\Q((q))$
(this is an easy fact and we will prove it in Lemma~\ref{lem:lift-expansion}).

\begin{lemma}\label{lem:rho}
For $\gamma\in\Gamma$ the function $z\mapsto\psi(\gamma z)-\psi(z)$ is a
constant $\rho(\gamma)\in\ker\nu$. The map $\rho:\Gamma\to\ker\nu$ is a
homomorphism, and its kernel
\[
  G_\psi=\ker\rho=\{\gamma\in\Gamma:\psi\circ\gamma=\psi\}
\]
is a normal subgroup of finite index in $\Gamma$.
\end{lemma}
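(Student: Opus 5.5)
The plan is to argue as in the proof of Lemma~\ref{lem:lift}, replacing the translation $z\mapsto z+1$ by an arbitrary $\gamma\in\Gamma$. Fix $\gamma\in\Gamma$. Since $\varphi$ is defined on $X_\Gamma(\C)=\Gamma\backslash\Hup^*$, we have $\varphi(\gamma z)=\varphi(z)$ for all $z\in\Hup$. Hence
\[
  \nu\bigl(\psi(\gamma z)-\psi(z)\bigr)=\varphi(\gamma z)-\varphi(z)=0,
\]
because $\nu$ is a homomorphism. So $z\mapsto\psi(\gamma z)-\psi(z)$ is a continuous map from the connected space $\Hup$ to the finite discrete set $\ker\nu$, and it is therefore constant. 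Call this constant $\rho(\gamma)$.

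For the homomorphism property, take $\gamma_1,\gamma_2\in\Gamma$ and telescope:
\[
  \psi(\gamma_1\gamma_2z)-\psi(z)=\bigl(\psi(\gamma_1(\gamma_2z))-\psi(\gamma_2z)\bigr)+\bigl(\psi(\gamma_2z)-\psi(z)\bigr)=\rho(\gamma_1)+\rho(\gamma_2).
\]
The first bracket equals $\rho(\gamma_1)$ because the defining identity for $\rho(\gamma_1)$ holds at every point of $\Hup$, in particular at $\gamma_2z$. The target $\ker\nu$ is abelian, so the order of the summands does not matter. One small point: $-I$ acts trivially on $\Hup$, so $\rho(-I)=0$. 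This means $\rho$ is well defined on $\Gamma$ as a matrix group, and no separate check is needed for the image in $\mathrm{PSL}_2$.

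The kernel of a homomorphism is normal. Its index is at most $|\ker\nu|=\deg\nu<\infty$, since $\Gamma/\ker\rho$ injects into $\ker\nu$. Finally, $\rho(\gamma)=0$ means exactly that $\psi\circ\gamma=\psi$, which gives the stated description of $G_\psi$.

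No step is a genuine obstacle. The only point needing care is the $\Gamma$-invariance of $\varphi$ on $\Hup$, and that is immediate from the identification $X_\Gamma(\C)=\Gamma\backslash\Hup^*$ stated above.
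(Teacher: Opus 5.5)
Your proposal is correct and follows the paper's proof essentially verbatim: $\Gamma$-invariance of $\varphi$ makes $\psi\circ\gamma$ another lift, so the difference is a continuous map to the finite group $\ker\nu$ and hence constant. The homomorphism property then follows from the same telescoping identity, and normality and finite index of $G_\psi$ follow from finiteness of $\ker\nu$.
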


\begin{proof}
Since $\varphi$ is $\Gamma$-invariant, $\psi\circ\gamma$ is also a lift of
$\varphi$, so the difference is a continuous function with values in the
finite group $\ker\nu$, hence constant. For $\gamma,\gamma'\in\Gamma$,
\[
  \rho(\gamma\gamma')
  =\bigl(\psi(\gamma\gamma'z)-\psi(\gamma'z)\bigr)
   +\bigl(\psi(\gamma'z)-\psi(z)\bigr)
  =\rho(\gamma)+\rho(\gamma').
\]
The last assertion follows because $\ker\nu$ is finite.
\end{proof}

The most important result of this article is the following lifting theorem, which is proved in Section~\ref{sec:lifting}:
\begin{theorem}\label{thm:lifting}
Suppose that the \emph{bounded denominators condition}
\hypertarget{bdc}{\textup{(BDC)}} holds for
$\psi$: there is an integer $k\geq1$ with
\[
  k\,x(\psi(q)),\ k\,y(\psi(q))\in\Z((q)).
\]
Then the following hold.
\begin{enumerate}[label=\textup{(\roman*)}]
\item\label{lift:congruence} \textup{\textbf{(Modularity)}} $G_\psi$ is a congruence subgroup and is normal
in $\Gamma$. The compact modular curve $X_{G_\psi}(\C)$ is the connected
component of $X_\Gamma(\C)\times_{\varphi,E,\nu}E'(\C)$ through $(\infty,0)$.
\item\label{lift:modular} \textup{\textbf{(Level)}} $\Gpm\subseteq G_\psi\subseteq\Gamma$, and $G_\psi$
contains every parabolic and elliptic element of $\Gamma$.
\item\label{lift:rational} \textup{\textbf{(Rationality)}} There is a unique morphism $\psi_\mu:\Xmu\to E'$
over $\Q$ induced from $\psi$ with $\psi_\mu(\infty)=0$. $\psi_\mu$
satisfies $\nu\circ\psi_\mu=\varphi\circ\varpi_\Gamma$, where
$\varpi_\Gamma:\Xmu\to X_\Gamma$ is the quotient map. In other words, the
following diagram commutes:
\[
\begin{tikzcd}
  \Xmu \arrow[r, dashed, "\psi_\mu"] \arrow[d, "\varpi_\Gamma"'] & E' \arrow[d, "\nu"] \\
  X_\Gamma \arrow[r, "\varphi"] & E
\end{tikzcd}
\]

The monodromy group
\[
  H_\psi=\{\psi(\gamma z)-\psi(z):\gamma\in\Gamma\}\subseteq\ker\nu
\]
consists of the points $\tau_\delta=\psi_\mu(\diam\delta\infty)$, where
$\diam\delta$ runs through the diamond operators in $\Gamma/\Gpm$. Moreover, these
points are rational, $H_\psi\subseteq E'(\Q)$, and
$\psi_\mu\circ\diam\delta=\psi_\mu+\tau_\delta$.

The component in \textup{(i)} descends to $\Q$, and its deck transformation group is the
constant group $H_\psi$, which is a quotient of $\Gamma/\Gamma'$, where
$\Gamma'$ is the subgroup generated by $\Gpm$ and the parabolic and elliptic
elements of $\Gamma$.
\end{enumerate}
\end{theorem}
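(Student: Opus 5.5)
The plan is to deduce (i) from the unbounded denominators theorem of Calegari--Dimitrov--Tang, then obtain (ii) and (iii) by analysing the finite abelian covering $X_{G_\psi}\to X_\Gamma$ at the cusps and in moduli terms.

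\textbf{Modularity.} By Lemma~\ref{lem:rho}, $G_\psi$ is normal of finite index in $\Gamma$, and $\psi$ descends to a holomorphic map $G_\psi\backslash\Hup\to E'(\C)$. I would first check that $x\circ\psi$ and $y\circ\psi$ are meromorphic at every cusp of $G_\psi$. Near a cusp $s$, $\varphi$ is holomorphic in a local parameter, and $\psi$ is a local branch of $\nu^{-1}\circ\varphi$, so it is holomorphic in a root of that parameter; this uses that $\psi$ is invariant under a finite-index subgroup of $\mathrm{Stab}(s)$. Hence $x\circ\psi$ and $y\circ\psi$ are modular functions for the finite-index subgroup $G_\psi$. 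By Lemma~\ref{lem:lift-expansion} their $q$-expansions lie in $\Q((q))$, and \BDC{} says the denominators are bounded. The unbounded denominators theorem then gives that $x\circ\psi$ and $y\circ\psi$ are modular for some congruence subgroup $\Gamma(M)$.

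To conclude that $G_\psi$ itself is congruence, I would argue as follows. The field $\C(x\circ\psi,y\circ\psi,\C(X_\Gamma))$ is the function field of the connected component $C$ of $X_\Gamma\times_{E}E'$ through $(\infty,0)$. That component is the quotient $G_\psi\backslash\Hup^*$, because the stabiliser of the branch $\psi$ is exactly $G_\psi$. So $G_\psi$ is the group fixing this field, and it contains $\Gamma(M)\cap\Gamma$. This proves (i).

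\textbf{Level.} For a parabolic $\gamma\in\Gamma$ fixing a cusp $s$, $\rho(\gamma)$ is the difference of the two limits of $\psi$ at $s$ along the cusp. Since $\psi$ is continuous at the cusp, once it has been shown to extend, this limit is the same on both sides, so $\rho(\gamma)=0$. For an elliptic $\gamma$ with fixed point $z_0\in\Hup$, evaluating at $z_0$ gives $\rho(\gamma)=\psi(z_0)-\psi(z_0)=0$.

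The containment $\Gpm\subseteq G_\psi$ is the step I expect to be the main obstacle. Since $G_\psi$ is congruence and normal in $\Gamma$ with abelian quotient $H_\psi$, the question is whether a congruence abelian cover of $X_\Gamma$ that is unramified everywhere must come from $(\Z/N\Z)^\times$. I would try to show that $\Gamma(M)\Gpm$ is generated by $\Gpm$ together with parabolic elements, using that $\Gamma_1(N)/\Gamma(M)\Gamma_1(N)$-type quotients are generated by unipotent images: $\mathrm{SL}_2(\Z/M)$ is generated by its upper and lower unipotents. The idea is that $\rho$ kills $\Gamma(M)\cap\Gamma$ and kills all unipotents, so it must factor through the diamond quotient $\Gamma/\Gpm$. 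The difficulty is that the conductor $M$ may exceed $N$, so I would have to control the congruence level of $G_\psi$, probably via strong approximation or a commutator argument with the unipotents in $\Gamma_1(N)$.

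\textbf{Rationality.} Once $\Gpm\subseteq G_\psi$ holds, $\psi$ is invariant under $\Gpm$ and so defines an analytic map $\Xmu(\C)\to E'(\C)$ sending $\infty$ to $0$. This map is algebraic by GAGA. Its $q$-expansion at the rational cusp $\infty$ has coefficients in $\Q$ by Lemma~\ref{lem:lift-expansion}, so every $\sigma\in\mathrm{Aut}(\C/\Q)$ fixes it, and it descends to a morphism $\psi_\mu$ over $\Q$. Uniqueness follows from $\psi_\mu(\infty)=0$ together with the uniqueness in Lemma~\ref{lem:lift}.

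For the monodromy group, take $\gamma\in\Gamma$ acting as $\diam\delta$. Then $\psi_\mu\circ\diam\delta-\psi_\mu$ is a constant lying in $\ker\nu$, and evaluating at $\infty$ shows the constant is $\tau_\delta=\psi_\mu(\diam\delta\infty)$. This point is rational because both the cusp and $\psi_\mu$ are defined over $\Q$. The descent of the component in (i) and the statement about its deck group then follow, since $C\simeq\Xmu/\ker(\delta\mapsto\tau_\delta)$. The fact that $H_\psi$ is a quotient of $\Gamma/\Gamma'$ is just the statement of (ii) restated.
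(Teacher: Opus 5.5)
Your treatment of (i) and (iii) is essentially sound. Your descent argument uses Galois invariance of the algebraized map, via its rational $q$-expansion at the rational cusp $\infty$. That is a valid alternative to the paper's argument with the connected component of $\Xmu\times_{E}E'$ through $(\infty,0)$.

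There is one omission in (i). The Calegari--Dimitrov--Tang theorem concerns holomorphic forms, but $x\circ\psi$ and $y\circ\psi$ have poles in $\Hup$ at the zeros of $\psi$. You must first multiply by $k\,\Delta^sP(j)$, with $P\in\Z[X]$ vanishing to high order at the $j$-values of those poles. Such a $P$ exists because these $j$-values are algebraic, since $\varphi$ is defined over $\Q$, and it keeps the expansion in $\Z[[q]]$.

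The genuine gap is the containment $\Gpm\subseteq G_\psi$, which you leave open. All of (iii) rests on it: without it $\psi$ does not even descend to $\Xmu(\C)=\Gpm\backslash\Hup^*$. Your formulation (``$\Gamma(M)\Gpm$ is generated by $\Gpm$ together with parabolic elements'') is circular. What you actually need is that $\Gamma_1(N)$ lies in the subgroup generated by $\Gamma(M)\cap\Gamma$ and the parabolic elements of $\Gamma$, with no control on $M$.

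The paper obtains this from cusp widths and Wohlfahrt's theorem, as follows.
\begin{enumerate}
\item Since $\rho$ kills every parabolic element of $\Gamma$ (which you did prove), each cusp has the same width for $G_\psi$ as for $\Gamma$.
\item These widths all divide $N$, and the cusp $0$ has width exactly $N$. So the level of $G_\psi$ (the lcm of its cusp widths) is $N$.
\item Wohlfahrt's theorem, applied to the congruence subgroup $G_\psi\ni -I$, gives $\pm\Gamma(N)\subseteq G_\psi$.
\item Then $T\in G_\psi$, and the image of $T$ generates $\Gamma_1(N)/\Gamma(N)$.
\end{enumerate}
So the size of $M$ never matters.

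Your unipotent idea can also be completed directly. Enlarge $M$ so that $N\mid M$; then $\Gamma(M)\subseteq\Gamma$, so $\Gamma(M)\subseteq G_\psi$. Work prime by prime via CRT.
\begin{itemize}
\item For $p\nmid N$, use elementary generation of $\mathrm{SL}_2(\Z/p^k\Z)$.
\item For $p\mid N$, the identity $\sm{1&b}{0&1}\sm{1&0}{c&1}=\sm{1&0}{c/u&1}\sm{u&0}{0&u^{-1}}\sm{1&b/u}{0&1}$, with $u=1+bc$ and $c\in N\Z$, puts the diagonal matrices with $u\equiv1\pmod{p^{v_p(N)}}$ into the generated group. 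Every element of the local image of $\Gamma_1(N)$ then factors as lower unipotent times diagonal times upper unipotent.
\end{itemize}
Hence $T$ and $\sm{1&0}{N&1}$ generate the image of $\Gamma_1(N)$ in $\mathrm{SL}_2(\Z/M\Z)$. Therefore $\Gamma_1(N)=\langle T,\sm{1&0}{N&1}\rangle\,\Gamma(M)\subseteq G_\psi$, since both generators are parabolic elements of $\Gamma$.
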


Theorem~\ref{thm:manin} uses the case $\nu=[c]$ where $c=|c_E|$, and
Theorem~\ref{thm:stevens} uses the case of the \'etale isogeny
$E_{\min}\to E_1$. 

We will prove the following proposition in Section~\ref{sec:integrality} to show that \BDC{} holds with $k=1$ in both cases:

\begin{proposition}\label{prop:integrality}
Let $V$ be $X_0(N)$ or $\Xmu$, let $E$ be an elliptic curve over $\Q$ of
conductor $N$ with newform $f$, and let $\varphi:V\to E$ be a morphism over
$\Q$ with $\varphi(\infty)=0$.
\begin{enumerate}[label=\textup{(\roman*)}]
\item\label{int:root} If $\varphi^*\omega_E=c\,\omega_f$ for an integer
$c\geq1$ such that $p^2\mid N$ for every prime $p\mid c$, then the normalized
lift of $\varphi$ through $[c]$ satisfies \BDC{} with $k=1$.
\item\label{int:etale} If $\varphi^*\omega_E=\omega_f$ and $\nu:E'\to E$ is an
isogeny over $\Q$ with $\nu^*\omega_E=\omega_{E'}$, then the normalized lift
of $\varphi$ through $\nu$ satisfies \BDC{} with $k=1$.
\end{enumerate}
\end{proposition}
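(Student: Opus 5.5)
We reduce both parts to a local statement in the formal group of $E'$ and then argue prime by prime. Fix a global minimal Weierstrass model of $E'$ with coordinates $x,y$, and let $t=-x/y$ be the associated formal parameter at $0$. The Néron differential is $\omega_{E'}=\omega(t)\dd t$ with $\omega(t)\in 1+t\Z[[t]]$, and the formal logarithm $\log_{E'}(t)=\int\omega$ lies in $t+t^2\Q[[t]]$. Its inverse $\exp_{E'}$ has the classical integrality property: the $n$-th coefficient has denominator dividing $n!$, and in fact $\exp_{E'}\in\Z_{(p)}[[T]]$ after the usual Hazewinkel-type control.

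Since $\psi(q)\to0$ as $q\to0$ (Lemma~\ref{lem:lift}), the function $t(\psi(q))$ is a power series in $q$ with no constant term, and
\[
\log_{E'}\bigl(t(\psi(q))\bigr)=\int\psi^*\omega_{E'} .
\]
In case (ii), $\psi^*\omega_{E'}=\psi^*\nu^*\omega_E=\varphi^*\omega_E=\omega_f$, so the logarithm equals $F(q)=\sum a_nq^n/n$. In case (i), $[c]^*\omega_E=c\,\omega_E$ gives $c\,\psi^*\omega_E=\varphi^*\omega_E=c\,\omega_f$, so again $\log_E(t(\psi))=F(q)$. In both cases this also shows $t(\psi(q))=\exp(F(q))\in\Q[[q]]$. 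Then $x=t^{-2}(1+\dots)$ and $y=-t^{-3}(1+\dots)$ have $\Z$-integral Laurent expansions in $t$, so it suffices to prove
\[
t(\psi(q))\in\Z[[q]],
\]
and we check this one prime $p$ at a time.

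For $p\nmid c$ in case (i), and for every $p$ in case (ii) once $\varphi$ is known to be $p$-integral, we use Honda theory. The series $F$ has the form $\sum a_n q^n/n$ with the $a_n$ satisfying the Hecke recursion $a_p a_{n}=a_{pn}+p\,a_{n/p}$ (with $a_{pn}=a_pa_n$ when $p\mid N$). By Honda's functional equation lemma, $\exp_G(F(q))\in\Z_p[[q]]$ for the formal group $G$ whose logarithm is $\sum a_n T^n/n$. On the other hand, $\widehat{E'}$ over $\Z_p$ is strictly isomorphic to $G$: it is isomorphic to $\widehat E$ via $\nu$, which is étale, so $\nu^*\omega_E=\omega_{E'}$, and $\widehat E$ has the same $L$-factor at $p$ (Honda, or Hazewinkel's functional equation lemma). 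Composing gives $t(\psi)=\exp_{E'}(F)\in\Z_p[[q]]$. This is the same mechanism that proves integrality of $\varphi$ itself.

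The remaining and harder case is $p\mid c$ in (i), where $p^2\mid N$. Here $\widehat E$ and the Honda group $G$ with logarithm $\sum a_nT^n/n$ are related by $\log_E=\frac1c\cdot$(something), not by a strict isomorphism, so the functional equation lemma applies only after handling the factor $c$. The key input is that $p^2\mid N$ forces $a_{p^k}=0$ and $a_{pn}=0$ for all $n$. Hence $F(q)=\sum_{p\nmid n}a_nq^n/n$ has no $p$ in any denominator, and $F$ satisfies the functional equation with zero Hecke polynomial term. So $F$ is the logarithm of the multiplicative-type degenerate group $G$ with $\exp_G(F)\in\Z_p[[q]]$.

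Moreover $E$ has additive reduction at $p$. Then $\widehat{E}$ over $\Z_p$ has logarithm $\sum b_nT^n/n$ with $b_{pn}$ in $p\Z_p$-controlled form, and by Hazewinkel's functional equation lemma every formal group law over $\Z_p$ whose $p$-typical part has trivial Frobenius term is strictly isomorphic to $G$. We would verify that the additive reduction gives exactly this condition for $\widehat{E'}=\widehat E$. Granting this, $t(\psi)=\exp_{E}(F)$ is $p$-integral.

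I expect this last step to be the main obstacle: matching $\widehat E$ with the degenerate Honda type at additive primes, and making sure the $c$-division is already absorbed because $\varphi^*\omega_E=c\,\omega_f$ rather than $\omega_f$. If the direct Hazewinkel argument fails, the fallback is to argue on the Néron model over $\Z_p$. The formal group of an additive-reduction Néron model is $p$-divisible-free, so multiplication by $p$ on $\widehat E(q\Z_p[[q]])$ has $\Z_p$-integral inverse image of integral points, provided the point lies in the appropriate filtration. We would show it does using $a_{pn}=0$, i.e.\ $\varphi$ factors through a $q$-expansion in $q^{\text{prime-to-}p}$-supported integral data.
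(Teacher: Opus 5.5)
\textbf{Genuine gap: the primes $p\mid c$ in part (i) are never proved.} This is the only non-routine case, and you close it with ``We would verify that the additive reduction gives exactly this condition \ldots\ Granting this''. So the $p$-integrality of $\exp_E(F)$ is assumed, not proved.

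The fallback does not supply it either. In the additive case $[c]_{\widehat E}=c\cdot h$ with $h$ a strict isomorphism over $\Z_p$. Inverting $[c]_{\widehat E}$ on integral points would therefore need two things you do not establish: $[p]_{\widehat E}\in p\Z_p[[w]]$, and $t_\varphi\in p^{\operatorname{ord}_p(c)}q\Z_p[[q]]$.

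Your diagnosis of the difficulty is also mistaken:
\begin{itemize}
\item The factor $c$ plays no role in comparing $\widehat E$ with the group $G$ whose logarithm is $F$. It was already absorbed when you derived $\log_E(t_\psi)=F$.
\item Since $a_n=0$ for $p\mid n$, we have $F\in T+T^2\Z_p[[T]]$. So $G$ is strictly isomorphic over $\Z_p$ to $\widehat{\mathbb G}_{\mathrm a}$, not of ``multiplicative type''. What you actually need is that $L_E$ and $L_E^{-1}$ have coefficients in $\Z_p$.
\item Your opening assertion that $\exp_{E'}\in\Z_{(p)}[[T]]$ in general is false. Integrality of the exponential forces $[p](w)=\exp_{E'}(p\log_{E'}(w))\in p\Z_p[[w]]$, which fails at good and multiplicative primes (e.g.\ for $\widehat{\mathbb G}_{\mathrm m}$). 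It holds precisely at additive primes, and that is the statement to be proved.
\item The proposal is internally inconsistent. For $p\nmid c$, and for every $p$ in part (ii), you invoke Honda's comparison $\widehat E\simeq G$ at all such primes, additive ones included. Yet at additive primes with $p\mid c$ you treat the same comparison as the main obstacle. If you accept Honda's Thm.~9 at every prime (as the paper remarks, part (i) follows from it), your $p\nmid c$ argument applies verbatim to $p\mid c$ and the case split is unnecessary.
\end{itemize}

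\textbf{The missing lemma, which is how the paper handles this case, is elementary.} $\widehat E$ is the completion along the zero section of the identity component $\cE^0$ of the N\'eron model. At additive reduction $\cE^0_{\F_p}\simeq\Ga$, on which $p$ acts as zero. Hence $[p]_{\widehat E}(w)=p\,g(w)$ with $g\in w+w^2\Z_p[[w]]$.

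Compare coefficients of $w^r$ in $L_E([p]_{\widehat E}(w))=pL_E(w)$. This gives $(1-p^{r-1})b_r=g_r+\sum_{\ell=2}^{r-1}p^{\ell-1}b_\ell\,[w^r]g(w)^\ell$. Since $1-p^{r-1}\in\Z_p^\times$, induction gives $L_E\in\Z_p[[w]]$, hence $L_E^{-1}\in\Z_p[[w]]$. Combined with the $p$-integrality of $\ell_f$ (which you correctly derived from $a_{pn}=0$), this gives $t_\psi=L_E^{-1}(\ell_f)\in q\Z_p[[q]]$.

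\textbf{For the remaining primes the paper avoids Honda theory entirely.} First, $t_\varphi\in q\Z[[q]]$ by the N\'eron mapping property, applied to a smooth model of $V$ over $\Z_p$ whose completion at $\infty$ is $\Spf\Z_p[[q]]$ (Lemma~\ref{lem:integral-param}). One then inverts $c^{-1}[c]_{\widehat E}\in w+w^2\Z_p[[w]]$ when $p\nmid c$, and $\widehat\nu\in w+w^2\Z[[w]]$ in part (ii), using Lemma~\ref{lem:inverse}.
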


Part \ref{int:root} also follows from Honda's comparison of formal groups
\cite[Thm.~9]{Honda1970}, but we give a direct proof in Section~\ref{sec:integrality}.

\subsection{Outline of this article}\label{subsec:strategy}

In Section~\ref{sec:deduction}, we deduce Theorems~\ref{thm:manin}
and~\ref{thm:stevens} from Theorem~\ref{thm:lifting},
Proposition~\ref{prop:integrality} and Proposition~\ref{prop:c-one}, whose
proofs are postponed to Sections~\ref{sec:integrality}--\ref{sec:c-two}.
Proposition~\ref{prop:integrality} gives \BDC{} for the two lifts we need, so
that Theorem~\ref{thm:lifting} applies to them. For the lift of $\varphi_0$
through $[c]$, optimality then gives $E[c]\subseteq E(\Q)$, and the Weil
pairing gives $c\mid2$. Proposition~\ref{prop:c-one} excludes $c=2$. Stevens'
conjectures follow from Theorem~\ref{thm:manin} together with
Theorem~\ref{thm:lifting} for the lift through $E_{\min}\to E_1$. We put the
deduction first so that the reader can see how Theorem~\ref{thm:lifting} is
used, and why it suffices for the Manin constant conjecture, before the
technical details.

In Section~\ref{sec:integrality}, we prove that the normalized lift $\psi$ has integral $q$-expansions and \BDC{} holds. The integrality of the modular parametrization (Lemma~\ref{lem:integral-param}) follows \cite[proof of Prop.~2]{Edixhoven1991} and \cite[proof of Thm.~1.6]{Stevens1989}. The integrality of the lift through $[c]$ can be traced back to Honda's result \cite[Thm.~9]{Honda1970}, and we give a direct proof in Section~\ref{sec:integrality}.

In Section~\ref{sec:lifting}, we prove Theorem~\ref{thm:lifting}, which is the heart of this article.  Here Proposition~\ref{prop:integrality} gives integral expansions of the normalized lift first, so \BDC{} holds with $k=1$. Theorem~\ref{thm:lifting} then turns this integrality into modularity and rationality of the lift.

In Section~\ref{sec:c-two}, we exclude the case $|c_E|=2$. For a prime
$p\mid N$ we construct a smooth scheme over the ring of integers of an
unramified extension of $\Q_p$, with a map from its generic fibre to $\Xmu$
and lifts of the diamond operators, whose geometric special fibre is an open
subscheme of an Igusa curve and hence connected. This forces the points
$\tau_\delta$ of Theorem~\ref{thm:lifting}\ref{lift:rational} into the
identity component of the N\'eron model of $E$ at $p$.
For $p\mid c$ this is impossible: $E$ has additive reduction at $p$,
so the identity component has at most $p$ $\Z_p$-points killed by $p$,
whereas $E[p]$ has $p^2$ points.
Section~\ref{sec:c-two} should be considered as part of
Section~\ref{sec:deduction}, but we separate it because the techniques are much more complicated. This section works for general $p$ but we only need $p=2$ for the purpose of this article.

Figure~\ref{fig:structure} summarizes the logical dependencies among these sections.

\begin{figure}[ht]
\centering
\begin{tikzpicture}[
  font=\footnotesize,
  >=stealth,
  box/.style={draw, rounded corners=2pt, align=center, inner sep=3pt,
    text width=3.3cm},
  sub/.style={draw, rounded corners=2pt, align=center, inner sep=3pt,
    text width=2.9cm, fill=black!5},
  arr/.style={->, semithick},
  lab/.style={font=\scriptsize, inner sep=1.5pt},
  sec/.style={font=\scriptsize, text=black!60}
]
% The lifting theorem
\node[sub, text width=3.45cm] (L1) at (1.1,0)
  {\textbf{(i) Modularity}\\ $G_\psi$ is congruence\\ {\scriptsize(Calegari--Dimitrov--Tang)}};
\node[sub] (L2) at (4.85,0)
  {\textbf{(ii) Level}\\ $\Gpm\subseteq G_\psi$\\ {\scriptsize(Wohlfahrt)}};
\node[sub] (L3) at (8.4,0)
  {\textbf{(iii) Rationality}\\ $\psi_\mu$ over $\Q$, $H_\psi\subseteq E'(\Q)$\\
   {\scriptsize(Galois descent)}};
\draw[arr] (L1) -- (L2);
\draw[arr] (L2) -- (L3);
\draw[rounded corners=3pt] (-0.9,-0.85) rectangle (10.1,1.25);
\node[anchor=north west, inner sep=2pt] at (-0.9,1.25)
  {Section~\ref{sec:lifting}: Theorem~\ref{thm:lifting}, assuming \BDC{}};

% The Manin constant
\node[box] (A) at (0.9,-2.2)
  {{\scriptsize Section~\ref{sec:integrality},
    Proposition~\ref{prop:integral-root}}\\
   Proposition~\ref{prop:integrality}\ref{int:root}\\
   \BDC{} for $\nu=[c]$};
\node[box] (D) at (8.1,-2.2)
  {{\scriptsize Section~\ref{sec:deduction}, Lemma~\ref{lem:optimal}}\\
   optimality: $H_\psi=E[c]$};
\node[box] (E) at (4.5,-3.9)
  {{\scriptsize Section~\ref{sec:deduction},
    Proposition~\ref{prop:rational-torsion}}\\
   $E[c]=\{\tau_\delta\}\subseteq E(\Q)$};
\draw[arr] (4.5,-0.85) -- node[lab, right] {$\nu=[c]$} (E);
\draw[arr] (A) -- (E);
\draw[arr] (D) -- (E);

\node[box] (F) at (2.0,-5.7)
  {{\scriptsize Section~\ref{sec:deduction}}\\ $c\mid2$ (Weil pairing)};
\node[box, text width=3.6cm] (G) at (7.0,-5.7)
  {{\scriptsize Section~\ref{sec:c-two}, Proposition~\ref{prop:c-one}}\\
   $c\neq2$ by local torsion};
\draw[arr] (E) -- (F);
\draw[arr] (E) -- (G);

\node[box, very thick] (H) at (4.5,-7.5)
  {{\scriptsize Section~\ref{sec:deduction}}\\ Theorem~\ref{thm:manin}\\ $c_E=\pm1$};
\draw[arr] (F) -- (H);
\draw[arr] (G) -- (H);

% Stevens' conjectures
\node[box] (B) at (0.9,-9.3)
  {{\scriptsize Section~\ref{sec:integrality},
    Proposition~\ref{prop:integral-etale}}\\
   Proposition~\ref{prop:integrality}\ref{int:etale}\\
   \BDC{} for $\nu=\alpha$};
\node[box, very thick] (I) at (7.0,-9.3)
  {{\scriptsize Section~\ref{sec:deduction}, Lemma~\ref{lem:jacobian}}\\
   Theorem~\ref{thm:stevens}\\ $c_1=\pm1$, $E_1\simeq E_{\min}$};
\draw[arr] (H) -- (B);
\draw[arr] (H) -- (I);
\draw[arr] (B) -- (I);
\draw[arr] (10.1,0.2) -- (10.45,0.2) |- node[lab, pos=0.25, right] {$\nu=\alpha$}
  (I.east);
\end{tikzpicture}
\caption[Logical structure of Sections~\ref{sec:deduction}--\ref{sec:c-two}]{Logical structure of Sections~\ref{sec:deduction}--\ref{sec:c-two}.
For example, the lower-left block means that
Proposition~\ref{prop:integrality}\ref{int:etale} is proved as
Proposition~\ref{prop:integral-etale} in Section~\ref{sec:integrality}, and
gives \BDC{} for $\nu=\alpha$.
An arrow means that the target is deduced from the source.}
\label{fig:structure}
\end{figure}

\subsection{What's new in this article}\label{subsec:new}

The study of the Manin constant has a long history. Edixhoven proved that $c_E$
is an integer \cite[Prop.~2]{Edixhoven1991}, and \v{C}esnavi\v{c}ius proved
that $\operatorname{ord}_p(c_E)=0$ whenever $p^2\nmid N$
\cite[Thm.~2.11]{Cesnavicius2018}. In particular the conjecture holds for
semistable $E$. For earlier special cases see \cite{Mazur1978},
\cite{AbbesUllmo1996}, \cite{ARS2006}, \cite{CNS} and the survey in
\cite[Section~1]{Cesnavicius2018}. For Stevens' conjectures, Stevens checked
conductors up to $200$ \cite[Thm.~7.1]{Stevens1989} and Vatsal treated some
further cases \cite{Vatsal2005}. These methods are local: they study the
N\'eron model of $J_0(N)$ at $p$, and they run into difficulty when
$p^2\mid N$, that is, when $E$ has additive reduction at $p$. The integrality of the coordinate functions of $\psi$ is closely related to Honda's theorem on formal groups \cite[Thm.~9]{Honda1970}. Lifts through multiplication maps on
a particular modular curve are also studied in \cite{BFF2023}.

Our starting point is more global. As $\Hup$ is simply connected, the parametrization $\varphi_0$ should have a ``$c$-th root''. So we take the normalized lift $\psi$ of $\varphi_0$ through $[c]$. A
priori $\psi$ is only a holomorphic map on $\Hup$, and there is no reason for
it to be invariant under any congruence subgroup. 

However, its $q$-expansion is easily seen to be integral and we may apply the unbounded denominators theorem of Calegari, Dimitrov and Tang \cite{CDT}. This congruence and argument from \cite{Yin2026} forces $\psi$ to be modular, and we use Galois descent to show the induced algebraic map from modular curve to $E$ comes from a morphism $\psi_\mu:\Xmu\to E$ over $\Q$. Optimality now forces $E[c]\subseteq E(\Q)$, and so $c\mid2$. The case $c=2$ needs a separate local argument, using the Igusa curve or Shimura subgroups. The same argument, applied to the \'etale isogeny $E_{\min}\to E_1$ instead of $[c]$, gives Stevens' Conjecture~II.

The main innovation of this paper is therefore Theorem~\ref{thm:lifting},
which lets us treat the normalized lift $\psi$ as an arithmetic object rather
than just a holomorphic map. The idea comes from Yin's work on cubic roots of
modular functions attached to CM elliptic curves \cite[Prop.~8.1]{Yin2026},
where the unbounded denominators theorem is used to show that these cubic
roots are invariant under a congruence subgroup. There the integral
expansions required by \cite{CDT} are deduced from the fact that the relevant
Manin--Stevens constants are units \cite[Thm.~6.1]{Yin2026}. We go the other
way: we prove integrality first, and then use it to show that the lift is
modular and to bound $c$.

\subsection{Acknowledgements}
Z.\,Chi would like to thank Hongbo Yin for sharing the motivation behind the
beautiful result in his paper and for his encouragement of this work. Z.\,Chi, S.\,Liu and P.\,Nie thank Yiyang Wang for meaningful conversations. Z.\,Chi would like to thank his advisor Yifeng Liu, and S.\,Liu would like to thank his advisor Xinwen Zhu, and also Richard Taylor, Brian Conrad for constant support.

%Z.\,Chi and S.\,Liu are deeply grateful to Richard Taylor and Brian Conrad
% for suggestions on improving the manuscript.
% !TEX root = ../rewritten.tex
\section{Deduction of the main theorems}\label{sec:deduction}

In this section we deduce Theorems~\ref{thm:manin} and~\ref{thm:stevens}
from Theorem~\ref{thm:lifting} and Proposition~\ref{prop:integrality}, which
are proved in Sections~\ref{sec:lifting} and~\ref{sec:integrality}. Together
with optimality, they give $E[c]\subseteq E(\Q)$
(Proposition~\ref{prop:rational-torsion}), and the Weil pairing then gives
$c\mid2$. The proof of Theorem~\ref{thm:manin} is completed in
Section~\ref{sec:c-two}, where Proposition~\ref{prop:c-one} uses
Proposition~\ref{prop:rational-torsion} and the geometry of Igusa curves to
show that $c=1$. Stevens' conjectures are then deduced from
Theorem~\ref{thm:manin} in Section~\ref{subsec:proof-stevens}.

\subsection{Proof of the Manin constant conjecture}\label{subsec:proof-manin}

Let $E$, $\pi_0$ and $\varphi_0$ be as in Theorem~\ref{thm:manin}. Replacing
$\pi_0$ by $-\pi_0$, which changes the sign of $c_E$ and preserves
optimality, we may assume that $c=c_E$ is positive. By
\cite[Prop.~2]{Edixhoven1991} $c$ is an integer, and by
\cite[Thm.~2.11]{Cesnavicius2018} $\operatorname{ord}_p(c)=0$ for every prime
$p$ with $p^2\nmid N$. Thus
\begin{equation}\label{eq:support}
  c\in\Z_{\geq1},\qquad p^2\mid N\ \text{for every prime }p\mid c .
\end{equation}

We apply the constructions of Section~\ref{subsec:normalized-lift} with
$\Gamma=\Gamma_0(N)$ and $\varphi=\varphi_0$. The input from optimality is
the following.

\begin{lemma}\label{lem:optimal}
Let $m\geq1$, and let $\psi$ be the normalized lift of $\varphi_0$ through
$[m]:E\to E$. Recall from Theorem~\ref{thm:lifting}\ref{lift:rational} its
monodromy group
\[
  H_\psi=\{\psi(\gamma z)-\psi(z):\gamma\in\Gamma_0(N)\}
  =\rho(\Gamma_0(N))\subseteq E[m].
\]
Then $H_\psi=E[m]$.
\end{lemma}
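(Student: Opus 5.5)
Suppose $H=H_\psi\subsetneq E[m]$, and derive a contradiction with the connectedness of $\ker\pi_0$. The idea is to factor $\varphi_0$ through the isogeny $E/H\to E$ and obtain a map $J_0(N)\to E/H$ that $\pi_0$ factors through. This needs no \BDC{} and no algebraicity of $\psi$, only the analytic structure from Lemma~\ref{lem:rho}.

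\textbf{Step 1: factor $\varphi_0$ through $E/H$.} Put $E''=E/H$ and let $\beta:E\to E''$ be the quotient map. Since $H\subseteq E[m]$, $[m]$ factors as $[m]=\alpha\circ\beta$ with $\alpha:E''\to E$ of degree $m^2/|H|>1$. Consider $\chi=\beta\circ\psi:\Hup\to E''(\C)$. For $\gamma\in\Gamma_0(N)$ we have $\chi(\gamma z)=\beta(\psi(z)+\rho(\gamma))=\chi(z)$ because $\rho(\gamma)\in H$. So $\chi$ is $\Gamma_0(N)$-invariant. As in Lemma~\ref{lem:lift}, it is holomorphic in $q$ near the cusp $\infty$ with value $0$. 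At the other cusps it is bounded, since $E''(\C)$ is compact and $\chi$ is locally a lift of $\varphi_0$. Hence $\chi$ descends to a holomorphic map $X_0(N)(\C)\to E''(\C)$ with $\alpha\circ\chi=\varphi_0$. By GAGA this map is algebraic over $\C$; algebraicity over $\Q$ is not needed for what follows.

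\textbf{Step 2: pass to Jacobians.} By the universal property of the Albanese/Jacobian (with base point $\infty\mapsto0$), $\chi$ induces a homomorphism $\pi'':J_0(N)_\C\to E''_\C$ with $\alpha\circ\pi''=\pi_0$. To see this, compare the maps $J_0(N)\to E$ induced by $\alpha\circ\chi$ and by $\varphi_0$: both are the unique homomorphisms extending the same map on $X_0(N)$. So $\pi_0$ factors through the isogeny $\alpha$ of degree $>1$.

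\textbf{Step 3: contradiction with optimality.} Now $\ker\pi_0=(\pi'')^{-1}(\ker\alpha)$, and $\pi''$ is surjective because its image is an abelian subvariety of the curve $E''$ mapping onto $E$. Therefore $\ker\pi_0/\ker\pi''\simeq\ker\alpha\neq0$, so $\ker\pi_0$ surjects onto the nontrivial finite group $\ker\alpha$. Hence its component group is nontrivial and $\ker\pi_0$ is disconnected, contradicting optimality. Connectedness of a kernel can be checked over $\C$, so working over $\C$ throughout is harmless.

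The main care point is Step 1: showing that $\chi$ really extends over all cusps, and getting the identity $\alpha\circ\pi''=\pi_0$ right with base points. Both are routine given Lemma~\ref{lem:lift}, so I expect no serious obstacle. The lemma is essentially the statement that $\varphi_0$ admits no nontrivial lift through any isogeny, and the monodromy group measures exactly this.
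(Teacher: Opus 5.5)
Your argument is correct, but it takes a different route from the paper's.

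The paper proves surjectivity directly. Connectedness of $\ker\pi_0$, via the homotopy exact sequence and the Abel--Jacobi isomorphism on $H_1$, makes $\varphi_{0*}:H_1(X_0(N)(\C),\Z)\to\Lambda$ surjective. Path-lifting then shows that for a loop $\ell$ in the complement of the cusps and elliptic images, $\rho(\gamma)$ is exactly the class of $\varphi_{0*}[\ell]$ in $\Lambda/m\Lambda\simeq E[m]$.

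You argue by contradiction from the other side of the covering-space correspondence. A proper $H$ lets $\beta\circ\psi$ descend to $X_0(N)(\C)\to E''(\C)$. The Albanese property then factors $\pi_0$ through the nontrivial isogeny $\alpha$, and a connected kernel cannot surject onto $\ker\alpha\neq0$.

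What your route buys: it needs no homology or homotopy sequence, only the universal property of the Jacobian, GAGA over $\C$, and the fact that connectedness of a group scheme over $\Q$ is preserved over $\C$. What it costs: you must check that $\beta\circ\psi$ extends over the cusps and elliptic points. There the real reason is the local-section argument: on a connected punctured neighbourhood, the descended map stays in a single sheet of $\alpha$ over a small disc around the value of $\varphi_0$. Compactness of $E''(\C)$ alone would not suffice, since a holomorphic map from a punctured disc to a torus need not extend.

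What the paper's route buys: an explicit description of $\rho$ as $\varphi_{0*}$ on $H_1$ reduced modulo $m$, which is more information than the equality $H_\psi=E[m]$.
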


\begin{proof}
Since $\ker\pi_0$ is connected, the homotopy sequence of
$0\to\ker\pi_0\to J_0(N)\to E\to0$ shows that
$\pi_{0*}:H_1(J_0(N)(\C),\Z)\to H_1(E(\C),\Z)$ is surjective. The
Abel--Jacobi map $\mathrm{AJ}:X_0(N)\to J_0(N)$, $P\mapsto[P]-[\infty]$,
induces an isomorphism
\[
  \mathrm{AJ}_*:H_1(X_0(N)(\C),\Z)\xrightarrow{\ \sim\ }H_1(J_0(N)(\C),\Z).
\]
Since $\varphi_0=\pi_0\circ\mathrm{AJ}$, the map
\[
  \varphi_{0*}:H_1(X_0(N)(\C),\Z)\longrightarrow H_1(E(\C),\Z)
\]
is also surjective. Let $X^\circ\subseteq X_0(N)(\C)$ be the complement of the cusps and of the
images of the elliptic points, and $\Hup^\circ\subseteq\Hup$ its preimage, so
that $\Hup^\circ\to X^\circ$ is a covering with group
$\Gamma_0(N)/\{\pm I\}$. Let $\ell$ be a loop in $X^\circ$ based at the image
of $z_0\in\Hup^\circ$. It lifts to a path in $\Hup^\circ$ from $z_0$ to
$\gamma z_0$ for some $\gamma\in\Gamma_0(N)$. Along this path, $\psi$ is a
continuous lift of $\varphi_0\circ\ell$ through $[m]$, and its endpoints
differ by $\rho(\gamma)$. On the other hand, under the identification
$E[m]=\tfrac1m\Lambda/\Lambda\simeq\Lambda/m\Lambda$, where
$\Lambda=H_1(E(\C),\Z)$, the endpoints of any lift of the loop
$\varphi_0\circ\ell$ through $[m]$ differ by the class of
$\varphi_{0*}[\ell]$. Hence this class is $\rho(\gamma)\in H_\psi$. Since
loops in $X^\circ$ generate $H_1(X_0(N)(\C),\Z)$, and $\varphi_{0*}$ is
surjective, $H_\psi$ contains, and therefore equals, $E[m]$.
\end{proof}

\begin{proposition}\label{prop:rational-torsion}
Let $\psi$ be the normalized lift of $\varphi_0$ through $[c]$. There are a
morphism $\psi_\mu:\Xmu\to E$ over $\Q$ and points $\tau_\delta\in E(\Q)$,
$\delta\in(\Z/N\Z)^\times$, with
\begin{equation}\label{eq:root-map}
  [c]\circ\psi_\mu=\varphi_0\circ\varpi,\qquad
  \psi_\mu\circ\diam\delta=\psi_\mu+\tau_\delta,\qquad
  \tau_\delta=\psi_\mu(\diam\delta\infty),
\end{equation}
and
\begin{equation}\label{eq:rational-torsion}
  \{\tau_\delta:\delta\in(\Z/N\Z)^\times\}=E[c]\subseteq E(\Q).
\end{equation}
\end{proposition}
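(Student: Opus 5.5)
The plan is to combine Proposition~\ref{prop:integrality}\ref{int:root}, Theorem~\ref{thm:lifting} and Lemma~\ref{lem:optimal}, all applied to the single datum $\Gamma=\Gamma_0(N)$, $\varphi=\varphi_0$, $\nu=[c]$.

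\emph{Step 1 (BDC).} By \eqref{eq:support}, $c$ is a positive integer and $p^2\mid N$ for every prime $p\mid c$. We also have $\varphi_0^*\omega_E=c\,\omega_f$ and $\varphi_0(\infty)=\pi_0([\infty]-[\infty])=0$. With $V=X_0(N)$, these are exactly the hypotheses of Proposition~\ref{prop:integrality}\ref{int:root}. Hence the normalized lift $\psi$ of $\varphi_0$ through $[c]$ satisfies \BDC{} with $k=1$. If $c=1$ everything is trivial, since $\psi=\varphi_0$, $\psi_\mu=\varphi_0\circ\varpi$, all $\tau_\delta=0$ and $E[1]=0$. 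So the argument only has content for $c>1$.

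\emph{Step 2 (lifting).} Apply Theorem~\ref{thm:lifting} with $\Gamma=\Gamma_0(N)$. Then $\varpi_\Gamma=\varpi$, and $\Delta_\Gamma$ is all of $(\Z/N\Z)^\times/\{\pm1\}$. Part~\ref{lift:rational} gives a morphism $\psi_\mu:\Xmu\to E$ over $\Q$ with $\psi_\mu(\infty)=0$ and $[c]\circ\psi_\mu=\varphi_0\circ\varpi$. This is the first identity in \eqref{eq:root-map}. The same part gives points $\tau_\delta=\psi_\mu(\diam\delta\infty)$ with $\psi_\mu\circ\diam\delta=\psi_\mu+\tau_\delta$, and shows that $H_\psi=\{\tau_\delta\}\subseteq E(\Q)$.

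Here $\diam\delta$ runs over $\Gamma_0(N)/\Gpm\simeq(\Z/N\Z)^\times/\{\pm1\}$. Since $\diam{-1}$ is the identity of $\Xmu$, we have $\tau_{-\delta}=\tau_\delta$. So $\tau_\delta$ is well defined for every $\delta\in(\Z/N\Z)^\times$, and the set $\{\tau_\delta:\delta\in(\Z/N\Z)^\times\}$ equals $H_\psi$. This gives the remaining identities in \eqref{eq:root-map}. I would also check that the $\tau_\delta$ defined via $\psi_\mu$ agree with the monodromy values $\rho(\gamma)$ for $\gamma$ with lower-right entry $d\equiv\pm\delta$. This holds because $\gamma$ acts on $\Xmu(\C)$ as $\diam d$, so
\[
\psi(\gamma z)-\psi(z)=\psi_\mu(\diam d\,z)-\psi_\mu(z)=\tau_d .
\]

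\emph{Step 3 (optimality).} By Lemma~\ref{lem:optimal} with $m=c$, we have $H_\psi=E[c]$. Combined with Step~2, this gives \eqref{eq:rational-torsion}.

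The main obstacle is matching conventions in Step~2. The identification $\Gpm\backslash\Hup^*\simeq\Xmu(\C)$ must be compatible with the convention that $\diam d$ corresponds to $\gamma$. We also need $\varphi_0\circ\varpi$, read through that identification, to be the function $\varphi_0$ on $\Hup$ used to define $\psi$. Both follow from the conventions fixed in Section~\ref{subsec:normalized-lift}. All the substantive content, namely modularity via unbounded denominators and Galois descent, is already packaged in Theorem~\ref{thm:lifting}.
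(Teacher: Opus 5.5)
Your proposal is correct and follows the paper's proof essentially step for step. You obtain \BDC{} from \eqref{eq:support} and Proposition~\ref{prop:integrality}\ref{int:root} with $V=X_0(N)$, then apply Theorem~\ref{thm:lifting}\ref{lift:rational} with $\Gamma=\Gamma_0(N)$, $\nu=[c]$, $\varphi=\varphi_0$, and conclude with $H_\psi=E[c]$ from Lemma~\ref{lem:optimal}. Your extra checks ($\tau_{-\delta}=\tau_\delta$ and $\rho(\gamma)=\tau_d$) are already contained in Proposition~\ref{prop:translations} and the proof of Theorem~\ref{thm:lifting}\ref{lift:rational}, so they are harmless but not needed.
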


\begin{proof}
By \eqref{eq:support} and
Proposition~\ref{prop:integrality}\ref{int:root}, applied with $V=X_0(N)$,
$\psi$ satisfies \BDC{} with $k=1$.
Theorem~\ref{thm:lifting}\ref{lift:rational}, with $\Gamma=\Gamma_0(N)$,
$\nu=[c]$ and $\varphi=\varphi_0$, gives $\psi_\mu$, and shows that the points
$\tau_\delta=\psi_\mu(\diam\delta\infty)$, $\delta\in(\Z/N\Z)^\times$, which are
rational, form $H_\psi$, and satisfy
$\psi_\mu\circ\diam\delta=\psi_\mu+\tau_\delta$. Finally $H_\psi=E[c]$ by
Lemma~\ref{lem:optimal}.
\end{proof}

\begin{proof}[Proof of Theorem~\textup{\ref{thm:manin}}]
By Proposition~\ref{prop:rational-torsion}, $E[c]\subseteq E(\Q)$. The Weil
pairing $E[c]\times E[c]\to\mu_c$ is surjective and Galois equivariant, so
$\mu_c\subseteq\Q$, and hence $c\mid2$. Finally $c\neq2$ will be proved in Section~\ref{sec:c-two} as
Proposition~\ref{prop:c-one}. Hence $c=1$, that is, $c_E=\pm1$.
\end{proof}

Thus the arguments of Sections~\ref{sec:integrality} and~\ref{sec:lifting} prove Theorem~\ref{thm:manin} at every odd prime, and Section~\ref{sec:c-two} is needed only to exclude an optimal curve with additive reduction at $2$ whose full
$2$-torsion is rational.

\subsection{Proof of Stevens' conjectures}\label{subsec:proof-stevens}

Let $\mathcal C$ be an isogeny class of conductor $N$ with newform $f$, and
let $\pi_1$, $\varphi_1$, $E_1$, $c_1$ and $E_{\min}$ be as in
Section~\ref{subsec:results}. Let $\pi_0:J_0(N)\twoheadrightarrow E$ be
the optimal quotient with $E\in\mathcal C$.

The curve $\Xmu$ is Shimura's canonical model of $X_1(N)$ used by Stevens: in
the proof of \cite[Thm.~1.6]{Stevens1989}, $X_1(N)_\Q$ is identified with the
$\mu_N$-moduli space, and his rational cusp $0$ with our cusp $\infty$ (see
also \cite[proof of Lemma~6.1.6]{CES}). His variable $\tau_{\mathrm{St}}$ and ours are related by
$z=-1/(N\tau_{\mathrm{St}})$, so that $e^{-2\pi i/(N\tau_{\mathrm{St}})}=q$. His
differential
$\omega_f^{\mathrm{St}}=2\pi if(\tau_{\mathrm{St}})\dd\tau_{\mathrm{St}}$ is
$\varepsilon_N\cdot2\pi if(z)\dd z$ in our coordinate, where
$\varepsilon_N=\pm1$ is the eigenvalue of the Fricke involution on $f$.
Hence, regarding $\varphi_1$ as a morphism $\Xmu\to E_1$ with
$\varphi_1(\infty)=0$,
\begin{equation}\label{eq:stevens-normalization}
  \varphi_1^*\omega_{E_1}=\varepsilon_Nc_1\,f(q)\frac{\dd q}{q}.
\end{equation}

\begin{lemma}\label{lem:jacobian}
Let $V$ be a smooth projective geometrically connected curve over a field of
characteristic zero with a rational point $o$, let
$\pi:J_V\twoheadrightarrow D_1$ be a quotient with connected kernel onto an
elliptic curve, and let $g:V\to D_2$ be a morphism to an elliptic curve with
$g(o)=0$. If $g_*(\ker\pi)$ is finite, where $g_*:J_V\to D_2$ is the induced
homomorphism, then $g_*=u\circ\pi$ for a unique homomorphism $u:D_1\to D_2$.
If moreover $\lambda\circ g_*=\pi$ for an isogeny $\lambda:D_2\to D_1$, then
$\lambda\circ u=\mathrm{id}_{D_1}$, and $\lambda$ and $u$ are isomorphisms.
\end{lemma}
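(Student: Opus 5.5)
The plan is to reduce both assertions to elementary facts about abelian varieties: quotients by connected kernels, and how homomorphisms factor through them. Write $K=\ker\pi$, which is an abelian subvariety of $J_V$ because it is connected (and reduced, since we are in characteristic zero).

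\emph{Factorization.} The image $g_*(K)\subseteq D_2$ is the image of a connected group variety, so it is connected. It is also finite, hence $g_*(K)=0$, that is, $K\subseteq\ker g_*$. Since $\pi$ is surjective with kernel exactly $K$, it identifies $D_1$ with the quotient $J_V/K$. The universal property of the quotient then gives a unique homomorphism $u:D_1\to D_2$ with $g_*=u\circ\pi$. This $u$ is defined over the base field because $g_*$, $\pi$ and $K$ are. Uniqueness also follows directly: if $u\circ\pi=u'\circ\pi$, then $u=u'$ because $\pi$ is an epimorphism (it is surjective and faithfully flat).

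\emph{The second assertion.} Suppose $\lambda\circ g_*=\pi$. Then
\[
\lambda\circ u\circ\pi=\lambda\circ g_*=\pi=\mathrm{id}_{D_1}\circ\pi,
\]
and surjectivity of $\pi$ gives $\lambda\circ u=\mathrm{id}_{D_1}$. Hence $u$ is injective as a morphism of group schemes, so $\ker u$ is trivial. Now $u$ is a homomorphism between elliptic curves and is nonzero, since $\lambda\circ u\neq0$. So $u$ is an isogeny with trivial kernel, which makes it an isomorphism; in degree terms, $\deg u=1$. It follows that $\lambda=u^{-1}$ is also an isomorphism.

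I expect no step to be a genuine obstacle. The only point needing care is the use of the quotient $J_V\to J_V/K$ as a scheme-theoretic quotient: the vanishing $g_*|_K=0$ must hold scheme-theoretically. This is automatic here, because $K$ is reduced in characteristic zero and a connected reduced group scheme has no nonzero homomorphism to a finite group scheme. Equivalently, we can argue on $\C$-points and then use that homomorphisms of abelian varieties are determined by their effect on $\bar k$-points. The curve $V$, the point $o$ and the map $g$ enter only through $g_*$; the normalization $g(o)=0$ is what makes $g_*$ the map induced on $J_V$ by $g$ via $P\mapsto[P]-[o]$.
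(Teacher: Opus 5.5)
Your proposal is correct and follows the paper's proof. A connected, finite image forces $g_*(\ker\pi)=0$, so $g_*$ factors through $J_V/\ker\pi=D_1$, and surjectivity of $\pi$ turns $\lambda u\pi=\pi$ into $\lambda u=\mathrm{id}$. The only cosmetic difference is that you finish by noting $\ker u=0$, whereas the paper uses the degree count $\deg\lambda\cdot\deg u=1$.
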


\begin{proof}
The image $g_*(\ker\pi)$ is connected and finite, hence zero, so $g_*$ factors
through $J_V/\ker\pi=D_1$. If $\lambda g_*=\pi$, then $\lambda u\pi=\pi$, and
the surjectivity of $\pi$ gives $\lambda u=\mathrm{id}$. Taking degrees yields $\deg\lambda=\deg u=1$.
\end{proof}

\begin{proof}[Proof of Theorem~\textup{\ref{thm:stevens}}]
By \cite[Lemma~2.12]{Cesnavicius2018}, applied with $H=\Gamma_1(N)$ and
$H'=\Gamma_0(N)$, $c_1$ divides $c_E$, so $c_1=\pm1$ by
Theorem~\ref{thm:manin}.
Replacing $\varphi_1$ by $-\varphi_1$ if necessary, we may assume that
\begin{equation}\label{eq:stevens-unit}
  \varphi_1^*\omega_{E_1}=f(q)\frac{\dd q}{q}=\omega_f .
\end{equation}

It remains to prove that $E_1\simeq E_{\min}$. By
\cite[Thm.~2.3]{Stevens1989} there is an isogeny $\alpha:E_{\min}\to E_1$
which is \'etale on N\'eron models over $\Z_p$ for every $p$. By
\cite[Lemma~2.2]{Stevens1989} it pulls back a N\'eron differential to a
N\'eron differential at every $p$, so $\alpha^*\omega_{E_1}=\pm\omega_{E_{\min}}$,
and replacing $\alpha$ by $-\alpha$ we may assume that
$\alpha^*\omega_{E_1}=\omega_{E_{\min}}$. Let $\psi'$ be the normalized lift of
$\varphi_1$ through $\alpha$. By \eqref{eq:stevens-unit} and
Proposition~\ref{prop:integrality}\ref{int:etale}, applied with $V=\Xmu$,
$\psi'$ satisfies \BDC{} with $k=1$.

Theorem~\ref{thm:lifting}\ref{lift:rational}, with $\Gamma=\Gpm$,
$\varphi=\varphi_1$ and $\nu=\alpha$, now gives a morphism
$h':\Xmu\to E_{\min}$ over $\Q$ with $h'(\infty)=0$ and
$\alpha\circ h'=\varphi_1$. On Jacobians $\alpha\circ h'_*=\pi_1$, so
$h'_*(\ker\pi_1)\subseteq\ker\alpha$ is finite, and Lemma~\ref{lem:jacobian}
with $\pi=\pi_1$, $g=h'$ and $\lambda=\alpha$ shows that $\alpha$ is an
isomorphism.
\end{proof}

% !TEX root = ../rewritten.tex
\section{Integrality}\label{sec:integrality}

In this section we prove Proposition~\ref{prop:integrality}: part
\ref{int:root} is Proposition~\ref{prop:integral-root} and part
\ref{int:etale} is Proposition~\ref{prop:integral-etale}. The desired \BDC{} property follows from them.

For an elliptic curve $E$ over $\Q_p$ we write $\cE$ for its N\'eron model
over $\Z_p$, $\cE^0$ for its identity component, and
\[
  E(\Q_p)=\cE(\Z_p),\qquad E^0(\Q_p)=\cE^0(\Z_p).
\]
For a minimal Weierstrass equation we use the parameter $t=-x/y$ at the
origin, write $\widehat E$ for the formal group in this parameter, and
\[
  L_E(w)=w+\sum_{r\geq2}b_rw^r\in\Q_p[[w]]
\]
for its normalized logarithm. The smooth locus of the minimal Weierstrass model
is $\cE^0$, and $\widehat E$ is its completion along the zero section.

For an elliptic curve $E$ over $\Q$ we always use a global minimal Weierstrass
equation, and $\omega_E$ denotes its invariant differential which is a
N\'eron differential, and its expansion in $t$ is $\dd L_E(t)$. For a morphism
$g$ to $E$ with $g(\infty)=0$ we write $t_g=t\circ g$.

\subsection{Formal logarithms at primes of additive reduction}\label{subsec:formal-log}

\begin{lemma}\label{lem:inverse}
Let $A$ be a commutative ring. A power series
$w+\sum_{r\geq2}\beta_rw^r\in A[[w]]$ has a compositional inverse in
$A[[w]]$.
\end{lemma}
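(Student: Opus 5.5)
We construct the inverse $g(w)=w+\sum_{r\geq2}\gamma_rw^r$ coefficient by coefficient, solving $f(g(w))=w$ where $f(w)=w+\sum_{r\geq2}\beta_rw^r$. Write $f(w)=w+h(w)$ with $h\in w^2A[[w]]$. Then for any $g\in wA[[w]]$ the composite $f(g(w))$ is well defined in $A[[w]]$, since $g^r\in w^rA[[w]]$ and so only finitely many terms contribute to each coefficient.

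For $n\geq2$, the coefficient of $w^n$ in $f(g(w))=g(w)+\sum_{r\geq2}\beta_rg(w)^r$ has the form
\[
  \gamma_n+P_n(\beta_2,\dots,\beta_n,\gamma_2,\dots,\gamma_{n-1}).
\]
Here $P_n$ is a polynomial with integer coefficients. Indeed, for $r\geq2$ the coefficient of $w^n$ in $g^r$ involves only $\gamma_j$ with $j\leq n-r+1\leq n-1$, because every factor contributes degree at least $1$. So we may set $\gamma_1=1$ and define inductively $\gamma_n=-P_n(\dots)\in A$. This gives $g\in A[[w]]$ with $f\circ g=w$; no division is ever needed, since the coefficient of $\gamma_n$ is $1$.

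It remains to see that $g$ is a two-sided inverse. Applying the same construction to $g$, which also has linear term $w$, gives $k\in A[[w]]$ with $g\circ k=w$. Using associativity of composition for series without constant term,
\[
  f=f\circ(g\circ k)=(f\circ g)\circ k=k,
\]
so $g\circ f=w$ as well.

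The only point needing care is the triangularity claim, that $\gamma_n$ enters the $w^n$ coefficient linearly with coefficient $1$ and the other terms involve only earlier $\gamma_j$. This is a direct degree count and is the whole content of the argument. Associativity of composition in $wA[[w]]$ is standard: check it modulo $w^{n}$ for each $n$, where everything reduces to polynomials.
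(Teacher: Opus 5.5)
Your proof is correct and follows the same approach as the paper. The $w^n$-coefficient of the composite is $\gamma_n$ plus an integer polynomial in $\beta_2,\dots,\beta_n,\gamma_2,\dots,\gamma_{n-1}$, so the $\gamma_n$ are determined recursively in $A$ with no division; your additional argument that this right inverse is also a left inverse, via $f=f\circ(g\circ k)=(f\circ g)\circ k=k$, is a detail the paper leaves implicit but later relies on when it writes $t_\psi=g^{-1}(c^{-1}t_\varphi)$.
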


\begin{proof}
Write the inverse as $w+\sum_{r\geq2}\gamma_rw^r$. The coefficient of $w^r$ in
the composite is $\gamma_r$ plus a polynomial with integer coefficients in
$\beta_2,\dots,\beta_r,\gamma_2,\dots,\gamma_{r-1}$, so the $\gamma_r$ are
determined recursively in $A$.
\end{proof}

\begin{lemma}\label{lem:formal-log}
Let $F$ be a one-dimensional commutative formal group law over $\Z_p$ with
$[p]_F(w)\in p\Z_p[[w]]$. Then the normalized logarithm $L_F$ and its
compositional inverse have coefficients in $\Z_p$.
\end{lemma}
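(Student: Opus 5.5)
The plan is to exploit the functional equation $L_F([p]_F(w))=p\,L_F(w)$ and run an induction on the coefficients of $L_F$; integrality of the inverse then follows from Lemma~\ref{lem:inverse}.

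First, I set up the data. Since $F$ is defined over $\Z_p\subseteq\Q_p$, it has a unique normalized logarithm $L_F(w)=w+\sum_{r\geq2}b_rw^r\in\Q_p[[w]]$. This is an isomorphism $F\to\Ga$ over $\Q_p$, so $L_F([p]_F(w))=p\,L_F(w)$. The series $[p]_F(w)$ has linear term $pw$. By hypothesis all its coefficients lie in $p\Z_p$, so I can write $[p]_F(w)=p\,g(w)$ with $g(w)=w+\sum_{r\geq2}g_rw^r\in\Z_p[[w]]$. By Lemma~\ref{lem:inverse}, $g$ has a compositional inverse $h(w)=w+\sum_{r\ge2}h_rw^r\in\Z_p[[w]]$. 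Substituting $w\mapsto h(w)$ into the functional equation gives
\[
  L_F(pw)=p\,L_F(h(w)).
\]

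Next, I compare coefficients of $w^r$ and induct on $r$, assuming $b_2,\dots,b_{r-1}\in\Z_p$. The left side has coefficient $p^rb_r$. On the right, the coefficient of $w^r$ in $L_F(h(w))$ is $b_r$ plus a polynomial with integer coefficients in $b_2,\dots,b_{r-1}$ and $h_2,\dots,h_r$. Indeed $h$ has leading term $w$, so $b_r$ appears only through $b_rh(w)^r$, whose $w^r$-coefficient is $b_r$. By the induction hypothesis this polynomial is some $a_r\in\Z_p$. Hence
\[
  p^rb_r=p\,b_r+p\,a_r,\qquad\text{so}\qquad (p^{r-1}-1)\,b_r=a_r\in\Z_p .
\]
For $r\geq2$ the factor $p^{r-1}-1$ is a unit in $\Z_p$, so $b_r\in\Z_p$. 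This completes the induction, and $L_F\in\Z_p[[w]]$.

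Finally, $L_F$ has the form $w+\sum_{r\ge2}b_rw^r$ with $b_r\in\Z_p$, so Lemma~\ref{lem:inverse} with $A=\Z_p$ shows that its compositional inverse also lies in $\Z_p[[w]]$.

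The only step needing care is the coefficient comparison. One must check that $b_r$ enters the $w^r$-coefficient of $L_F(h(w))$ with coefficient exactly $1$, and that everything else involves only lower $b_j$ and integral $h_j$. This holds because $h(w)\equiv w \pmod{w^2}$. The key point is that $p$ divides every coefficient of $[p]_F$, which is what makes $g$, and hence $h$, integral.
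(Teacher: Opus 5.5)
Your proof is correct and is essentially the paper's argument. The paper compares coefficients directly in $L_F(p\,g(w))=p\,L_F(w)$ to get $(1-p^{r-1})b_r=g_r+\sum_{\ell=2}^{r-1}p^{\ell-1}b_\ell\,[w^r]\,g(w)^\ell$; your precomposition with $h=g^{-1}$ changes only the bookkeeping, not the key point that $1-p^{r-1}$ is a $p$-adic unit, and both arguments finish with Lemma~\ref{lem:inverse}.
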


\begin{proof}
Write $[p]_F(w)=p\,g(w)$ with $g(w)=w+\sum_{r\geq2}g_rw^r\in\Z_p[[w]]$, and
$L_F(w)=\sum_{r\geq1}b_rw^r$ with $b_1=1$. Comparing the coefficients of
$w^r$, $r\geq2$, in $L_F([p]_F(w))=pL_F(w)$ gives
\[
  (1-p^{r-1})\,b_r=g_r+\sum_{\ell=2}^{r-1}p^{\,\ell-1}b_\ell\,[w^r]\,g(w)^\ell ,
\]
where $[w^r]$ denotes the coefficient of $w^r$. Since $1-p^{r-1}$ is a unit
in $\Z_p$, induction on $r$ gives $b_r\in\Z_p$. The assertion about
$L_F^{-1}$ follows from Lemma~\ref{lem:inverse}.
\end{proof}

\begin{lemma}\label{lem:additive}
Let $E$ be an elliptic curve over $\Q_p$ with additive reduction. The formal
group of a minimal Weierstrass equation satisfies
$[p]_{\widehat E}(w)\in p\Z_p[[w]]$. Hence $L_E$ and $L_E^{-1}$ have
coefficients in $\Z_p$.
\end{lemma}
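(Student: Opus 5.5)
We need $[p]_{\widehat E}(w)\in p\Z_p[[w]]$, i.e. that $[p]$ vanishes modulo $p$. Once we have it, the statement about $L_E$ and $L_E^{-1}$ follows immediately from Lemma~\ref{lem:formal-log} applied to $F=\widehat E$, since the formal group law of a minimal Weierstrass equation has coefficients in $\Z_p$.

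The plan is to reduce modulo $p$. The formal group $\widehat E$ comes from the minimal Weierstrass model over $\Z_p$, so its reduction $\widehat E\bmod p$ is the formal group over $\F_p$ of the reduced Weierstrass curve $\widetilde E$. More precisely, it is the completion at the identity of the smooth locus $\widetilde E^{\mathrm{ns}}$, which is the special fibre of $\cE^0$. Since reduction is additive, $\widetilde E^{\mathrm{ns}}\simeq\Ga$ over $\overline{\F}_p$. The group law of $\widehat E$ has $\Z_p$-coefficients, so $[p]_{\widehat E}(w)\bmod p=[p]_{\widehat E\bmod p}(w)$. The claim therefore reduces to showing that multiplication by $p$ on the formal group of $\widetilde E^{\mathrm{ns}}$ is identically zero.

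For this step there are two options. The first is to use $\widetilde E^{\mathrm{ns}}_{\overline{\F}_p}\simeq\Ga$, on which $[p]=0$ as a morphism. Since $[p]$ on the smooth group scheme $\widetilde E^{\mathrm{ns}}$ is then the zero morphism (it can be checked after base change to $\overline{\F}_p$), it induces the zero endomorphism of the formal completion at the identity. Hence $[p]_{\widehat E\bmod p}(w)=0$ in $\F_p[[w]]$.

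The alternative is to argue directly with the formal group law. Over $\F_p$ the reduced formal group is the completion of a one-dimensional unipotent group, and its height is infinite, so $[p]=0$. I prefer the first route since it is cleaner.

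The only point requiring care, and where I expect the main obstacle, is the identification of the reduction mod $p$ of $\widehat E$ (defined by the power series in $t=-x/y$) with the completion of $\widetilde E^{\mathrm{ns}}$ at the origin. This is standard (Silverman, VII.2), because the formal group law is built from the Weierstrass equation with integral coefficients and the origin reduces to a smooth point. Together with the compatibility of $[p]$ with reduction, this gives $[p]_{\widehat E}(w)\equiv0\pmod p$, i.e. $[p]_{\widehat E}(w)\in p\Z_p[[w]]$. Lemma~\ref{lem:formal-log} then finishes the proof.
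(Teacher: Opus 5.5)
Your proposal is correct and follows the paper's proof. You identify $\widehat E$ modulo $p$ with the completion of $\cE^0_{\F_p}$, use that this group is a form of $\Ga$ so that $[p]=0$, and conclude with Lemma~\ref{lem:formal-log}. The only cosmetic difference is that you verify $[p]=0$ after base change to $\overline{\F}_p$, whereas the paper notes that a form of $\Ga$ over the perfect field $\F_p$ is already isomorphic to $\Ga$.
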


\begin{proof}
The formal group $\widehat E$ is the completion of $\cE^0$ along the zero
section. The special fibre $\cE^0_{\F_p}$ is a form
of $\Ga$ over the perfect field $\F_p$, hence isomorphic to $\Ga$, on which
multiplication by $p$ is zero. Therefore $[p]_{\widehat E}$ reduces to zero
modulo $p$, and Lemma~\ref{lem:formal-log} applies.
\end{proof}

\subsection{Integrality of the normalized lift}\label{subsec:integral-lift}

Let $\Gamma$, $\nu:E'\to E$ and $\varphi:X_\Gamma\to E$ be as in
Section~\ref{subsec:normalized-lift}, and let $\psi$ be the normalized lift
of $\varphi$ through $\nu$. We write $x,y$ for the coordinate functions of a
global minimal Weierstrass equation of $E'$, and $t$ for the parameters of
$E'$ and of $E$. Let $\widehat\nu(w)\in\Q[[w]]$ be the
expansion of $\nu$ in these parameters, so that $t\circ\nu=\widehat\nu(t)$
near the origin. Its linear coefficient is the constant $d\in\Q^\times$ with
$\nu^*\omega_E=d\,\omega_{E'}$, which is nonzero because $\nu$ is \'etale.

\begin{lemma}\label{lem:lift-expansion}
We have $t_\psi=\widehat\nu^{-1}(t_\varphi)$, and this series is a nonzero
element of $q\,\Q[[q]]$. Consequently $x\circ\psi$ and $y\circ\psi$ are
Laurent series in $q$ with rational coefficients.
\end{lemma}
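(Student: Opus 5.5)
We will work locally near the cusp $\infty$ and use the local parameter $t$ at the origins of $E'$ and $E$. By Lemma~\ref{lem:lift}, $\psi$ is a holomorphic function of $q$ near $q=0$ with $\psi(0)=0$. Hence $t_\psi=t\circ\psi$ is a convergent power series in $q$ with no constant term. On a small disc around $0\in E'(\C)$ the map $\nu$ is a biholomorphism onto a disc around $0\in E(\C)$, because $\nu$ is étale. In the parameters $t$ this biholomorphism is given by the convergent series $\widehat\nu(w)=dw+\cdots$ with $d\neq0$.

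Since $\nu\circ\psi=\varphi$, we get $\widehat\nu(t_\psi)=t_\varphi$ as convergent series in $q$ for $|q|$ small. The series $\widehat\nu$ has nonzero linear coefficient, so it has a compositional inverse $\widehat\nu^{-1}\in\Q[[w]]$. This follows from Lemma~\ref{lem:inverse} applied to $d^{-1}\widehat\nu$ over $A=\Q$, followed by rescaling. The inverse converges near $0$ and inverts $\widehat\nu$ analytically there, so $t_\psi=\widehat\nu^{-1}(t_\varphi)$ as formal power series.

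Next we check rationality. The series $\widehat\nu$ lies in $\Q[[w]]$ because $\nu$ is defined over $\Q$ and $t$ comes from Weierstrass equations over $\Q$. The series $t_\varphi$ lies in $q\Q[[q]]$ because $\varphi$ is defined over $\Q$, $\varphi(\infty)=0$, and the completion of $X_\Gamma$ at the rational cusp $\infty$ is $\Spf\Q[[q]]$. Concretely, $t\circ\varphi$ is a rational function on $X_\Gamma$ over $\Q$ vanishing at $\infty$. Therefore $t_\psi\in q\Q[[q]]$. It is nonzero: if it vanished, then $t_\varphi=\widehat\nu(0)=0$, so $\varphi$ would be constant near $\infty$ and hence constant, contrary to hypothesis.

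Finally, $x=t^{-2}u(t)$ and $y=-t^{-3}u(t)$ for a series $u\in\Q[[t]]$ with $u(0)=1$; this is the standard expansion from the Weierstrass equation. Substituting the nonzero series $t_\psi\in q\Q[[q]]$ gives Laurent series in $\Q((q))$. The main point needing care is identifying the analytic identity $\widehat\nu(t_\psi)=t_\varphi$ with a formal one. This holds because everything converges near $q=0$ and $t_\psi(0)=0$, so composition of convergent series agrees with formal composition.
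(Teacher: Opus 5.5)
Your proposal is correct and follows essentially the same route as the paper: you derive $\widehat\nu(t_\psi)=t_\varphi$ near $q=0$ from Lemma~\ref{lem:lift}, invert $\widehat\nu$ over $\Q$, get rationality of $t_\varphi$ from the rational cusp $\infty$, and substitute into the Weierstrass expansions of $x$ and $y$. Your remarks identifying convergent composition with formal composition, and rescaling before applying Lemma~\ref{lem:inverse}, make explicit points that the paper leaves implicit.
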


\begin{proof}
By Lemma~\ref{lem:lift}, $\psi$ is a holomorphic function of $q$ near $q=0$
with value $0$ at $q=0$, so $\widehat\nu(t_\psi)=t_\varphi$ near $q=0$. Since
$\varphi$, the cusp $\infty$ and the parameter $q$ are defined over $\Q$, we
have $t_\varphi\in q\,\Q[[q]]$, and $t_\varphi\neq0$ because $\varphi$ is not
constant. As $\widehat\nu$ has nonzero linear coefficient, it has a
compositional inverse over $\Q$, and $t_\psi=\widehat\nu^{-1}(t_\varphi)$.
Substituting into $x(t)\in t^{-2}\Z[[t]]$ and $y(t)\in t^{-3}\Z[[t]]$ gives
the last assertion.
\end{proof}

The following argument is from \cite[proof of Prop.~2]{Edixhoven1991} and \cite[proof of Thm.~1.6]{Stevens1989}; see also \cite[Thm.~7.2]{Yin2026}.
\begin{lemma}\label{lem:integral-param}
Let $V$ be $X_0(N)$ or $\Xmu$, let $E$ be an elliptic curve over $\Q$, and let
$\varphi:V\to E$ be a morphism over $\Q$ with $\varphi(\infty)=0$. Then
$t_\varphi\in q\,\Z[[q]]$.
\end{lemma}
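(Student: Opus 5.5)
The plan is to spread $\varphi$ out over integral models and use that the cusp $\infty$ extends to a $\Z$-point whose formal neighbourhood is $\Spf\Z[[q]]$. Let $\mathcal V$ be the normal integral model of $V$ over $\Z$: for $V=\Xmu$ the Katz--Mazur compactified moduli space of $\mu_N$-structures, for $V=X_0(N)$ the coarse space of the $\Gamma_0(N)$-moduli problem. In both cases $\infty$ extends to a section $\infty_\Z\in\mathcal V(\Z)$ in the smooth locus, and the completion of $\mathcal V$ along it is $\Spf\Z[[q]]$, via the Tate curve $\mathrm{Tate}(q)$ over $\Z((q))$ with the standard level structure, by \cite[Thm.~8.11.10]{KM1985} and its $\Gamma_0(N)$ analogue. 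Fix $\cE$, the N\'eron model of $E$ over $\Z$, and $\widehat E$, its formal group in the parameter $t$ of the global minimal model; $\widehat E$ is the completion of $\cE^0$ along $0$.

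First I will extend $\varphi$ over the smooth locus. Let $\mathcal V^{\mathrm{sm}}$ be the smooth locus of $\mathcal V\to\Spec\Z$. By the N\'eron mapping property, applied to the smooth $\Z$-scheme $\mathcal V^{\mathrm{sm}}$, or more elementarily by the fact that a rational map from a smooth (hence regular) scheme to a N\'eron model is defined in codimension one and then everywhere, $\varphi$ extends to a morphism $\varphi_\Z:\mathcal V^{\mathrm{sm}}\to\cE$. One can also extend over the codimension-one points and then use Weil's extension theorem for morphisms to group schemes.

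Second I will restrict to the cusp. The section $\infty_\Z$ lies in $\mathcal V^{\mathrm{sm}}$, and $\varphi_\Z(\infty_\Z)$ is a $\Z$-point of $\cE$ whose generic fibre is $0$, so it is the zero section. Thus $\varphi_\Z$ induces a map of formal completions
\[
\Spf\Z[[q]]\longrightarrow\widehat{\cE}_0 .
\]
Since the fibres of $\mathcal V$ above $\infty_\Z$ are connected through the Tate curve point, and $q=0$ maps into the identity, this map lands in the identity component, that is, in $\widehat E=\Spf\Z[[t]]$. Hence $t\circ\varphi_\Z=t_\varphi$ lies in $\Z[[q]]$. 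It vanishes at $q=0$, so $t_\varphi\in q\,\Z[[q]]$.

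The main obstacle is the first step at primes $p\mid N$ with $p^2\mid N$. There the model is not smooth everywhere, and one must check that $\infty_\Z$ actually lies in the smooth locus of the fibre over $\F_p$. For $\Xmu$ this follows because the $\infty$ cusp corresponds to the \'etale-multiplicative component, whose completion is $\Z[[q]]$ by Katz--Mazur. For $X_0(N)$ I would argue via the coarse-space description near $\infty$, where the $q$-expansion principle still gives $\Z[[q]]$, as in \cite{Edixhoven1991}. If the extension to the smooth locus proves delicate, I would fall back on the argument of \cite[Thm.~1.6]{Stevens1989}: extend $\varphi$ only on the formal neighbourhood $\Spf\Z_p[[q]]$, which is regular of dimension two, by the N\'eron property over the regular local ring $\Z_p[[q]]$ in codimension one. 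Purity then extends the map over the closed point, giving integrality prime by prime.
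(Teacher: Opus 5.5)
Your proposal is correct and follows essentially the same route as the paper: extend $\varphi$ by the N\'eron mapping property over a smooth integral model in which the cusp $\infty$ is a section with completion $\Spf\Z[[q]]$ (the paper works one prime at a time over $\Z_p$, using the smooth locus of \v{C}esnavi\v{c}ius's model of $X_0(N)$ and the smooth scheme $\mathcal X_\mu(N)$ of \cite{CES}), observe that the cusp maps to the zero section, and read off $t_\varphi$ from the induced map of completions. Your identity-component step is superfluous, as is the fallback via purity: the completion of $\cE$ along its zero section is already $\Spf\Z[[t]]$, because $\cE^0$ is an open neighbourhood of the zero section, so no connectedness argument is needed.
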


\begin{proof}
We may assume that $\varphi$ is not constant. Fix a prime $p$. If
$V=X_0(N)$, let $U$ be the smooth locus of $X_0(N)_{\Z_p}$, where
$X_0(N)_\Z$ is the modular curve over $\Z$ of
\cite[Section~1.5]{Cesnavicius2018}. The cusp $\infty$ extends to a section
of $U$ \cite[Rem.~5.3]{Cesnavicius2018}, and the completion of $U$ along it is
$\Spf\Z_p[[q]]$, where $q$ is the analytic parameter at $\infty$
\cite[proof of Prop.~2]{Edixhoven1991}. If $V=\Xmu$, let
$U=\mathcal X_\mu(N)_{\Z_p}$, where $\mathcal X_\mu(N)$ is the coarse moduli
scheme over $\Z$ of generalized elliptic curves with irreducible geometric
fibres equipped with a closed immersion of $\mu_N$ into the smooth locus. By
\cite[proof of Lemma~6.1.6]{CES}, $\mathcal X_\mu(N)$ is smooth over $\Z$, the
cusp $\infty$ extends to a $\Z$-point of it, and the completion along this
point is $\Spf\Z[[q]]$, where $q$ is the analytic parameter at $\infty$. Its
generic fibre is the open subscheme of $\Xmu$ obtained by removing the cusps
other than the $\diam\delta\infty$.

In both cases the N\'eron mapping property
\cite[Section~1.2, Def.~1]{BLR1990} extends $\varphi$ to a morphism
$U\to\cE$. The section $\infty$ of $U$ maps to a section of $\cE$
which is zero on the generic fibre, hence is the zero section, as $\cE$ is
separated. Therefore the composite $\Spf\Z_p[[q]]\to U\to\cE$ factors through
the completion $\Spf\Z_p[[t]]$ of $\cE$ along its zero section, and
$t_\varphi\in q\,\Z_p[[q]]$. The coefficients of $t_\varphi$ are rational
because $\varphi$, the cusp and the parameter $q$ are defined over $\Q$. As
$p$ was arbitrary, $t_\varphi\in q\,\Z[[q]]$.
\end{proof}

If $t_\psi\in q+q^2\Z[[q]]$ for a morphism $\psi$ to an elliptic curve with
global minimal coordinates $x,y$, then $x\circ\psi$ and $y\circ\psi$ lie in
$\Z((q))$, because $x(t)\in t^{-2}+t^{-1}\Z[[t]]$ and
$y(t)\in-t^{-3}+t^{-2}\Z[[t]]$. We write $\ell_f(q)=\sum_{n\geq1}(a_n/n)q^n$,
so that $\omega_f=\dd\ell_f$.

\begin{proposition}\label{prop:integral-root}
Let $V$ be $X_0(N)$ or $\Xmu$, and let $E/\Q$ be an elliptic curve of
conductor $N$ with newform $f$. Let $\varphi:V\to E$ be a morphism over $\Q$
with $\varphi(\infty)=0$ and
\[
  \varphi^*\omega_E=c\,\omega_f
\]
for an integer $c\geq1$ such that $p^2\mid N$ for every prime $p\mid c$, and
let $\psi$ be the normalized lift of $\varphi$ through $[c]$. Then
\[
  t_\psi=L_E^{-1}(\ell_f)\in q+q^2\Z[[q]],
\]
and $x\circ\psi$, $y\circ\psi$ lie in $\Z((q))$.
\end{proposition}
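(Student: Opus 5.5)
We first identify $t_\psi$ with $L_E^{-1}(\ell_f)$ over $\Q$, and then prove integrality one prime at a time.

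\textbf{Step 1: the identity $t_\psi=L_E^{-1}(\ell_f)$.} By Lemma~\ref{lem:lift-expansion} with $\nu=[c]$ and $E'=E$, we have $t_\psi=\widehat{[c]}^{-1}(t_\varphi)\in q\,\Q[[q]]$. Over $\Q$ the logarithm linearizes the formal group, so $\widehat{[c]}(w)=L_E^{-1}(cL_E(w))$. Hence $L_E(t_\psi)=\tfrac1c L_E(t_\varphi)$. Next we compute $L_E(t_\varphi)$. The expansion of $\omega_E$ in $t$ is $\dd L_E(t)$, so $\varphi^*\omega_E=\dd L_E(t_\varphi)$. This equals $c\,\omega_f=c\,\dd\ell_f$. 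Both $L_E(t_\varphi)$ and $c\,\ell_f$ lie in $q\,\Q[[q]]$, so they have no constant terms and therefore $L_E(t_\varphi)=c\,\ell_f$. It follows that $L_E(t_\psi)=\ell_f$, i.e.\ $t_\psi=L_E^{-1}(\ell_f)$. Since $a_1=1$, this series starts with $q$.

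\textbf{Step 2: integrality at primes $p\nmid c$.} Fix such a $p$. By Lemma~\ref{lem:integral-param}, $t_\varphi\in q\,\Z[[q]]\subseteq q\,\Z_p[[q]]$. The series $\widehat{[c]}(w)=cw+\dots$ lies in $\Z_p[[w]]$ because $\widehat E$ is a formal group over $\Z_p$. Its linear coefficient $c$ is a unit in $\Z_p$. After rescaling, Lemma~\ref{lem:inverse} shows that $\widehat{[c]}^{-1}$ has coefficients in $\Z_p$. Therefore $t_\psi=\widehat{[c]}^{-1}(t_\varphi)\in q\,\Z_p[[q]]$.

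\textbf{Step 3: integrality at primes $p\mid c$.} Here the hypothesis gives $p^2\mid N$, so $E$ has additive reduction at $p$. By Lemma~\ref{lem:additive}, $L_E^{-1}$ has coefficients in $\Z_p$. It remains to show $\ell_f\in q\,\Z_p[[q]]$, which is the real content of the proof, since a priori $a_n/n$ need not be $p$-integral. Because $p\mid N$, the $p$-part of $n$ only enters through the Hecke relation $a_{pm}=a_pa_m$. Additive reduction at $p$ means $p^2\mid N$, so $a_p=0$. Thus $a_n=0$ whenever $p\mid n$, and for $p\nmid n$ the quotient $a_n/n$ lies in $\Z_p$. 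Hence $\ell_f\in q\,\Z_p[[q]]$, and so $t_\psi=L_E^{-1}(\ell_f)\in q\,\Z_p[[q]]$. This step is cheap given Lemma~\ref{lem:additive}, which already carries the main difficulty.

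\textbf{Conclusion.} For every prime $p$ the series $t_\psi$ lies in $q\,\Z_p[[q]]$, and its coefficients are rational, so $t_\psi\in q\,\Z[[q]]$. Its leading coefficient is $1$, so $t_\psi\in q+q^2\Z[[q]]$. By the remark preceding the proposition, $x\circ\psi$ and $y\circ\psi$ then lie in $\Z((q))$.
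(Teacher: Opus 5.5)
Your proposal is correct and follows the paper's proof essentially step for step. The paper also derives $L_E(t_\psi)=\ell_f$ from $L_E\circ[c]_{\widehat E}=c\,L_E$. At $p\nmid c$ it inverts the rescaled series $c^{-1}[c]_{\widehat E}$ via Lemma~\ref{lem:inverse} together with Lemma~\ref{lem:integral-param}, and at $p\mid c$ it combines $a_p=0$ (hence $\ell_f\in q\,\Z_p[[q]]$) with Lemma~\ref{lem:additive}.
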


\begin{proof}
Integrating $\varphi^*\omega_E=c\,\omega_f$ gives $L_E(t_\varphi)=c\,\ell_f$.
Since $L_E\circ[c]_{\widehat E}=c\,L_E$ and $[c]_{\widehat E}(t_\psi)=t_\varphi$
(Lemma~\ref{lem:lift-expansion}), we get $L_E(t_\psi)=\ell_f$, that is,
$t_\psi=L_E^{-1}(\ell_f)$. Its linear coefficient is $a_1=1$, and its
coefficients are rational. It remains to show that they lie in $\Z_p$ for
every prime $p$.

If $p\nmid c$, then $g(w)=c^{-1}[c]_{\widehat E}(w)$ lies in
$w+w^2\Z_p[[w]]$, so $g^{-1}$ has coefficients in $\Z_p$ by
Lemma~\ref{lem:inverse}. Since $g(t_\psi)=c^{-1}t_\varphi$, and
$t_\varphi\in q\,\Z[[q]]$ by Lemma~\ref{lem:integral-param}, we get
$t_\psi=g^{-1}(c^{-1}t_\varphi)\in q\,\Z_p[[q]]$.

If $p\mid c$, then $p^2\mid N$, so $E$ has additive reduction at $p$ and
$a_p=0$. Hence $a_n=0$ whenever $p\mid n$, and every coefficient of $\ell_f$
lies in $\Z_p$. By Lemma~\ref{lem:additive}, $t_\psi=L_E^{-1}(\ell_f)$ lies in
$q\,\Z_p[[q]]$.
\end{proof}

\begin{remark}\label{rem:linear-term}
The parameter of $\varphi$ itself is $t_\varphi=c\,q+O(q^2)$, so $x\circ\varphi$
and $y\circ\varphi$ have denominators at the primes dividing $c$, while
$x\circ\psi$ and $y\circ\psi$ do not. The integrality of $L_E^{-1}(\ell_f)$
also follows from Honda's comparison of formal groups
\cite[Thm.~9]{Honda1970}; see also \cite[Section~3]{Honda1973}.
\end{remark}

\begin{proposition}\label{prop:integral-etale}
Let $V$ be $X_0(N)$ or $\Xmu$, and let $E/\Q$ be an elliptic curve of
conductor $N$ with newform $f$. Let $\varphi:V\to E$ be a morphism over $\Q$
with $\varphi(\infty)=0$ and $\varphi^*\omega_E=\omega_f$, let
$\nu:E'\to E$ be an isogeny over $\Q$ with $\nu^*\omega_E=\omega_{E'}$, and
let $\psi$ be the normalized lift of $\varphi$ through $\nu$. 

Then $t_\psi\in q+q^2\Z[[q]]$, and $x\circ\psi$, $y\circ\psi$ lie in $\Z((q))$,
where $x,y$ are the coordinates of a global minimal Weierstrass equation of
$E'$.
\end{proposition}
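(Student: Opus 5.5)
We prove this the same way as Proposition~\ref{prop:integral-root}, with the formal logarithm replaced by the formal expansion of $\nu$. Since $\nu^*\omega_E=\omega_{E'}$, the linear coefficient $d$ of $\widehat\nu(w)$ is $1$. By Lemma~\ref{lem:lift-expansion}, $t_\psi=\widehat\nu^{-1}(t_\varphi)$. Integrating $\varphi^*\omega_E=\omega_f$ gives $L_E(t_\varphi)=\ell_f$, so $t_\varphi=q+O(q^2)$, and hence $t_\psi=q+O(q^2)$ with rational coefficients. We also have the identity $L_{E'}=L_E\circ\widehat\nu$. It follows that
\[
t_\psi=L_{E'}^{-1}(\ell_f).
\]
It remains to show that the coefficients of $t_\psi$ lie in $\Z_p$ for each prime $p$. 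Integrality of $x\circ\psi$ and $y\circ\psi$ then follows from the remark after Lemma~\ref{lem:integral-param}.

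\textbf{Case 1: $\widehat\nu$ is integral at $p$.} The isogeny $\nu$ extends to a morphism of N\'eron models $\cE'\to\cE$ over $\Z_p$, and this morphism maps $\cE'^0$ into $\cE^0$. So $\widehat\nu(w)\in w+w^2\Z_p[[w]]$, since it is a homomorphism of formal groups over $\Z_p$ given in minimal-model parameters. By Lemma~\ref{lem:inverse}, $\widehat\nu^{-1}$ then also has $\Z_p$ coefficients. Lemma~\ref{lem:integral-param} gives $t_\varphi\in q\Z[[q]]$, and therefore
\[
t_\psi=\widehat\nu^{-1}(t_\varphi)\in q\Z_p[[q]].
\]
This argument works at every prime $p$: the linear coefficient of $\widehat\nu$ is $1$, a unit, so no further hypothesis is needed.

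\textbf{Justifying the integrality of $\widehat\nu$.} The only point needing care is that the formal expansion of $\nu$ really lies in $\Z_p[[w]]$. The map $\cE'\to\cE$ exists by the N\'eron mapping property. It sends the zero section to the zero section, so it restricts to identity components, which are the smooth loci of the minimal Weierstrass models. Completing along the zero sections then gives a map $\Spf\Z_p[[t]]\to\Spf\Z_p[[t]]$, which is exactly $\widehat\nu$.

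\textbf{Main obstacle.} I expect the main difficulty to be purely bookkeeping. Unlike the $[c]$ case, we do not need Lemma~\ref{lem:additive}, because the condition $\nu^*\omega_E=\omega_{E'}$ makes $\widehat\nu$ invertible over $\Z_p$ directly. The one thing to check is that the parameters $t$ are the minimal-model parameters on both $E$ and $E'$, so that the completions of $\cE^0$ and $\cE'^0$ are $\Z_p[[t]]$ in these parameters. That is the convention fixed at the start of this section.
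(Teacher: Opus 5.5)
Your proposal is correct and is essentially the paper's argument: extend $\nu$ to N\'eron models over $\Z_p$ to get $\widehat\nu\in w+w^2\Z_p[[w]]$ (the linear coefficient is $1$ because $\nu^*\omega_E=\omega_{E'}$), invert it by Lemma~\ref{lem:inverse}, and apply the inverse to $t_\varphi\in q\,\Z[[q]]$ from Lemma~\ref{lem:integral-param}. The identity $t_\psi=L_{E'}^{-1}(\ell_f)$ and the ``Case 1'' heading are harmless but unused, since this one argument already works at every prime.
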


\begin{proof}
For every prime $p$, $\nu$ extends to a morphism of N\'eron models over $\Z_p$
preserving the zero sections, hence to a homomorphism of their formal groups.
Thus $\widehat\nu\in w\,\Z_p[[w]]$ for all $p$, and its linear coefficient is
$1$ because $\nu^*\omega_E=\omega_{E'}$. Hence
$\widehat\nu\in w+w^2\Z[[w]]$, and $\widehat\nu^{-1}$ has integral
coefficients by Lemma~\ref{lem:inverse}. By Lemma~\ref{lem:integral-param},
$t_\varphi\in q\,\Z[[q]]$, and $L_E(t_\varphi)=\ell_f$ shows that its linear
coefficient is $1$. By Lemma~\ref{lem:lift-expansion},
$t_\psi=\widehat\nu^{-1}(t_\varphi)\in q+q^2\Z[[q]]$.
\end{proof}

% !TEX root = ../rewritten.tex
\section{Modularity and rationality of normalized lifts}\label{sec:lifting}

In this section we prove Theorem~\ref{thm:lifting}: part~\ref{lift:congruence}
(Modularity) is Proposition~\ref{prop:congruence} together with
Lemma~\ref{lem:monodromy} (Section~\ref{subsec:congruence}), part~\ref{lift:modular}
(Level) is Proposition~\ref{prop:level} (Section~\ref{subsec:modularity}), and
part~\ref{lift:rational} (Rationality) is Propositions~\ref{prop:descent}
and~\ref{prop:translations} (Section~\ref{subsec:rationality}).

We keep the notation of
Section~\ref{subsec:normalized-lift}: $\psi$
is the normalized lift of $\varphi:X_\Gamma\to E$ through $\nu:E'\to E$,
$\rho:\Gamma\to\ker\nu$ is the homomorphism of Lemma~\ref{lem:rho}, and
$G_\psi=\ker\rho$. The argument follows Yin's congruence argument for cubic
roots of modular functions \cite[Prop.~8.1]{Yin2026} in an opposite direction.

For a positive integer $m$ we write
\[
  \Gamma(m)=\ker\bigl(\SL\to\mathrm{SL}_2(\Z/m\Z)\bigr),
\]
and $T=\sm{1&1}{0&1}$.

\subsection{Modularity}\label{subsec:congruence}

Let $X=X_\Gamma(\C)$, a compact Riemann surface, and let
\[
  Y=X\times_{\varphi,E(\C),\nu}E'(\C).
\]
Since $\nu$ is a finite covering, so is $Y\to X$. Thus $Y$ is a compact,
possibly disconnected, Riemann surface, unramified over $X$ at every point,
including the cusps and the images of the elliptic points. Let $Y_\psi$ be
the connected component of $Y$ containing $(\infty,0)$.

\begin{lemma}\label{lem:monodromy}
The homomorphism $\rho$ vanishes on $-I$ and on every parabolic and elliptic
element of $\Gamma$. The map $z\mapsto(z,\psi(z))$ induces an isomorphism of
compact Riemann surfaces $X_{G_\psi}(\C)=G_\psi\backslash\Hup^*\simeq Y_\psi$.
\end{lemma}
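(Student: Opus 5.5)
The plan is to prove the vanishing of $\rho$ on $-I$ and on parabolic and elliptic elements first, and then to build the isomorphism with $Y_\psi$ by covering-space arguments.

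\textbf{Vanishing of $\rho$.} Since $-I$ acts trivially on $\Hup$, we have $\psi(-Iz)=\psi(z)$, so $\rho(-I)=0$.

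For an elliptic element $\gamma\in\Gamma$, take its fixed point $z_0\in\Hup$. Evaluating $\psi(\gamma z)-\psi(z)=\rho(\gamma)$ at $z=z_0$ gives $\rho(\gamma)=0$.

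For a parabolic $\gamma$, I will first write $\pm\gamma=gT^hg^{-1}$ with $g\in\SL$, $h\neq0$, and $g\infty=s$ the fixed cusp. Then I will use that $Y\to X$ is an unramified finite covering over the cusp $s$ as well. Choose a small disc $D$ around the image of $s$ in $X$, over which $Y$ is trivial; such a $D$ exists because $\varphi$ extends holomorphically across the cusp. The horoball $B$ at $s$ mapping into $D$ is connected. The map $z\mapsto(z,\psi(z))$ sends $B$ into a single sheet over $D$, and this sheet is a section of $Y|_D$. Concretely: $\varphi$ restricted to $B$ factors through the punctured disc $D^*$, and $\nu$ is trivial over $\varphi(D)$, which for $D$ small lies in a disc where $\nu$ is trivial. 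Hence $\psi|_B$ equals $\sigma\circ\varphi$ for a local inverse $\sigma$ of $\nu$, which is $\gamma$-invariant because $\varphi$ is. So $\rho(\gamma)=0$. This is the same argument as the one in Lemma~\ref{lem:lift} at $\infty$, now transported to other cusps, and the same $\sigma\circ\varphi$ argument also covers the elliptic case.

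\textbf{The map to $Y_\psi$.} The map $\Phi:z\mapsto(z\bmod\Gamma,\psi(z))$ from $\Hup$ to $Y$ is holomorphic, and it lands in $Y_\psi$ because $\Hup$ is connected and $\Phi(z)\to(\infty,0)$ as $z\to i\infty$. By definition of $G_\psi$, $\Phi$ factors through $G_\psi\backslash\Hup$. I will extend it to the cusps using the local description above, and show that $G_\psi\backslash\Hup^*\to Y_\psi$ is a bijective holomorphic map of compact Riemann surfaces.

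For injectivity: if $\Phi(z)=\Phi(z')$, then $z'=\gamma z$ with $\gamma\in\Gamma$, and $\psi(\gamma z)=\psi(z)$ gives $\rho(\gamma)=0$, so $\gamma\in G_\psi$. At the cusps, the same holds using the sheet description.

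For surjectivity: the image is open, since the map is nonconstant, and closed, since the source is compact, so it is all of the connected $Y_\psi$.

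\textbf{Isomorphism.} A bijective holomorphic map between compact Riemann surfaces is biholomorphic.

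The main obstacle is local behavior at elliptic points and at cusps. There $G_\psi\backslash\Hup^*$ has orbifold-type local charts, and we must check that the map is not ramified. I will handle this by comparing degrees: both $G_\psi\backslash\Hup^*\to X$ and $Y_\psi\to X$ have degree $[\Gamma:\pm G_\psi]=|\rho(\Gamma)|$. For the target, the monodromy of the covering $Y\to X$ restricted to $Y_\psi$ is exactly $\rho(\Gamma)$, as in the proof of Lemma~\ref{lem:optimal}. For the source, the degree over $X$ is the index, and $G_\psi$ contains $-I$. So the induced map is a degree-one map over $X$, hence an isomorphism.
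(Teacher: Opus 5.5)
Your proof is correct and follows essentially the same covering-space argument as the paper. The $-I$ and elliptic cases match exactly, your local section $\sigma\circ\varphi$ on a horoball is the same local triviality of $Y\to X$ near a cusp that the paper expresses by lifting a null-homotopic loop, and injectivity comes from $G_\psi=\ker\rho$ in both. The only difference is packaging: you extend to the compactifications first and conclude from bijectivity, which makes your final degree-count paragraph redundant, since an injective holomorphic map of Riemann surfaces is already unramified; the paper instead shows that $G_\psi\backslash\Hup\to Y^\circ$ is an injective proper local isomorphism, using that elliptic elements lie in $G_\psi$, and then fills in the cusps.
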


\begin{proof}
The element $-I$ acts trivially on $\Hup$, so $\rho(-I)=0$. If $\gamma$ is
elliptic with fixed point $z_0\in\Hup$, then
$\rho(\gamma)=\psi(z_0)-\psi(z_0)=0$. If $\gamma$ is parabolic, it fixes a
cusp $\kappa=\sigma\infty$ with $\sigma\in\SL$, and
$\gamma=\pm\sigma T^r\sigma^{-1}$ for some $r\neq0$. For $\operatorname{Im}z'$
large, the image in $X$ of the path from $\sigma z'$ to
$\sigma(z'+r)=\gamma\sigma z'$ is a loop in a small disc around the image of
$\kappa$, hence null-homotopic in $X$. Since $Y\to X$ is a covering, its lift
$z\mapsto(z,\psi(z))$ is a closed loop, so $\rho(\gamma)=0$.

If $(z,\psi(z))$ and $(z',\psi(z'))$ have the same image in $Y$, then
$z'=\gamma z$ with $\gamma\in\Gamma$ and $\rho(\gamma)=0$, so $\gamma\in
G_\psi$. Let $Y^\circ\subseteq Y$ be the preimage of $\Gamma\backslash\Hup$.
Since every elliptic element of $\Gamma$ lies in $G_\psi$, the finite map
$G_\psi\backslash\Hup\to\Gamma\backslash\Hup$ is unramified, so
$G_\psi\backslash\Hup\to Y^\circ$ is an injective local isomorphism which is
proper, that is, an isomorphism onto a union of connected components of
$Y^\circ$. Its image is connected and contains the points $(z,\psi(z))$ with
$\operatorname{Im}z$ large, which are close to $(\infty,0)$. Hence it is
$Y_\psi\cap Y^\circ$, the complement in $Y_\psi$ of finitely many points over
the cusps. Thus $X_{G_\psi}(\C)$ and $Y_\psi$ are compact Riemann surfaces,
finite over $X$, containing isomorphic complements of finite sets compatibly
with the maps to $X$, and the isomorphism extends to them.
\end{proof}

From now on we assume \BDC{}. We use the unbounded denominators
theorem in the following form \cite[Thm.~1.0.1]{CDT}.

\begin{theorem}[Calegari--Dimitrov--Tang]\label{thm:cdt}
Let $F:\Hup\to\C$ be a modular form of some integral weight for a subgroup of
finite index in $\SL$, holomorphic on $\Hup$ and at the cusps, whose Fourier
expansion at $\infty$ lies in $\Z[[q]]$. Then $F$ is a modular form for a
congruence subgroup.
\end{theorem}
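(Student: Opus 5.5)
This is the unbounded denominators theorem, which the paper quotes from \cite[Thm.~1.0.1]{CDT} rather than proving. What follows is therefore only a sketch of how I would organize its proof, along the lines of Calegari--Dimitrov--Tang; the paper uses the result as a black box.

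\textbf{Reduction to weight zero.} Let $G$ be the finite-index group for $F$. Multiplying by a suitable power of $E_4$ or $E_6$ and dividing by a power of the discriminant $\Delta=q\prod(1-q^n)^{24}$, which has integral expansion and leading coefficient $1$, turns $F$ into a weight-zero modular function $h$ for $G$. The expansion of $h$ at $\infty$ lies in $q^{-a}\Z[[q]]$ and $h$ is meromorphic at the cusps. Since $\Delta$, $E_4$, $E_6$ are modular for $\SL$, $F$ is invariant under a congruence subgroup exactly when $h$ is, after a bookkeeping step. That step handles the finite-order multiplier that can appear when taking roots: one passes to $F^m$ and uses that the group generated by $G$ and a congruence subgroup has the same property. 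So it suffices to treat modular functions $h$ on $G\backslash\Hup$ with integral $q$-expansion.

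\textbf{Algebraicity over the $\lambda$-line.} Replace $G$ by $G\cap\Gamma(2)$. Then $h$ is an algebraic function of the Legendre modular function $\lambda$, and the corresponding cover of $X(2)\simeq\mathbb P^1$ is unramified away from $\lambda\in\{0,1,\infty\}$. In the local variable at the cusp, $\lambda/16$ is a power series in $q^{1/2}$ with integral coefficients and integral inverse. So $h$ becomes a formal power series with integer coefficients in a variable $x$ on the disc that uniformizes a neighbourhood of the cusp. Analytically, it continues along every path in $\mathbb P^1\setminus\{0,1,\infty\}$ with only finite monodromy. The goal becomes: every such integral germ with finite monodromy, unramified outside $\{0,1,\infty\}$ and of controlled pole order, already lies in the span of functions coming from some $\Gamma(M)$ with $M$ depending only on the data. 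Equivalently, the $\Z$-module $\mathcal M$ of all such integral germs has finite rank, and that rank is attained by congruence modular functions.

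\textbf{Arithmetic holonomy bound (main obstacle).} To bound the rank of $\mathcal M$ I would use an arithmetic algebraization/holonomy criterion in the spirit of Bost and André. The inputs are a multivariable analytic function $\varphi$ from the unit disc to the universal cover of $\mathbb P^1\setminus\{0,1,\infty\}$ together with the integrality of the coefficients. A Siegel-lemma and Schwarz-lemma argument then shows that the dimension of the space of such germs is at most a ratio of the form $\lim\frac{1}{\log|\varphi'(0)|}$ times an integral of the growth of $\log|\varphi|$ on the boundary circle. The hard part is making this bound \emph{sharp}. One must choose a good uniformizing map, essentially built from $\lambda$ composed with a well-chosen covering of the disc, and prove a holonomy-rank inequality with the precise constant. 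The resulting upper bound must match the explicit lower bound given by counting congruence modular functions for $\Gamma(M)$ with the same pole data. This requires a delicate estimate of boundary integrals of $\log|\lambda|$-type Nevanlinna characteristics.

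Once upper and lower bounds agree, every integral germ in $\mathcal M$ is a $\Q$-linear combination of congruence modular functions. Applying this to $h$ shows that $h$, and hence $F$, is invariant under a congruence subgroup.
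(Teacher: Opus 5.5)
You are right that the paper does not prove this statement: it quotes \cite[Thm.~1.0.1]{CDT} and only applies it, in Proposition~\ref{prop:congruence}, to forms of weight $12s$ for a group containing $-I$. At the level of the paper your proposal therefore coincides with it. The sketch you append, however, would fail at two points.

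\textbf{Weight reduction.} For odd $k$, no product $E_4^aE_6^b\Delta^{-c}$ has weight $-k$. Passing to $F^m$ does not repair this without using the theorem itself. If only $F^m$ is known to be modular for a congruence subgroup $C$, then $F|_k\gamma=\chi(\gamma)F$ on $C$ for a character $\chi$ with values in $\mu_m$. But $\ker\chi$ need not be congruence: roots of congruence functions, such as $\lambda^{1/n}$, are the classical noncongruence examples. Enlarging $G$ by a congruence subgroup does not help either, since $F$ need not be invariant under the enlarged group. The reduction must be linear in $F$. Multiply first by a nowhere-vanishing congruence form of odd weight with integral expansion and constant term $1$, for instance $\theta(\tau)^2=\eta(2\tau)^{10}\eta(\tau)^{-4}\eta(4\tau)^{-4}$ of weight one on $\Gamma_1(4)$. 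Only then multiply by $E_4^aE_6^b\Delta^{-c}$.

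\textbf{Endgame.} This is the more serious gap. You never fix the level $N$ of $G$, the lcm of its cusp widths, but the method works only at fixed level.
\begin{itemize}
\item The integral germ lives at the cusp $x=\lambda/16=0$. This point is not in $\mathbb P^1\smallsetminus\{0,1,\infty\}$, so no map into its universal cover reaches it.
\item Suppose instead that $\varphi:\mathbb D\to\mathbb C$ meets the cusps only at $z=0$. Then $\varphi$ lifts to $\Hup/\langle T^2\rangle\simeq\{|q^{1/2}|<1\}$, and Schwarz's lemma caps its conformal size at $1$. At that size no holonomy bound says anything.
\item The fix is to uniformize the orbifold with cone points of order dividing $N$ at the cusps. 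This is possible precisely because $N$ bounds the local monodromy there.
\end{itemize}
Also, a holonomy bound carries Siegel-lemma and Jensen-type losses and will not reproduce the congruence count exactly. So ``once upper and lower bounds agree'' is not a step you can carry out.

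What closes the argument is a degree comparison. The $\Q(x)$-span $L$ of all integral germs of level dividing $N$ is closed under multiplication. By the holonomy bound it is finite-dimensional over $\Q(x)$, hence a field, and it contains the congruence function field $K$. A single noncongruence integral germ would force $[L:K]\geq2$. It therefore suffices to bound $[L:\Q(x)]$ strictly below $2[K:\Q(x)]$. Reaching this factor is exactly why \cite{CDT} need a sharp holonomy bound and a carefully chosen uniformizing map.
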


The theorem allows expansions in fractional powers of $e^{\pi iz}$ but we only
need $\Z[[q]]$.

Let $j$ be the modular invariant and
$\Delta=q\prod_{n\geq1}(1-q^n)^{24}$ the cusp form of weight $12$ for $\SL$,
which has no zeros on $\Hup$.

\begin{proposition}\label{prop:congruence}
$G_\psi$ is a congruence subgroup.
\end{proposition}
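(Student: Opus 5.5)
The plan is to build from $\psi$ modular forms for $G_\psi$ with integral $q$-expansions, feed them into Theorem~\ref{thm:cdt}, and recover $G_\psi$ as the common invariance group of these forms and of a congruence subgroup.

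\textbf{Step 1: a weight-zero function and its invariance group.} By \BDC{}, $k\,x(\psi(q))\in\Z((q))$, with a pole of order $2$ at $q=0$ since $t_\psi=\widehat\nu^{-1}(t_\varphi)$ has a simple zero by Lemma~\ref{lem:lift-expansion}. The function $h=x\circ\psi$ is invariant under $G_\psi$. By Lemma~\ref{lem:monodromy} it is a meromorphic function on the compact Riemann surface $Y_\psi\simeq X_{G_\psi}(\C)$, hence meromorphic at all cusps. Its poles lie where $\psi(z)=0$, which can occur in $\Hup$ as well as at cusps.

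\textbf{Step 2: clearing the poles.} Choose $n$ large and set $F=k\,h\cdot\Delta^n$ and $F_0=\Delta^n$. These are weight-$12n$ forms for $G_\psi$. Both are holomorphic at every cusp once $n$ exceeds the pole orders there; this is possible because there are finitely many cusps of $G_\psi$ (finite index). Both have expansions at $\infty$ in $\Z[[q]]$. The difficulty is the poles of $h$ inside $\Hup$, since $\Delta$ has no zeros there. To handle this, I would replace $\Delta^n$ by $\Delta^n P(j)$, where $P\in\Z[j]$ is a monic polynomial vanishing to high order at the finitely many $j$-values of the interior poles. This requires those $j$-values to be algebraic integers, which is not clear.

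A cleaner route is to avoid poles altogether by using $\Delta^n\,\prod_{\sigma}(h\circ\sigma-j)$-type symmetric expressions, but the simplest fix is the following. The field of $G_\psi$-invariant functions is a finite extension of $\C(j)$, and $h$ satisfies a polynomial over $\C(j)$. I would instead multiply by a suitable power of $\Delta$ times $j^a$, chosen to absorb poles. Honestly, I expect the following trick to work: apply Theorem~\ref{thm:cdt} to the weight-$0$ function $g=1/(k\,h)$. Its expansion $k^{-1}t^2(\dots)$ may fail to be integral, however.

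Fallback plan: interior poles occur exactly at points $z$ with $\psi(z)=0$. At those points $\varphi(z)=0$, i.e.\ $z$ maps into $\varphi^{-1}(0)$, a finite set. Multiply by $Q(j)^m$, where $Q\in\Z[j]$ is monic and vanishes at those $j$-values. Here $\varphi^{-1}(0)$ consists of algebraic points of a curve over $\Q$, and for noncuspidal points I would need their $j$-values to be algebraic integers, which is not automatic. I would therefore instead take $Q=D\cdot\prod(j-j_i)$ with a common denominator $D$, and accept the scalar $D^m$, which keeps integrality since only a bounded integer multiple is involved.

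\textbf{Step 3: applying Theorem~\ref{thm:cdt}.} Theorem~\ref{thm:cdt} then makes $F$ and $F_0'=\Delta^n Q(j)^m$ modular for a congruence subgroup $\Gamma(M)$, so $h=F/(kF_0')$ is $\Gamma(M)$-invariant. The same argument applied to $y\circ\psi$ gives the same conclusion for it. For $\gamma\in\Gamma(M)\cap\Gamma$, the points $\psi(\gamma z)$ and $\psi(z)$ then have equal $x$ and $y$ coordinates, so they are equal away from a discrete set, hence everywhere. Therefore $\Gamma(M)\cap\Gamma\subseteq G_\psi$, and $G_\psi$ is congruence.

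\textbf{Main obstacle.} The key difficulty is Step 2, handling the interior poles while preserving integrality of the $q$-expansion.
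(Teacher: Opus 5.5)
Your fallback plan is correct and is essentially the paper's proof. The paper multiplies $k\,x\circ\psi$ and $k\,y\circ\psi$ by $\Delta^sP(j)$, where $P$ is a nonzero integer polynomial vanishing to high order at the algebraic $j$-values of the points of $\varphi^{-1}(0)$ in $\Hup$; it applies Calegari--Dimitrov--Tang to these weight-$12s$ forms for $G_\psi$, and concludes because $x,y$ separate points and $\Gamma(N)\subseteq\Gamma$. The obstacle you flag is not real: $j\in q^{-1}\Z[[q]]$ gives $P(j)\in\Z((q))$ for any $P\in\Z[X]$, monic or not (for instance a power of the product of the primitive minimal polynomials of the $j_i$), so you only need to fix $P$ and its exponent before taking $s$ large.
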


\begin{proof}
By Lemma~\ref{lem:rho}, $G_\psi$ has finite index in $\SL$. Put
$u_x=x\circ\psi$ and $u_y=y\circ\psi$. By Lemma~\ref{lem:monodromy} they are
meromorphic functions on $X_{G_\psi}(\C)$, and in particular meromorphic at
the cusps. A pole of $u_x$ or $u_y$ in $\Hup$ is a point $z$
with $\psi(z)=0$, hence with $\varphi(z)=0$, so its image in $X$ lies in the
finite set $\varphi^{-1}(0)$. These points are algebraic, because $\varphi$
is defined over $\Q$, and so are their $j$-invariants. The orders of the
poles are bounded, because there are finitely many poles modulo $G_\psi$. Choose a
nonzero polynomial $P$ with integer coefficients vanishing to sufficiently high order at all
these $j$-invariants. Then $P(j)u_x$ and $P(j)u_y$ are holomorphic on $\Hup$,
and for $s$ sufficiently large the functions
\[
  F_x=k\,\Delta^sP(j)\,u_x,\qquad F_y=k\,\Delta^sP(j)\,u_y
\]
are also holomorphic at every cusp, because $\Delta$ vanishes at every cusp.
They are modular forms of weight $12s$ for $G_\psi$, and since
$\Delta\in q\,\Z[[q]]$ and $j\in q^{-1}\Z[[q]]$, \BDC{} shows that
their expansions at $\infty$ lie in $\Z[[q]]$ for sufficiently large $s$.

By Theorem~\ref{thm:cdt} of Calegari-Dimitrov-Tang, $F_x$ and $F_y$ are modular for congruence
subgroups $C_x$ and $C_y$. The factor $k\Delta^sP(j)$ is modular of weight
$12s$ for $\SL$, so $u_x$ and $u_y$ are invariant under $C_x\cap C_y$. The
coordinates $x,y$ separate the points of $E'(\C)\smallsetminus\{0\}$, so for
$\gamma\in C_x\cap C_y$ we have $\psi\circ\gamma=\psi$ outside a discrete
set, hence everywhere. Choose $M_1\geq1$ with $\Gamma(M_1)\subseteq C_x\cap
C_y$. Since $\Gamma(N)\subseteq\Gamma$,
\[
  \Gamma(\lcm(M_1,N))\subseteq C_x\cap C_y\cap\Gamma\subseteq G_\psi .\qedhere
\]
\end{proof}

\begin{proof}[Proof of Theorem~\textup{\ref{thm:lifting}\ref{lift:congruence}}]
By Lemma~\ref{lem:rho}, $G_\psi$ is normal in $\Gamma$. It is a congruence
subgroup by Proposition~\ref{prop:congruence}, and $X_{G_\psi}(\C)\simeq Y_\psi$
by Lemma~\ref{lem:monodromy}.
\end{proof}

\subsection{Level}\label{subsec:modularity}

We first compare cusp widths, using Lemma~\ref{lem:monodromy}. Let
$G\subseteq\SL$ be a subgroup of finite index containing $-I$. If
$\sigma\in\SL$ maps $\infty$ to a cusp $\kappa$, the \emph{width} of $\kappa$
for $G$ is
\[
  w_G(\kappa)=\min\{w\in\Z_{\geq1}:\sigma T^w\sigma^{-1}\in G\},
\]
and the \emph{level} $L(G)$ of $G$ is the least common multiple of its cusp
widths \cite[Def.~4.1.2]{CDT}.

\begin{corollary}\label{cor:widths}
Every cusp has the same width for $G_\psi$ as for $\Gamma$. In particular
$L(G_\psi)=L(\Gamma)$.
\end{corollary}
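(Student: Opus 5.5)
The plan is to read everything off Lemma~\ref{lem:monodromy}, which gives $\rho(\gamma)=0$ for every parabolic element $\gamma\in\Gamma$ and also $\rho(-I)=0$, so that $-I\in G_\psi$ and cusp widths for $G_\psi$ are defined by the formula above. Fix a cusp $\kappa=\sigma\infty$ with $\sigma\in\SL$. Since $G_\psi\subseteq\Gamma$, any $w$ with $\sigma T^w\sigma^{-1}\in G_\psi$ also has $\sigma T^w\sigma^{-1}\in\Gamma$, which gives $w_\Gamma(\kappa)\le w_{G_\psi}(\kappa)$. In fact $w_\Gamma(\kappa)$ divides $w_{G_\psi}(\kappa)$, because the set of $w$ with $\sigma T^w\sigma^{-1}\in\Gamma$ is a subgroup of $\Z$.

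For the reverse inequality, put $w=w_\Gamma(\kappa)$. Then $\gamma=\sigma T^w\sigma^{-1}\in\Gamma$, and $\gamma$ is parabolic: it is conjugate to $T^w$ with $w\neq0$, has trace $2$, and is not $\pm I$. Lemma~\ref{lem:monodromy} then gives $\rho(\gamma)=0$, so $\gamma\in G_\psi$ and $w_{G_\psi}(\kappa)\le w$. Hence the two widths are equal. Since this holds for every cusp, taking least common multiples gives $L(G_\psi)=L(\Gamma)$.

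The only point to check is that the width is computed from $\sigma T^w\sigma^{-1}$ exactly, with no $\pm$ ambiguity. Both groups contain $-I$: for $\Gamma$ this follows from $\Gpm\subseteq\Gamma$, and for $G_\psi$ from $\rho(-I)=0$. So requiring $\sigma T^w\sigma^{-1}$ or $-\sigma T^w\sigma^{-1}$ to lie in the group gives the same condition, and the definition is consistent for both. I expect no real obstacle, since the substance is already contained in Lemma~\ref{lem:monodromy}.
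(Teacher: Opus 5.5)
Your proposal is correct and follows essentially the same route as the paper: $G_\psi\subseteq\Gamma$ gives $w_\Gamma(\kappa)\mid w_{G_\psi}(\kappa)$, and Lemma~\ref{lem:monodromy} puts the parabolic element $\sigma T^{w}\sigma^{-1}\in\Gamma$ into $G_\psi$, which gives the reverse divisibility. Your extra check that $-I\in G_\psi$, so that the width for $G_\psi$ is defined without sign ambiguity, is a harmless addition that the paper leaves implicit.
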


\begin{proof}
Let $\kappa=\sigma\infty$ be a cusp, with $\sigma\in\SL$, and let $w$ be the
width of $\kappa$ for $\Gamma$, so that $\gamma=\sigma T^{w}\sigma^{-1}\in\Gamma$.
Since $G_\psi\subseteq\Gamma$, the width for $\Gamma$
divides the width for $G_\psi$. Conversely, $\gamma$ is a parabolic element
of $\Gamma$, so $\rho(\gamma)=0$ by Lemma~\ref{lem:monodromy}, that is,
$\gamma\in G_\psi$. Hence the width for $G_\psi$ divides $w$.
\end{proof}

To determine the level we use Wohlfahrt's theorem \cite[Thm.~2]{Wohlfahrt1964},
in the form given at the end of the proof of \cite[Lemma~4.4.1]{CDT}: a
congruence subgroup $G$ containing $-I$ satisfies
\[
  \pm\Gamma(L(G))\subseteq G .
\]

\begin{proposition}\label{prop:level}
$\Gpm\subseteq G_\psi\subseteq\Gamma$.
\end{proposition}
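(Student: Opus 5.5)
The inclusion $G_\psi\subseteq\Gamma$ holds by definition, so the task is to show $\Gpm\subseteq G_\psi$. The plan is to combine Corollary~\ref{cor:widths} with Wohlfahrt's theorem.

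First I check that $G_\psi$ contains $-I$, so that Wohlfahrt's theorem in the quoted form applies. This holds because $\rho(-I)=0$ by Lemma~\ref{lem:monodromy}. By Proposition~\ref{prop:congruence}, $G_\psi$ is a congruence subgroup. Wohlfahrt's theorem therefore gives $\pm\Gamma(L(G_\psi))\subseteq G_\psi$, and Corollary~\ref{cor:widths} turns this into $\pm\Gamma(L(\Gamma))\subseteq G_\psi$.

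Next I bound $L(\Gamma)$. Since $\Gpm\subseteq\Gamma$, every cusp width for $\Gamma$ divides the corresponding width for $\Gpm$. The group $\Gpm$ contains $\pm\Gamma(N)$, whose cusp widths all equal $N$. Hence every width for $\Gamma$ divides $N$, so $L(\Gamma)\mid N$ and $\pm\Gamma(N)\subseteq G_\psi$.

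This alone does not give $\Gpm\subseteq G_\psi$, so the remaining step is to pass from $\pm\Gamma(N)$ to $\Gpm$. The group $\Gpm$ is generated by $\pm\Gamma(N)$ together with $T$, because $\Gamma_1(N)$ surjects onto the upper unipotent matrices in $\mathrm{SL}_2(\Z/N\Z)$ together with $\pm$. Concretely, a matrix in $\Gamma_1(N)$ reduces mod $N$ to $\sm{1&b}{0&1}$, and so equals $T^b$ times an element of $\Gamma(N)$. Now $T$ is parabolic and lies in $\Gamma$, so $T\in G_\psi$ by Lemma~\ref{lem:monodromy}. It follows that $\Gpm=\langle\pm\Gamma(N),T\rangle\subseteq G_\psi$.

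The main point to verify carefully is the compatibility of the width definitions: the $\pm$ conventions, and the fact that $L(\Gamma)$ divides $N$ rather than merely being bounded in terms of $N$. Both cusp-width inclusions follow directly from the definition of $w_G$ given above, since $\Gamma(N)$ is normal in $\SL$ and $\sigma T^N\sigma^{-1}\in\Gamma(N)$ for every $\sigma\in\SL$.
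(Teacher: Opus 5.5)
Your proof is correct and follows essentially the same route as the paper: you use $-I\in G_\psi$ and Proposition~\ref{prop:congruence}, then Wohlfahrt's theorem together with Corollary~\ref{cor:widths} to get $\pm\Gamma(N)\subseteq G_\psi$, and finally $T\in G_\psi$ with $\Gpm=\langle\pm\Gamma(N),T\rangle$. The only difference is that you use just $L(\Gamma)\mid N$, whereas the paper computes $L(\Gamma)=N$ via the width of the cusp $0$; divisibility suffices because $\Gamma(N)\subseteq\Gamma(L(\Gamma))$.
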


\begin{proof}
Every group between $\Gpm$ and $\Gamma_0(N)$ contains $\Gamma(N)$, so its cusp
widths divide $N$. The cusp $0$ has width exactly $N$ for such a group, since
$\sm{1&0}{-w&1}\in\Gamma_0(N)$ if and only if $N\mid w$. Thus $L(\Gamma)=N$,
and $L(G_\psi)=N$ by Corollary~\ref{cor:widths}. Since $G_\psi$ is a
congruence subgroup containing $-I$ (Lemma~\ref{lem:monodromy} and
Proposition~\ref{prop:congruence}), Wohlfahrt's theorem gives
$\pm\Gamma(N)\subseteq G_\psi$. The parabolic element $T$ lies in $\Gamma$,
hence in $G_\psi$. Reduction modulo $N$ identifies $\Gamma_1(N)/\Gamma(N)$
with $\{\sm{1&b}{0&1}:b\in\Z/N\Z\}$, which is generated by the image of $T$.
Hence $\Gpm=\langle\pm\Gamma(N),T\rangle\subseteq G_\psi$.
\end{proof}

\begin{proof}[Proof of Theorem~\textup{\ref{thm:lifting}\ref{lift:modular}}]
This is Proposition~\ref{prop:level} together with Lemma~\ref{lem:monodromy}.
\end{proof}

\subsection{Rationality}\label{subsec:rationality}

We will first show that the induced map from $\psi$ is algebraic by GAGA. Because it is normalized at the
rational cusp $\infty$, its graph is stable under $\operatorname{Gal}(\Qbar/\Q)$,
and it descends to $\Q$. Its translations under the diamond operators are then
values at rational cusps.

\begin{proposition}\label{prop:descent}
There is a unique morphism $\psi_\mu:\Xmu\to E'$ over $\Q$ with
$\psi_\mu(\infty)=0$ and $\nu\circ\psi_\mu=\varphi\circ\varpi_\Gamma$. Its
analytification is induced by $\psi$.
\end{proposition}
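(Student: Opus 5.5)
By Proposition~\ref{prop:level} we have $\Gpm\subseteq G_\psi$, so $\psi$ is invariant under $\Gpm$. It therefore induces a holomorphic map $\Gpm\backslash\Hup\to E'(\C)$. I would first extend this map over the cusps to a holomorphic map $\psi^{\mathrm{an}}:\Xmu(\C)\to E'(\C)$. Lemma~\ref{lem:monodromy} identifies $X_{G_\psi}(\C)$ with the compact component $Y_\psi$, and the projection $Y_\psi\to E'(\C)$ is holomorphic. Composing it with the finite map $\Xmu(\C)=\Gpm\backslash\Hup^*\to G_\psi\backslash\Hup^*$ gives the extension. By GAGA, $\psi^{\mathrm{an}}$ is the analytification of a morphism $\psi_\C:\Xmu_\C\to E'_\C$. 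By Lemma~\ref{lem:lift} it satisfies $\psi_\C(\infty)=0$, and the relation $\nu\circ\psi_\C=\varphi\circ\varpi_\Gamma$ holds because it holds on the dense set $\Hup$.

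Uniqueness comes next. Any morphism $h$ with $h(\infty)=0$ and $\nu\circ h=\varphi\circ\varpi_\Gamma$ has analytification $h^{\mathrm{an}}$, and $h^{\mathrm{an}}$ pulled back to $\Hup$ is a lift of $\varphi$ tending to $0$ at $i\infty$. By the uniqueness in Lemma~\ref{lem:lift} this pullback equals $\psi$. So $h_\C=\psi_\C$, which gives uniqueness over $\C$, hence over $\Q$.

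For descent, the morphism $\psi_\C$ is defined over some finitely generated subfield, and a standard spreading-out/specialization argument lets me take it defined over a finite Galois extension $K/\Q$. Call the resulting morphism $\psi_K$. For $\sigma\in\operatorname{Gal}(K/\Q)$, the conjugate $\psi_K^\sigma$ satisfies $\psi_K^\sigma(\infty)=0$, since $\infty$ and $0$ are rational. It also satisfies $\nu\circ\psi_K^\sigma=\varphi\circ\varpi_\Gamma$, since $\nu$, $\varphi$ and $\varpi_\Gamma$ are defined over $\Q$. By the uniqueness just established, applied over $K$ after base change to $\C$ along any embedding, $\psi_K^\sigma=\psi_K$. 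Galois descent for morphisms of quasi-projective $\Q$-schemes then gives $\psi_\mu$ over $\Q$.

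An alternative route avoids the field-of-definition step. The difference $\psi^\sigma_\C-\psi_\C$ is a morphism into $\ker\nu$, so it is constant, and it vanishes at $\infty$. Alternatively still, Lemma~\ref{lem:lift-expansion} shows that the $q$-expansions of $x\circ\psi$ and $y\circ\psi$ are rational; since $\infty$ has parameter $q$ over $\Q$, the rational functions $x\circ\psi_\C$ and $y\circ\psi_\C$ are Galois-invariant elements of $\C(\Xmu)$ and hence lie in $\Q(\Xmu)$. I expect the main (though mild) obstacle to be the careful reduction from $\C$ to $\Qbar$ or $K$. Either the rigidity argument or the $q$-expansion argument handles it.
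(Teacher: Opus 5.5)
Your proposal is correct. The construction of the complex map is the same as in the paper: invariance under $\Gpm$, extension over the cusps via $Y_\psi$, then GAGA. The rigidity behind uniqueness is also the same. The descent step, however, is organized differently.

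The paper never conjugates the complex morphism. It forms the finite \'etale $\Q$-scheme $\mathcal Z=\Xmu\times_{\varphi\varpi_\Gamma,E,\nu}E'$ and notes that the graph of $\psi$ is open and closed in $\mathcal Z_\C$, hence a connected component. Since the components of $\mathcal Z_{\Qbar}$ are geometrically connected, this component is already defined over $\Qbar$. It contains the rational point $(\infty,0)$, so it is Galois stable and descends. Thus the step you flag as the main obstacle, getting from $\C$ to a number field, comes for free from the finite \'etale structure.

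In your version that step needs slightly more care than you state, though there is no gap:
\begin{itemize}
\item Specializing a spread-out model of $\psi_\C$ gives \emph{some} morphism over a number field satisfying the two normalizations. It is your uniqueness over $\C$ that identifies its base change with $\psi_\C$, and you should say so.
\item The $\operatorname{Aut}(\C/\Q)$ variant does not really avoid a field-of-definition step. It still needs the fact that a closed subscheme of $(\Xmu\times E')_\C$ (the graph) stable under all of $\operatorname{Aut}(\C/\Q)$ is defined over $\Q$.
\end{itemize}
Both facts are standard.

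Your $q$-expansion variant is a genuinely different mechanism. It uses the $\Q$-rationality of the coefficients from Lemma~\ref{lem:lift-expansion} together with the $q$-expansion principle, instead of the rational point $(\infty,0)$. One way to run it is linear algebra in $H^0(\Xmu,\mathcal O(D))\otimes\C$ for a rational divisor $D$ containing the poles. It is concrete but depends on the explicit coordinates $x,y$. The paper's component argument is intrinsic, and it matches the component viewpoint used again in the proof of Theorem~\ref{thm:lifting}\ref{lift:rational}.
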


\begin{proof}
By Proposition~\ref{prop:level}, $\psi$ is invariant under $\Gpm$, and by
Lemma~\ref{lem:monodromy} it defines a holomorphic map
$\Xmu(\C)\to X_{G_\psi}(\C)\simeq Y_\psi\to E'(\C)$, still denoted by $\psi$.
By GAGA this map is algebraic. It satisfies
$\nu\circ\psi=\varphi\circ\varpi_\Gamma$ and $\psi(\infty)=0$.

The scheme
\[
  \mathcal Z=\Xmu\times_{\varphi\varpi_\Gamma,E,\nu}E'
\]
is finite \'etale over $\Xmu$ and defined over $\Q$, and
$(\infty,0)\in\mathcal Z(\Q)$. As $\mathcal Z_{\Qbar}$ is a smooth curve over an
algebraically closed field, its connected components are geometrically
connected, so the connected components of $\mathcal Z_\C$ are the base changes
of those of $\mathcal Z_{\Qbar}$. The graph of $\psi$ is closed, and open
because $\mathcal Z\to\Xmu$ is \'etale and the graph maps isomorphically to
$\Xmu$. Thus the graph is a connected component of $\mathcal Z_\C$, and therefore comes
from a connected component of $\mathcal Z_{\Qbar}$ containing $(\infty,0)$.
Distinct connected components are disjoint and $\operatorname{Gal}(\Qbar/\Q)$
fixes $(\infty,0)$, so this component is Galois stable. It descends to a
closed subscheme of $\mathcal Z$ mapping isomorphically to $\Xmu$, and hence
$\psi$ descends to a morphism $\psi_\mu$ over $\Q$.

Two morphisms with these properties differ by a morphism $\Xmu\to\ker\nu$
which vanishes at $\infty$. Since $\Xmu$ is geometrically connected and
$\ker\nu$ is finite, this morphism is zero.
\end{proof}

\begin{proposition}\label{prop:translations}
Let $\gamma=\sm{a&b}{Nr&d}\in\Gamma$. Then
\[
  \psi_\mu\circ\diam d=\psi_\mu+\rho(\gamma),\qquad
  \rho(\gamma)=\psi_\mu(\diam d\infty)\in E'(\Q).
\]
\end{proposition}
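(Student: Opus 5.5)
The plan is to prove the identity analytically on $\Hup$, then pass to $\Xmu$ by density, and finally read off rationality from Proposition~\ref{prop:descent}.

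\textbf{Step 1: the identity on $\Hup$.} Recall that $\gamma=\sm{a&b}{Nr&d}\in\Gamma\subseteq\Gamma_0(N)$ acts on $\Xmu(\C)=\Gpm\backslash\Hup^*$ as the diamond operator $\diam d$. So if $\pi_\mu:\Hup\to\Xmu(\C)$ is the projection, then $\pi_\mu(\gamma z)=\diam d\,\pi_\mu(z)$. By Proposition~\ref{prop:descent}, the analytification of $\psi_\mu$ is induced by $\psi$, that is, $\psi_\mu(\pi_\mu(z))=\psi(z)$. Hence
\[
\psi_\mu(\diam d\,\pi_\mu(z))=\psi(\gamma z)=\psi(z)+\rho(\gamma)=\psi_\mu(\pi_\mu(z))+\rho(\gamma),
\]
using Lemma~\ref{lem:rho}.

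\textbf{Step 2: extend to $\Xmu$.} Since $\pi_\mu(\Hup)$ is dense in $\Xmu(\C)$, the two morphisms $\psi_\mu\circ\diam d$ and $\psi_\mu+\rho(\gamma)$ (translation by the constant point $\rho(\gamma)\in E'(\C)$) agree on $\Xmu(\C)$. They therefore agree as morphisms over $\C$.

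\textbf{Step 3: evaluate at the cusp.} Evaluating at $\infty$ and using $\psi_\mu(\infty)=0$ gives $\rho(\gamma)=\psi_\mu(\diam d\infty)$. Alternatively, this value is the limit of $\psi(\gamma z)-\psi(z)$ as $z\to i\infty$, which is consistent with continuity of $\psi_\mu$ at the cusps.

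\textbf{Step 4: rationality.} The morphism $\diam d$ is an automorphism of $\Xmu$ over $\Q$, the cusp $\infty$ is a $\Q$-point, and $\psi_\mu$ is defined over $\Q$. Hence $\diam d\infty$ is a rational cusp and $\psi_\mu(\diam d\infty)\in E'(\Q)$. Consequently the identity $\psi_\mu\circ\diam d=\psi_\mu+\rho(\gamma)$ holds over $\Q$: both sides are $\Q$-morphisms that agree after base change to $\C$.

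The main point needing care is Step 1. One must check that the identification of the action of $\gamma$ with $\diam d$ is compatible with the chosen analytic uniformization $z\mapsto(\C/(\Z z+\Z),\,e^{2\pi i/N}\mapsto1/N)$. This is exactly the computation given in Section~\ref{subsec:normalized-lift}: $w\mapsto(Nrz+d)w$ sends $1/N$ to $d/N$. One also needs the sign convention: $\diam{-1}$ is trivial, so it is harmless that $\pm d$ is only defined up to sign. Beyond that, the argument is the density and extension step, which is routine.
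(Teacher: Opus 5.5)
Your proof is correct and uses the same ingredients as the paper: $\gamma$ acts on $\Xmu(\C)$ as $\diam d$, you evaluate at the rational cusp $\infty$, and rationality follows because $\psi_\mu$, $\diam d$ and $\infty$ are defined over $\Q$. The only difference is in how constancy is obtained. The paper gets it algebraically: $\varpi_\Gamma\circ\diam d=\varpi_\Gamma$, so $\psi_\mu\circ\diam d-\psi_\mu$ maps the geometrically connected $\Xmu$ into the finite $\ker\nu$, and only afterwards is the constant identified with $\rho(\gamma)$; you instead take constancy from Lemma~\ref{lem:rho} on $\Hup$ and extend it to $\Xmu(\C)$ by density.
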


\begin{proof}
The class of $d$ lies in $\Delta_\Gamma$, so
$\varpi_\Gamma\circ\diam d=\varpi_\Gamma$, and
$\psi_\mu\circ\diam d-\psi_\mu$ is a morphism from $\Xmu$ to $\ker\nu$. As
$\Xmu$ is geometrically connected, it is constant, equal to its value
$\psi_\mu(\diam d\infty)$ at $\infty$. This value is a rational point because
$\psi_\mu$, $\diam d$ and $\infty$ are defined over $\Q$. Since $\gamma$ acts
on $\Xmu(\C)$ as $\diam d$ (Section~\ref{subsec:normalized-lift}), this constant is
$\psi(\gamma z)-\psi(z)=\rho(\gamma)$.
\end{proof}

\begin{proof}[Proof of Theorem~\textup{\ref{thm:lifting}\ref{lift:rational}}]
The first assertions are Propositions~\ref{prop:descent}
and~\ref{prop:translations}. Since $\diam{-1}$ is the identity,
$\tau_{-d}=\tau_d$, so $H_\psi=\rho(\Gamma)=\{\tau_\delta:\delta\in\Delta_\Gamma\}$.
The component $Y_\psi$ is the image of $\Xmu(\C)$ under
$(\varpi_\Gamma,\psi_\mu)$, a morphism over $\Q$, so it descends to a connected
component $Y_\psi^{\Q}$ of the finite \'etale $X_\Gamma$-scheme
$X_\Gamma\times_{\varphi,E,\nu}E'$. Over $\C$, the subgroup of $\ker\nu$
preserving $Y_\psi$ is $H_\psi$, which acts simply transitively on the fibres
of $Y_\psi\to X$. Therefore $Y_\psi^{\Q}\to X_\Gamma$ is Galois, and its automorphisms
over $X_\Gamma$ are the translations by $H_\psi$. These points are rational, so
the automorphism group scheme is the constant group $H_\psi$. Finally $\rho$
kills $\Gpm$ and the parabolic and elliptic elements of $\Gamma$ by
Lemma~\ref{lem:monodromy} and Proposition~\ref{prop:level}, so
$H_\psi=\rho(\Gamma)$ is a quotient of $\Gamma/\Gamma'$.
\end{proof}

\begin{remark}\label{rem:dimension}
The assumption that $E$ and $E'$ are elliptic curves is used in
Proposition~\ref{prop:congruence}, where $x,y$ separate the points of
$E'(\C)\smallsetminus\{0\}$, and in the local arguments of
Lemmas~\ref{lem:additive} and~\ref{lem:torsion}. For an abelian variety one
could instead impose bounded denominators on the compositions of the lift with
finitely many rational functions separating the points of an open subset met
by the lift. The natural analogues of the Manin constant conjecture for
optimal quotients of higher dimension fail in general
\cite[Thm.~5.10]{Cesnavicius2018}.
\end{remark}

% !TEX root = ../rewritten.tex
\begingroup
\setlength{\emergencystretch}{2em}
\section{Igusa curves and the local torsion}
\label{sec:c-two}

Let $E/\Q$, $N$, and $c$ be as in Section~\ref{sec:deduction}. By
Proposition~\ref{prop:rational-torsion}, there are a morphism
$\psi_\mu:\Xmu\to E$ over $\Q$ and rational points $\tau_\delta\in E(\Q)$
such that
\begin{equation}\label{eq:inputs}
 \psi_\mu\circ\diam{\delta}=\psi_\mu+\tau_\delta,
 \qquad \{\tau_\delta:\delta\in(\Z/N\Z)^\times\}=E[c].
\end{equation}
Moreover $c\in\Z_{\geq1}$ and $p^2\mid N$ for every prime $p\mid c$ by
\eqref{eq:support}. In this section we prove
Proposition~\ref{prop:c-one}, which says that no prime divides $c$. This is stronger than the remaining $c\neq 2$.

Fix a prime $p\mid c$. The main point is that every $\tau_\delta$ lies in
$E^0(\Q_p)$, the group of points reducing into the identity component of
the N\'eron model (Proposition~\ref{prop:identity}). Since $E$ has
additive reduction at $p$, the group $E^0(\Q_p)[p]$ has order at most $p$
(Lemma~\ref{lem:torsion}). On the other hand $E[p]\subseteq E[c]$ consists
of $p^2$ points of $E^0(\Q_p)$, which is a contradiction.

To prove Proposition~\ref{prop:identity}, we construct a smooth scheme $S$ over the ring of integers $\mathcal O_K$ of an
unramified extension $K/\Q_p$ for every prime $p\mid N$, together with a morphism
$\jmath:S_K\to\Ymu_K$ and lifts of the diamond operators to $S$
(Section~\ref{subsec:moduli-objects}). Here $\Ymu\subset\Xmu$ is the
complement of the cusps. The geometric special fibre of $S$ is an open
subscheme of an Igusa curve, and it is therefore connected
(Theorem~\ref{thm:geometry}). We extend $\psi_\mu\circ\jmath$ to the
N\'eron model of $E$ over $\mathcal O_K$. A point of the special fibre of
$S$ and its diamond translate then map to the same component of the
special fibre of the N\'eron model, so $\tau_\delta$ reduces into the
identity component.

The proof in Section~\ref{sec:deduction} that $c\mid2$ uses only the
rationality of the points $\tau_\delta$. The argument of this section also
uses the geometry of the special fibre of $S$. We expect this to matter
when the method is extended to general number fields, where analogues of the case
$c=2$ occur more often, and when it is applied to other problems related to
Theorem~\ref{thm:lifting}.

\subsection{The moduli problems}
\label{subsec:moduli-objects}

Fix $p\mid N$ and write $N=p^aM$, where $a\geq1$ and $p\nmid M$.
Choose an integer $n\geq3$, prime to $p$ and divisible by $M$.
Choose a primitive $n$-th root of unity
$\zeta\in\overline{\Q}_p$, and put
\[
 K=\Q_p(\zeta),\qquad \mathcal O_K=\Z_p[\zeta].
\]
Write $k_0$ for the residue field of $\mathcal O_K$, and put $k=\bar k_0$.

We use the conventions of \cite[Sections~4.1--4.3 and~4.13]{KM1985}.
The objects of $(\mathrm{Ell}/\mathcal O_K)$ are elliptic curves $A/W$
over variable $\mathcal O_K$-schemes, and its morphisms are cartesian squares
over $\mathcal O_K$.
A moduli problem $\mathcal F$ is a contravariant set-valued functor
on this category. Thus $\mathcal F(A/W)$ is a set of additional
structures on the \emph{fixed} elliptic curve $A/W$.
When $\mathcal F$ is representable, we write
$\mathfrak M(\mathcal F)$ for its moduli scheme, following
\cite[(4.3.1)]{KM1985}:
its $W$-points are isomorphism classes of elliptic curves over $W$
equipped with the specified structure.
The notation $\mathcal A$ will denote a universal elliptic curve,
while $E/\Q$ remains the fixed target elliptic curve.

We first recall the inclusion moduli problem of
\cite[Section~4.9]{KM1985}, restricted to $\Q$-schemes.

\begin{definition}\label{def:mu-level}
Let $W$ be a $\Q$-scheme. A $\mu_N$-level object over $W$
is a pair
\[
 (A/W,\ \iota),
\]
where
\begin{itemize}[label=$\circ$,leftmargin=1.8em,itemsep=3pt,topsep=4pt]
\item $A/W$ is an elliptic curve;
\item $\iota:\mu_{N,W}\hookrightarrow A[N]$ is a closed immersion
of $W$-group schemes.
\end{itemize}
An isomorphism from $(A/W,\iota)$ to $(A'/W,\iota')$ is an
isomorphism $g:A\to A'$ over $W$ such that $g\circ\iota=\iota'$.
On $(\mathrm{Ell}/\Q)$, the corresponding moduli problem assigns
to a fixed $A/W$ the set
$\Incl_{W\text{-gp}}(\mu_{N,W},A[N])$.
\end{definition}

As defined in Section~\ref{subsec:normalized-lift}, $\Xmu$ is the compactified moduli space
of these pairs. Their coarse moduli scheme is therefore $\Ymu$,
since the cusps are precisely the reduced boundary of the
compactification
\cite[(8.6.3.2)]{KM1985}.
The diamond operator acts by
\[
 \diam{\delta}(A,\iota)=(A,\iota\circ[\delta]).
\]

To obtain a universal elliptic curve, we introduce an auxiliary
full level with fixed Weil pairing.

\begin{definition}\label{def:fixed-pairing}
Let $W$ be an $\mathcal O_K$-scheme. A $\mathcal P$-object
over $W$ is a triple
\[
 (A/W,\ P,\ Q),
\]
where
\begin{itemize}[label=$\circ$,leftmargin=1.8em,itemsep=3pt,topsep=4pt]
\item $A/W$ is an elliptic curve;
\item $P,Q\in A[n](W)$, and the homomorphism
\[
 \alpha_{P,Q}:(\Z/n\Z)^2_W\longrightarrow A[n],
 \qquad (r,s)\longmapsto[r]P+[s]Q
\]
is an isomorphism;
\item $e_n(P,Q)=\zeta$.
\end{itemize}
An isomorphism of such triples is an isomorphism of elliptic curves
over $W$ carrying each basis point to its counterpart.
The corresponding moduli problem $\mathcal P$ on
$(\mathrm{Ell}/\mathcal O_K)$ assigns to a fixed $A/W$ the set of
pairs $(P,Q)$ with these properties.
\end{definition}

Katz--Mazur write $[\Gamma(n)]$ for the moduli problem of
Drinfeld $n$-bases \cite[Section~3.1]{KM1985}.
As $n$ is invertible in $\mathbb{Z}_p$, these are the ordered bases above without
the pairing condition \cite[Section~4.6]{KM1985}.
Their superscript $\mathit{can}$ requires the Weil pairing to equal
the primitive root supplied by the $\Z[\zeta_n]$-base structure
\cite[9.1.5, 9.1.6 and 9.4.3.1]{KM1985}.
Thus $\mathcal P$ is obtained from $[\Gamma(n)]^{\mathrm{can}}$
by inverting $n$ and making the base change
\[
 \Z[\zeta_n,1/n]\longrightarrow\mathcal O_K,
 \qquad\zeta_n\longmapsto\zeta.
\]
Put $B=\mathfrak M(\mathcal P)$.
Since $n\geq3$, the moduli problem $[\Gamma(n)]$ over
$\Z[1/n]$ is represented by a smooth affine curve $Y(n)$
\cite[Cor.~4.7.2]{KM1985}.
Fixing the pairing selects an open and closed subscheme of
$Y(n)_{\mathcal O_K}$, so $B/\mathcal O_K$ is a smooth fine moduli scheme.
Write $\mathcal A/B$ for its universal elliptic curve.

Let $U\subset B$ be the open subscheme obtained by removing the
supersingular points of the special fibre
\cite[Lemmas~12.5.2 and 12.5.4]{KM1985}. In particular,
\[
 U_K=B_K,\qquad U_k=(B_k)^{\mathrm{ord}}.
\]

\begin{definition}\label{def:inclusion}
Let $W$ be an $\mathcal O_K$-scheme. An $\mathcal S$-object over $W$
is a quadruple
\[
 (A/W,\ P,\ Q,\ \iota_{p^a}),
\]
where
\begin{itemize}[label=$\circ$,leftmargin=1.8em,itemsep=3pt,topsep=4pt]
\item $(A/W,P,Q)$ is a $\mathcal P$-object whose classifying map
$W\to B$ factors through $U$;
\item $\iota_{p^a}:\mu_{p^a,W}\hookrightarrow A[p^a]$ is a closed immersion
of $W$-group schemes.
\end{itemize}
An isomorphism of $\mathcal S$-objects is an isomorphism $g$ of the
underlying $\mathcal P$-objects satisfying $g\circ\iota_{p^a}=\iota'_{p^a}$.
We write $\mathcal S$ for the corresponding moduli problem on
$(\mathrm{Ell}/\mathcal O_K)$, and put $S=\mathfrak M(\mathcal S)$.
\end{definition}

Since $B$ represents $\mathcal P$, an $\mathcal S$-object over $W$ amounts
to a morphism $W\to U$ together with a closed immersion
$\iota_{p^a}:\mu_{p^a,W}\hookrightarrow\mathcal A_W[p^a]$, where $\mathcal A_W$ is
the pullback of the universal curve. By \cite[Section~4.9]{KM1985}, the
functor
\[
 W\longmapsto
 \Incl_{W\text{-gp}}(\mu_{p^a,W},\mathcal A_W[p^a])
\]
on $U$-schemes is represented by an affine \'etale $U$-scheme. This is $S$,
and the projection $S\to U$ forgets $\iota_{p^a}$. Here fppf injectivity is equivalent to a closed
immersion, since the group schemes are finite.
As $U$ is noetherian, $S\to U$ is of finite presentation,
so $S/\mathcal O_K$ is smooth of finite type and relative dimension one.

\begin{construction}
\label{con:modular-map}
We now explain how the structures defining $S$ determine a
point of the given modular curve. Let $W$ be a $K$-scheme,
and let $(A/W,P,Q,\iota_{p^a})$ be an $\mathcal S$-object over $W$.
The first basis point defines an inclusion
\[
 \iota_M:\mu_{M,W}\hookrightarrow A[M],\qquad
 \iota_M(\zeta^{n/M})=[n/M]P,
\]
with the trivial interpretation when $M=1$.
Since $M$ and $p^a$ are coprime, multiplication gives an
isomorphism
\[
 \mu_{M,W}\times\mu_{p^a,W}\xrightarrow{\ \sim\ }\mu_{N,W},
 \qquad (x,y)\longmapsto xy.
\]
Under this isomorphism, the homomorphism
$(x,y)\mapsto\iota_M(x)+\iota_{p^a}(y)$ defines
$\iota_N:\mu_{N,W}\hookrightarrow A[N]$.
It is a closed immersion because the two image subgroups have
coprime orders. We define
\begin{equation}\label{eq:modular-map}
 \begin{aligned}
 \jmath:S_K&\longrightarrow\Ymu_K\subset\Xmu_K,\\
 (A/W,P,Q,\iota_{p^a})&\longmapsto(A/W,\iota_N).
 \end{aligned}
\end{equation}
The pair on the right denotes its coarse moduli class.
The construction commutes with base change and respects
isomorphisms, so it gives a morphism of $K$-schemes.
Thus $\jmath$ forgets the auxiliary basis after retaining the
$\mu_M$-structure determined by $P$.

The same data also allow us to lift the diamond operators to
$S$ over $\mathcal O_K$.
For $\delta\in(\Z/N\Z)^\times$, choose
$v\in(\Z/n\Z)^\times$ with $v\equiv\delta\pmod M$. Such a lift
exists because the reduction map $(\Z/n\Z)^\times\to(\Z/M\Z)^\times$
is surjective. The formula
\begin{equation}\label{eq:diamond-lift}
 \begin{aligned}
 \widetilde\delta:\ S&\longrightarrow S,\\
 (A,P,Q,\iota_{p^a})&\longmapsto
   \bigl(A,[v]P,[v^{-1}]Q,\iota_{p^a}\circ[\delta]\bigr)
 \end{aligned}
\end{equation}
defines an $\mathcal O_K$-automorphism of $S$: it preserves the pairing and
the closed-immersion condition. On the two coprime factors of the
level structure one checks
$\jmath\circ\widetilde\delta_K=\diam{\delta}\circ\jmath$.\footnote{The
automorphism $\widetilde\delta$ depends on the choice of $v$, and the choices
for different $\delta$ need not be compatible, so $\delta\mapsto\widetilde\delta$
need not be a homomorphism. Proposition~\ref{prop:identity} treats one $\delta$
at a time and uses only that $\widetilde\delta$ is an $\mathcal O_K$-automorphism
of $S$ with $\jmath\circ\widetilde\delta_K=\diam{\delta}\circ\jmath$, which holds
for every choice of $v$.}

\end{construction}

To describe the special fibre of $S$, we use the Igusa moduli problem.
For an elliptic curve $A/W$ over a $k$-scheme, let
$A^{(p^a)}=A\times_{W,F_W^a}W$, where $F_W$ is the absolute
Frobenius of $W$, and write
\[
 F^a:A\longrightarrow A^{(p^a)},\qquad
 V^a:A^{(p^a)}\longrightarrow A
\]
for the iterated relative Frobenius and its dual Verschiebung
\cite[Sections~12.1--12.2]{KM1985}.

\begin{definition}\label{def:igusa}
Let $W$ be a $k$-scheme and fix an elliptic curve $A/W$.
An Igusa structure of level $p^a$ on $A/W$, in the sense of
\cite[Def.~12.3.1]{KM1985}, is a section
\[
 Z\in A^{(p^a)}(W)
\]
satisfying
\begin{itemize}[label=$\circ$,leftmargin=1.8em,itemsep=3pt,topsep=4pt]
\item $V^a(Z)=0$;
\item the equality
\[
 \sum_{i=1}^{p^a}[iZ]=\ker V^a
\]
holds as effective Cartier divisors on $A^{(p^a)}/W$, where
$[iZ]$ is the divisor of the section $iZ$.
\end{itemize}
The divisor condition says that $Z$ generates $\ker V^a$ in the
sense of \cite[1.4.1]{KM1985}.
We write $[\Ig(p^a)]$ for the base change to $k$ of
Katz--Mazur's Igusa moduli problem on $(\mathrm{Ell}/\F_p)$
\cite[(12.3.1.1)]{KM1985}.
Its value on $A/W$ is the set of these generators.
An isomorphism of Igusa objects $(A/W,Z)\to(A'/W,Z')$ is an
isomorphism $g:A\to A'$ over $W$ with
$g^{(p^a)}(Z)=Z'$.
\end{definition}

The definition applies also at supersingular points.
On the ordinary locus it is equivalent to requiring
\[
 (\Z/p^a\Z)_W\longrightarrow\ker V^a,\qquad 1\longmapsto Z
\]
to be an isomorphism
\cite[Prop.~12.3.6 and proof of Thm.~12.6.1]{KM1985}.
To retain the auxiliary full level, let
$\mathcal P_k$ be the base change of $\mathcal P$ to $k$.
We equip the same elliptic curve with both structures, taking their
product as moduli problems in the sense of \cite[4.3.4]{KM1985}.

\begin{definition}\label{def:simultaneous}
Let $W$ be a $k$-scheme. An object of the simultaneous moduli problem
$(\mathcal P_k,[\Ig(p^a)])$ over $W$ is a quadruple
\[
 (A/W,\ P,\ Q,\ Z),
\]
where
\begin{itemize}[label=$\circ$,leftmargin=1.8em,itemsep=3pt,topsep=4pt]
\item $(A/W,P,Q)$ is a $\mathcal P_k$-object, with pairing
$e_n(P,Q)=\bar\zeta$, where $\bar\zeta$ is the image of $\zeta$ in $k$;
\item $Z$ is an Igusa structure on $A/W$ as in
Definition~\ref{def:igusa}.
\end{itemize}
An isomorphism is an isomorphism $g:A\to A'$ over $W$ satisfying
$g(P)=P'$, $g(Q)=Q'$, and $g^{(p^a)}(Z)=Z'$.
On a fixed $A/W$, the simultaneous moduli problem takes the value
\[
 (\mathcal P_k,[\Ig(p^a)])(A/W)
 =\mathcal P_k(A/W)\times[\Ig(p^a)](A/W).
\]
\end{definition}

Its moduli scheme is
\[
 C_a=\mathfrak M(\mathcal P_k,[\Ig(p^a)]).
\]
It exists because $\mathcal P_k$ is representable and
$[\Ig(p^a)]$ is relatively representable
\cite[4.3.4 and Thm.~12.6.1(1)]{KM1985}.
The finite flat map $C_a\to B_k$, of degree $p^{a-1}(p-1)$, forgets
$Z$. This moduli problem is defined over the whole $B_k$, including its
supersingular points.

\subsection{\texorpdfstring{Connectedness of the special fibre}{Connectedness of the special fibre}}
\label{subsec:igusa}

\begin{theorem}\label{thm:geometry}
There is an isomorphism of $U_k$-schemes
\begin{equation}\label{eq:classical-cover}
 S_k\simeq C_a\times_{B_k}U_k.
\end{equation}
The geometric special fibre $S_k$ is nonempty and connected.
\end{theorem}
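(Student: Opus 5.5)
The plan is to compare the two moduli problems over $U_k$, where both are relatively representable over $B_k$. Over $U_k$ the universal curve $\mathcal A$ is ordinary. We then deduce connectedness from the Katz--Mazur irreducibility theorem for Igusa curves together with an open-dense argument.

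\emph{Step 1: the isomorphism \eqref{eq:classical-cover}.} By definition, $S_k$ represents, on $U_k$-schemes $W$, the functor $\Incl_{W\text{-gp}}(\mu_{p^a,W},\mathcal A_W[p^a])$. Over an ordinary base in characteristic $p$, a closed immersion $\mu_{p^a}\hookrightarrow\mathcal A_W[p^a]$ must land in the connected part. Its image is a finite flat subgroup of order $p^a$ that is multiplicative, so it equals $\ker F^a$, the connected component of $\mathcal A_W[p^a]$ on the ordinary locus. Hence inclusions $\mu_{p^a}\hookrightarrow\mathcal A_W[p^a]$ are the same as isomorphisms $\mu_{p^a,W}\simeq\ker F^a$. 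Cartier duality identifies $\ker F^a$ with the dual of $\ker V^a\subset\mathcal A_W^{(p^a)}$. Thus isomorphisms $\mu_{p^a}\simeq\ker F^a$ correspond bijectively, and functorially in $W$, to isomorphisms $(\Z/p^a\Z)_W\simeq\ker V^a$. By the ordinary-locus description recalled after Definition~\ref{def:igusa} (\cite[Prop.~12.3.6]{KM1985}), these are exactly the Igusa structures of level $p^a$. The correspondence is compatible with the $\mathcal P_k$-data, because both sides are functors on $U_k$-schemes. Yoneda then gives $S_k\simeq C_a\times_{B_k}U_k$ over $U_k$. The one point to check is the duality pairing $\ker F^a\times\ker V^a\to\mu_{p^a}$ (the restriction of $e_{p^a}$ via $F^a$, $V^a$), which is perfect on ordinary curves, as in \cite[Section~12.2]{KM1985}.

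\emph{Step 2: nonemptiness.} The ordinary locus $U_k$ is dense in each component of the smooth curve $B_k$, since supersingular points are finite in number. The map $C_a\to B_k$ is finite flat of positive degree $p^{a-1}(p-1)$, hence surjective. Therefore $S_k$ is nonempty.

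\emph{Step 3: connectedness, the main obstacle.} We need that $C_a=\mathfrak M(\mathcal P_k,[\Ig(p^a)])$ is irreducible; this is where the fixed Weil pairing $e_n(P,Q)=\bar\zeta$ is essential. First, $B_k$ is connected: it is the fibre of $Y(n)^{\mathrm{can}}$ with a fixed $\zeta$, which is geometrically irreducible by \cite[Cor.~10.9.2]{KM1985}, or by reduction from the connected characteristic-zero curve $\Gamma(n)\backslash\Hup$ via Zariski connectedness for the smooth proper compactification. Next, Katz--Mazur \cite[Thm.~12.6.1 and Cor.~12.6.3]{KM1985} show that $C_a$ is a regular curve whose covering of $B_k$ is totally ramified over the supersingular points. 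Total ramification at a supersingular point forces each connected component of $C_a$ to contain the unique point above it. Since $C_a\to B_k$ is finite flat onto the connected $B_k$, every component surjects onto $B_k$ and so meets that point. Hence $C_a$ is connected, and being regular it is irreducible. If the citation only provides the version without the $\mathcal P$-level, we first deduce the statement for $C_a$ using the fact that $p$-level and prime-to-$p$ level are independent. Finally, $S_k$ is the open subscheme of the irreducible curve $C_a$ obtained by removing the finitely many supersingular points. It is nonempty by Step 2, so it is irreducible and in particular connected.
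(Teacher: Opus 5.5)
Your proposal is correct and follows the paper's proof. Step 1 is the same argument: $\iota_{p^a}$ is identified with an isomorphism onto $\ker F^a$, and Cartier duality with \cite[Prop.~12.3.6]{KM1985} converts this into an Igusa generator of $\ker V^a$; the connectedness part likewise goes from connected $B_k$ to connected and regular (hence irreducible) $C_a$, then restricts to the nonempty open ordinary part. The only difference is that you unpack \cite[Cor.~12.6.2(2)]{KM1985} via total ramification over a supersingular point, where the paper simply cites it; this implicitly uses that $B_k$ has a supersingular point, which holds because $\mathcal P_k$ is surjective over $(\mathrm{Ell}/k)$.
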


\begin{proof}
We first prove \eqref{eq:classical-cover}.
In this comparison, $\mathcal A$ denotes the universal elliptic
curve restricted to $U_k$.
By \cite[Lemmas~12.2.1, 12.2.3 and 12.2.5, and Prop.~12.3.6]{KM1985},
we have an exact sequence
\[
 0\longrightarrow\ker F^a\longrightarrow\mathcal A[p^a]
 \xrightarrow{F^a}\ker V^a\longrightarrow0,
\]
whose kernels have rank $p^a$, with $\ker V^a$ \'etale and locally cyclic.
For $W\to U_k$, take the kernels after pullback to $\mathcal A_W/W$.
Any homomorphism $\mu_{p^a,W}\to\ker V^a$ is zero:
it vanishes on geometric fibres, and its vanishing locus is open and
closed by \cite[Lemma~4.10.0(3)]{KM1985}.
Thus an inclusion $\iota_{p^a}$ factors through $\ker F^a$ and is an
isomorphism onto it, since both have rank $p^a$.
Cartier duality, using the canonical principal polarizations,
identifies $(\ker F^a)^D$ with $\ker V^a$
\cite[(2.8.2.1)]{KM1985} and gives natural bijections
\[
 \begin{aligned}
 \Incl(\mu_{p^a,W},\mathcal A_W[p^a])
 &\simeq\Isom(\mu_{p^a,W},\ker F^a)\\
 &\simeq\Isom((\Z/p^a\Z)_W,\ker V^a).
 \end{aligned}
\]
The second bijection sends $\iota_{p^a}$ to $(\iota_{p^a}^D)^{-1}$,
which specifies the Igusa generator $Z=(\iota_{p^a}^D)^{-1}(1)$.
The constructions commute with base change and retain $(P,Q)$,
proving \eqref{eq:classical-cover}.

We now prove that $S_k$ is nonempty and connected. Work over the whole
$B_k$, including its supersingular points. The moduli problem
$\mathcal P_k$ is representable, finite \'etale and surjective over
$(\mathrm{Ell}/k)$: a basis exists \'etale locally, and rescaling one
vector adjusts its pairing to $\bar\zeta$.
The smooth compactification of $B_k$ is connected, hence irreducible, by
\cite[Cor.~10.13.12]{KM1985}. Therefore $B_k$ is connected.
Since $B_k=\mathfrak M(\mathcal P_k)$ is connected,
\cite[Cor.~12.6.2(2)]{KM1985} shows that $C_a$ is connected. By
\cite[Cor.~12.6.2(1)]{KM1985}, $C_a$ is a smooth curve, so it is also
irreducible.
The ordinary open $U_k$ is nonempty, since the supersingular
$j$-values form a finite set \cite[Lemma~12.5.4]{KM1985}.
As $C_a\to B_k$ is finite flat of positive degree,
$C_a\times_{B_k}U_k$ is a nonempty open of this irreducible curve
and is therefore connected. Now \eqref{eq:classical-cover} proves
that $S_k$ is nonempty and connected.
\end{proof}

\subsection{\texorpdfstring{Proof that $c=1$}{Proof that c=1}}
\label{subsec:local-torsion}
We now apply the geometric result to the translation relation
\eqref{eq:inputs}. The unramified extension used to construct
$S$ will be removed before we apply the local torsion bound.

\begin{proposition}\label{prop:identity}
For every $p\mid N$ and every $\delta\in(\Z/N\Z)^\times$,
$\tau_\delta\in E^0(\Q_p)$. Thus
$E[c]\subseteq E^0(\Q_p)$.
\end{proposition}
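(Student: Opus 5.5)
The plan is to spread the translation relation \eqref{eq:inputs} over the smooth $\mathcal O_K$-scheme $S$ and use the connectedness of its geometric special fibre (Theorem~\ref{thm:geometry}) to compare components of the N\'eron model. Fix $p\mid N$ and $\delta$, and choose a lift $\widetilde\delta$ as in Construction~\ref{con:modular-map}. Let $\cE_{\mathcal O_K}$ be the N\'eron model of $E_K$ over $\mathcal O_K$. Since $K/\Q_p$ is unramified, this is the base change of $\cE$ to $\mathcal O_K$, and its identity component is $\cE^0_{\mathcal O_K}$. Consider the morphism $g=\psi_\mu\circ\jmath:S_K\to E_K$. Because $S$ is smooth over $\mathcal O_K$, the N\'eron mapping property extends $g$ to a morphism $\mathcal G:S\to\cE_{\mathcal O_K}$. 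The same holds for $g\circ\widetilde\delta_K$, which extends to $\mathcal G\circ\widetilde\delta$ because $\widetilde\delta$ is an $\mathcal O_K$-automorphism of $S$.

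By $\jmath\circ\widetilde\delta_K=\diam\delta\circ\jmath$ and \eqref{eq:inputs}, we have $g\circ\widetilde\delta_K=\psi_\mu\circ\diam\delta\circ\jmath=g+\tau_\delta$ on $S_K$. Since $\tau_\delta\in E(\Q_p)=\cE(\Z_p)$ extends to a section $\bar\tau$ of $\cE_{\mathcal O_K}$, translation by $\bar\tau$ is an automorphism of $\cE_{\mathcal O_K}$. Both $\mathcal G\circ\widetilde\delta$ and $\mathcal G+\bar\tau$ are then extensions of the same generic morphism. The scheme $S$ is flat over $\mathcal O_K$ (it is smooth), so $S_K$ is schematically dense in $S$, and $\cE_{\mathcal O_K}$ is separated. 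Hence $\mathcal G\circ\widetilde\delta=\mathcal G+\bar\tau$ on all of $S$.

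Now pass to special fibres over $k$. The special fibre $\cE_k$ is a disjoint union of translates of $\cE^0_k$, indexed by the component group $\Phi(k)$. Since $S_k$ is connected and nonempty by Theorem~\ref{thm:geometry}, $\mathcal G_k$ maps $S_k$ into a single component, say the one with class $\phi\in\Phi(k)$. Since $\widetilde\delta_k$ is an automorphism of $S_k$, the map $\mathcal G_k\circ\widetilde\delta_k$ also lands in the component $\phi$. On the other hand, $\mathcal G_k+\bar\tau_k$ lands in the component $\phi+[\bar\tau_k]$. Comparing the two descriptions at any point of $S_k$ gives $[\bar\tau_k]=0$ in $\Phi(k)$, that is, the reduction of $\tau_\delta$ lies in $\cE^0$. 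Because $\cE^0\subseteq\cE$ is open, membership of a $\Z_p$-point in $\cE^0$ is detected on the special fibre, so $\tau_\delta\in\cE^0(\Z_p)=E^0(\Q_p)$. This descends from $\mathcal O_K$ to $\Z_p$ since $\cE^0_{\mathcal O_K}=\cE^0\times\mathcal O_K$. The final statement $E[c]\subseteq E^0(\Q_p)$ follows from $\{\tau_\delta\}=E[c]$.

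The main obstacle is the extension step. The N\'eron mapping property applies to smooth $\mathcal O_K$-schemes, which $S$ is, but one must make sure the map is defined on all of $S_K$. It is: $\jmath$ lands in $\Ymu_K$, and $\psi_\mu$ is defined on all of $\Xmu$. One must also justify that the generic fibre $S_K$ is dense, so that the identity $\mathcal G\circ\widetilde\delta=\mathcal G+\bar\tau$ propagates to the special fibre; this is where flatness of $S$ is used. The remaining point, that components of $\cE_k$ are open and closed, is standard.
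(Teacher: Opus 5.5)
Your proposal is correct and follows the paper's proof essentially step for step. You extend $\psi_\mu\circ\jmath$ to $S\to\cE_{\mathcal O_K}$ by the N\'eron mapping property, propagate the relation to $\mathcal G\circ\widetilde\delta=\mathcal G+\bar\tau$ on all of $S$, and use the connectedness of $S_k$ to force the component class of $\tau_\delta$ to vanish; your flatness/schematic-density argument is just an explicit form of the uniqueness in the N\'eron mapping property that the paper invokes, and your compatibility $\cE^0_{\mathcal O_K}=\cE^0\times\mathcal O_K$ plays the role of the paper's appeal to geometric irreducibility of the identity component.
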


\begin{proof}
Let $\cE/\Z_p$ be the N\'eron model of $E$. Since $\mathcal O_K/\Z_p$ is
finite \'etale, $\cE_{\mathcal O_K}$ is the N\'eron model over $\mathcal O_K$
\cite[Section~1.2, Prop.~2(c)]{BLR1990}. Since $S$ is smooth over
$\mathcal O_K$, the N\'eron mapping property
\cite[Section~1.2, Def.~1]{BLR1990} extends
$\psi_\mu\circ\jmath:S_K\to E_K$ uniquely to a morphism
$h:S\to\cE_{\mathcal O_K}$.
The same property extends $\tau_\delta$ to a section
$\tilde\tau_\delta\in\cE_{\mathcal O_K}(\mathcal O_K)$. Let $\widetilde\delta$ be
the automorphism of $S$ from Construction~\ref{con:modular-map}. Then
\eqref{eq:inputs} implies, by uniqueness in the N\'eron mapping property,
$h\circ\widetilde\delta=h+\tilde\tau_\delta$.
\[
\begin{tikzcd}
  S_K \arrow[r, hook] \arrow[d, "\psi_\mu\circ\jmath"'] & S \arrow[d, dashed, "h"] \\
  E_K \arrow[r, hook] & \cE_{\mathcal O_K}
\end{tikzcd}
\qquad\qquad
\begin{tikzcd}
  S \arrow[r, "\widetilde\delta"] \arrow[d, "h"'] & S \arrow[d, "h"] \\
  \cE_{\mathcal O_K} \arrow[r, "{+\tilde\tau_\delta}"] & \cE_{\mathcal O_K}
\end{tikzcd}
\]

The image of the connected scheme $S_k$ (Theorem~\ref{thm:geometry}) lies
in one component of $\cE_k$. For $x\in S(k)$, the points $h(x)$ and
$h(\widetilde\delta x)$ lie in that component. Their difference is
$\tilde\tau_\delta(k)$, whose component class is therefore zero.
Hence the reduction of $\tau_\delta$ lies in the identity component of
$\cE_k$. The identity component of $\cE_{\F_p}$ is geometrically
irreducible \cite[Tag~0B7R]{Stacks}, so its base change to $k$ is the
identity component of $\cE_k$. Therefore
$\tau_\delta\in\cE^0(\Z_p)=E^0(\Q_p)$. The final assertion follows from
\eqref{eq:inputs}.
\end{proof}

\begin{lemma}\label{lem:torsion}
If an elliptic curve $A/\Q_p$ has additive reduction, then
$\#A^0(\Q_p)[p]\leq p$.
\end{lemma}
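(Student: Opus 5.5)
We bound the $p$-torsion of $A^0(\Q_p)=\cA^0(\Z_p)$, where $\cA$ denotes the N\'eron model of $A$ over $\Z_p$, by means of the reduction sequence
\[
 0\longrightarrow \widehat A(p\Z_p)\longrightarrow \cA^0(\Z_p)\longrightarrow \cA^0(\F_p)\longrightarrow 0,
\]
in which $\widehat A(p\Z_p)$ is the group of points of the formal group of a minimal Weierstrass equation with parameter $t\in p\Z_p$, that is, the kernel of reduction. Since $A$ has additive reduction, $\cA^0_{\F_p}\simeq\Ga$ (as in the proof of Lemma~\ref{lem:additive}), so $\cA^0(\F_p)\simeq\F_p$ and the $p$-torsion of the quotient has order at most $p$. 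Taking $p$-torsion in the sequence gives
\[
 \#\cA^0(\Z_p)[p]\leq\#\widehat A(p\Z_p)[p]\cdot\#\F_p[p]=p\cdot\#\widehat A(p\Z_p)[p],
\]
so it suffices to show that $\widehat A(p\Z_p)$ has no nontrivial $p$-torsion.

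For $p$ odd this is immediate. The formal logarithm converges on $p\Z_p$ and gives an isomorphism $\widehat A(p\Z_p)\simeq p\Z_p$, because $v_p(p)=1>1/(p-1)$; by Lemma~\ref{lem:additive} even $L_A$ and $L_A^{-1}$ are integral. Hence $\widehat A(p\Z_p)$ is torsion free.

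For $p=2$ the standard bound fails, and this is the main obstacle; it is also the case actually needed in the paper. Here I would use the special structure of additive reduction: by Lemma~\ref{lem:additive}, $[2]_{\widehat A}(w)=2g(w)$ with $g(w)=w+\sum_{r\ge2}g_rw^r\in\Z_2[[w]]$. A nonzero $2$-torsion point $t\in2\Z_2$ would satisfy $g(t)=0$. But for $t\neq0$ with $v(t)\ge1$,
\[
 v\Bigl(\sum_{r\ge2}g_rt^r\Bigr)\geq 2v(t)>v(t),
\]
so $v(g(t))=v(t)$ and $g(t)\neq0$. Hence $\widehat A(2\Z_2)[2]=0$. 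The same argument works uniformly for every $p$, so the odd case could instead be treated in the same way.

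Combining the two steps, $\#A^0(\Q_p)[p]\le p$. The one point I would still check is the exactness of the reduction sequence, namely that its kernel is exactly the formal group points on $p\Z_p$ (Hensel's lemma on the smooth model $\cA^0$) and that the reduction map is surjective. Only injectivity on the left matters for the counting bound, so this check is routine.
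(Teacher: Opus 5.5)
Your proof is correct, but it takes a different route from the paper's. The paper chooses a minimal Weierstrass equation with all $a_i\in p\Z_p$ and then quotes Pannekoek's structure theorem as a black box: $A^0(\Q_p)$ is isomorphic to $\Z_p$ or to $p\Z_p\times\F_p$. The bound on $p$-torsion is read off from that.

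You instead use the filtration $\widehat A(p\Z_p)\subseteq A^0(\Q_p)$, whose quotient lies in $\Ga(\F_p)=\F_p$. You then kill the $p$-torsion of the formal group with Lemma~\ref{lem:additive}. Since $[p]_{\widehat A}=p\,g$ with $g\in w+w^2\Z_p[[w]]$, you get $v(g(t))=v(t)$ for every nonzero $t\in p\Z_p$. This argument has several advantages:
\begin{itemize}
\item it is uniform in $p$;
\item it needs no special normalization at $p=2$;
\item it is self-contained, given a lemma the paper has already proved.
\end{itemize}
It also yields slightly more. Since $L_A$ and $L_A^{-1}$ are integral, $\widehat A(p\Z_p)\simeq p\Z_p$. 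Hence $A^0(\Q_p)$ is an extension of a subgroup of $\Z/p\Z$ by a copy of $\Z_p$, which recovers exactly Pannekoek's dichotomy. The citation buys the paper brevity and the full group structure in one line, at the cost of depending on an outside result and an auxiliary choice of equation.

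One small correction to your last paragraph. Your count does not need only injectivity on the left of the reduction sequence; it needs exactness in the middle. That is, the kernel of reduction on $A^0(\Q_p)$ must be exactly the group $\widehat A(p\Z_p)$, with its formal group law, via $t=-x/y$. This is standard, and you are right that surjectivity on the right is irrelevant.
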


\begin{proof}
The group $A^0(\Q_p)$ consists of the points whose reduction on a minimal
Weierstrass equation is nonsingular
\cite[App.~C, Thm.~15.1(d)]{Silverman2009}. Such an equation can be
chosen with all $a_i\in p\Z_p$ \cite[Lemma~9]{Pannekoek2012}. By
\cite[Thm.~1]{Pannekoek2012}, the group $A^0(\Q_p)$ is then isomorphic to
$\Z_p$ or to $p\Z_p\times\F_p$.
\end{proof}

\begin{proposition}\label{prop:c-one}
Let $E$ and $c$ be as in Section~\ref{sec:deduction}. Then no prime
divides $c$, that is, $c=1$. In particular $c\neq2$.
\end{proposition}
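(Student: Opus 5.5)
The argument is by contradiction, combining Proposition~\ref{prop:identity} with Lemma~\ref{lem:torsion}. Suppose a prime $p$ divides $c$. By \eqref{eq:support}, $p^2\mid N$, so $p\mid N$ and $E$ has additive reduction at $p$: its conductor exponent at $p$ is at least $2$, whereas good reduction gives exponent $0$ and multiplicative reduction gives exponent $1$. In particular the constructions of Section~\ref{subsec:moduli-objects} apply to this $p$.

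We next collect the torsion. Proposition~\ref{prop:rational-torsion}, recalled in \eqref{eq:inputs}, gives $E[c]=\{\tau_\delta\}\subseteq E(\Q)$. Since $p\mid c$, we have $E[p]\subseteq E[c]$, so all $p^2$ points of $E[p](\Qbar)$ are rational. Proposition~\ref{prop:identity} then shows that each $\tau_\delta$, and hence each point of $E[p]$, lies in $E^0(\Q_p)$. The inclusion $E(\Q)\subseteq E(\Q_p)$ is injective, so $E^0(\Q_p)[p]$ contains the $p^2$ distinct points of $E[p]$. On the other hand, applying Lemma~\ref{lem:torsion} with $A=E_{\Q_p}$, which has additive reduction, gives $\#E^0(\Q_p)[p]\leq p<p^2$. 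This contradiction shows that no prime divides $c$, so $c=1$, and in particular $c\neq2$.

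This deduction is formal once Proposition~\ref{prop:identity} and Lemma~\ref{lem:torsion} are available; all the real work lies in those two results. The one point to check is that the identity component $E^0(\Q_p)$ of Proposition~\ref{prop:identity} is the same group as $A^0(\Q_p)$ in Lemma~\ref{lem:torsion}. It is: both are the $\Z_p$-points of the identity component of the N\'eron model, that is, the points with nonsingular reduction on a minimal Weierstrass model, by \cite[App.~C, Thm.~15.1(d)]{Silverman2009} as cited in the proof of the lemma.
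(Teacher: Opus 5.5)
Your proposal is correct and follows the paper's proof of Proposition~\ref{prop:c-one} essentially step for step. You get additive reduction from $p^2\mid N$, use $E[p]\subseteq E[c]=\{\tau_\delta\}\subseteq E^0(\Q_p)$ via Proposition~\ref{prop:identity}, and contradict the bound $\#E^0(\Q_p)[p]\le p$ of Lemma~\ref{lem:torsion}. Your added check that $E^0(\Q_p)$ and $A^0(\Q_p)$ are the same group is a harmless clarification that the paper leaves implicit.
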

\begin{proof}
Suppose that a prime $p$ divides $c$.
Since $p^2\mid N$ and $N$ is the conductor of $E$,
$E$ has additive reduction at $p$
\cite[App.~C, Section~16]{Silverman2009}.
Equation~\eqref{eq:inputs} gives
$E[p]\subseteq E[c]\subseteq E(\Q)$.
Proposition~\ref{prop:identity} and Lemma~\ref{lem:torsion} then imply
\[
 p^2=\#E[p]\leq\#E^0(\Q_p)[p]\leq p,
\]
a contradiction. Therefore $c=1$.
\end{proof}

\begin{remark}[Another approach to the case $c=2$]
\label{rem:alternative-c2}
By \eqref{eq:inputs}, the map $\delta\mapsto\tau_\delta$ is a surjective
homomorphism $(\Z/N\Z)^\times/\langle-1\rangle\to E[c]$. The case $c=2$
can also be excluded by showing that this homomorphism factors through a
cyclic group. The argument uses the Shimura subgroup
$\ker\bigl(J_0(N)\to J_1(N)\bigr)$, quadratic twists and rational
$2$-isogenies to reduce to the case $N=32M$, with $M$ odd and squarefree
and reduction of type $\mathrm{III}$ at $2$. In this case $E$ has no point
of order four over an unramified extension of $\Q_2$. For each odd prime
$p\mid M$, let $w_p$ be the Atkin--Lehner involution at $p$. Comparing the
values of $\psi_\mu$ at the cusps of $\Xmu$ above $w_p(\infty)$ shows that
the factor $(\Z/p\Z)^\times$ of $(\Z/N\Z)^\times$ maps to zero. Hence
$E[2]$ is a quotient of the cyclic group
$(\Z/32\Z)^\times/\langle-1\rangle$, which is impossible. 
\end{remark}
\endgroup

\section*{Declaration of generative AI use}
The proof was initially obtained by ChatGPT-6 Pro. Subsequently, we used Codex
6-Astra, Claude Opus~5.5, and a verification framework OSVerify \cite{osv} with
Codex 5.6-Sol to review the output and improve writing. The authors verified the
output and improved the exposition. The details of the use of AI are further
disclosed in Appendix~\ref{app:ai} for transparency. The authors are responsible
for the correctness of this paper.

\appendix
% !TEX root = ../rewritten.tex
\section{Details of generative AI usage}\label{app:ai}

This appendix records how generative AI tools were used in producing this
paper, for transparency and honesty.

\subsection{Persistent prompting with math guidance: the initial proof}
Initially, the result was obtained by heavily prompting ChatGPT-6 Pro,
Claude Fable~5.1 and Codex 6-Astra.

\subsubsection{Initial attempt: $c_\varphi\mid\deg\varphi$}
On October 3rd, Ping Nie, who had not previously studied number theory,
prompted ChatGPT-6 Pro and Claude Fable~5.1 to solve the classical Manin
constant conjecture. The agents claimed to have solved the conjecture, but
the authors found that only the weaker statement $c_\varphi\mid\deg\varphi$
had been proved.

\subsubsection{Breakthrough: $c_E\mid2$}
Chi then continued prompting, building on the initial attempt:
\begin{enumerate}
    \item What are the current status and remaining cases? Has the conjecture been reduced to finitely many cases? Is the remaining work ``low-hanging fruit''?
    \item Assuming these results and techniques from Nie's GitHub repo are correct. Prove the actual conjecture.
\end{enumerate}

However, the model remained stuck on the ``Hecke and lattice'' approach, and
Chi kept prompting as issues occurred:
\begin{enumerate}
    \item Try to find a different tech to solve this conjecture because the old techs were not working by the records from the repo.
    \item Try to prove the two comparisons for all conductors.
    \item Consider global number-theoretic properties, such as torsions and rational points, rather than focusing only on local geometry. If we can prove $c_E\in\{1,2\}$, that would be substantial progress.
\end{enumerate}
Prompt (3) became the turning point, and the model produced a proof that
$c_E|2$.

Chi then asked the agent to write a Markdown file for a new agent to audit.
Two independent runs of Codex 6-Astra at \texttt{max} reasoning effort
reviewed the file and reported that it was correct.

\subsubsection{Finally: $c_E=1$}

Chi further pushed the agent by prompting
\begin{enumerate}
    \item So short? Are you sure the argument is detailed enough?
    \item Try to prove that the constant divides 1.
    \item If you cannot find an example, try to prove that none exists.
    \item If the proof requires finiteness of the Tate--Shafarevich group, abandon that approach. Continue trying to prove the full conjecture.
\end{enumerate}
After these rounds of prompts, ChatGPT-6 Pro finally proved $c_E=1$.

\subsection{Non-math prompting with feasibility hint: a second proof}
After Nie was informed that Chi had used AI to generate a 22-page proof but did not see the proof, he restarted a fresh Codex 6-Astra session at \texttt{max} reasoning effort
from the Git state in which only $c_\varphi\mid\deg\varphi$ had been proved,
and prompted:
\begin{quote}
``It was the wrong route and you only proved a weaker result. You should
attack the actual Manin conjecutre. Summarize and learn from the past
failures. It is known that there is a 22-page proof with a novel new idea.''
\end{quote}
The agents then kept working and provided a cleaner argument. Nie asked his
agents to proofread the output carefully and also to search for other
conjectures that could be resolved using the same method. This search led to
the proof of Stevens' conjectures (Theorem~\ref{thm:stevens}). The combined output
became the backbone of the current paper.

\subsection{Agentic AI reviewer}

Subsequently, Chi ran 12 independent checks of the output from Nie's agents
using Codex 6-Astra and Claude Opus~5.5. All 12 checks passed and reported no
mistakes.

However, Liu checked the output with OSVerify \cite{osv} running on
GPT-5.6-Sol at \texttt{xhigh} reasoning effort, which discovered one mistake
(the claim that $L_E\in\Q[[w]]$ for every $E/\Q_p$ is wrong) and suggested a
fix for the three places affected by it. Liu then used OSVerify with
GPT-6-Astra at \texttt{max} reasoning effort and found no further issues.

The authors then used Codex-6-Astra and Claude-Opus-5.5 to improve the writing and to find relevant literature and previous work.

\subsection{Agentic systems: one-shot reproduction of the proof}

Liu used the automated mathematical reasoning agent Rethlas \cite{Rethlas}
(running on GPT-6-Astra at \texttt{max} reasoning effort), enhanced by the
verification framework OSVerify \cite{osv} (running on GPT-5.6-Sol at
\texttt{xhigh} reasoning effort), to attack the Manin constant conjecture.
Only the statement of the conjecture was provided. The system automatically
reproduced essentially the same proof as the one Nie obtained.

\subsection{Where the idea came from}

Probably the most important idea in the proof is to prove modularity of the
normalized lift using \cite{CDT}. When the authors ask Codex-6-Astra to review the literature, list all previous study and check novelty, the agents reported \cite[Proposition 8.1]{Yin2026} as ``the closest earlier use of the method''.

However, as Nie and Liu independently investigated the tool calls and websearch histroy, they both found that the agents did not search or read \cite{Yin2026} before obtaining the solution. One of the miracle moment was that the agent decided to search ``\texttt{Manin+unbounded denominators}''.

Liu also used the mathematics reasoning agent \cite{Rethlas} and reproduced the proof. This agent was, by design, required to download everything that it read from the Internet. Liu used Claude Code Fable-5.1 to monitor the process and report every websearch and download attempt. The memory and log files of Rethlas, together with the report by Claude Code, did not mention \cite{Yin2026}.

This may be indications that AI could come up with this novel idea independently. However,
\begin{enumerate}
    \item The models may have learnt \cite{Yin2026} before the inference time. There is no solid evidence to exclude this possibilty.
    \item Yiyang Wang independently prompted Codex-6-Astra to ask questions about \cite{Yin2026}, and then a single prompt ``/goal prove manin constant c|2'' reproduced the proof quickly. This indicates Yin's method is an extremely strong hint to the model.
\end{enumerate}

% !TEX root = ../rewritten.tex

\end{document}